\newcommand{\bd}{\partial}
\newcommand{\set}[1]{\{ #1 \}}
\newcommand{\rmv}{\smallsetminus}
\newcommand{\wt}[1]{\widetilde{#1}}
\newcommand{\wh}[1]{\widehat{#1}}
\theoremstyle{plain}
\newtheorem{lem}{Lemma}
\newtheorem{thm}{Theorem}
\newtheorem{cor}{Corollary}
\newtheorem*{convention}{Conventions}
\theoremstyle{remark}
\newtheorem*{example}{Example}
\newtheorem{rem}{Remark}
\theoremstyle{definition}
\newtheorem*{definition}{Definition}
\newtheorem*{acknowledgements}{Acknowledgements}
\title{Band~surgeries~and~crossing~changes between~fibered~links}
\author{Dorothy Buck}
\address{Department of Mathematics, Imperial College London, South Kensington, London, England SW7 2AZ}
\email{d.buck@imperial.ac.uk}
\urladdr{www2.imperial.ac.uk/~dbuck/}
\author{Kai Ishihara}
\address{Department of Mathematics and Information Sciences, Yamaguchi University, 1677-1 Yoshida, Yamaguchi-shi Yamaguchi 753-8513, Japan}
\email{kisihara@yamaguchi-u.ac.jp}
\author{Matt Rathbun}
\address{Department of Mathematics, Imperial College London, South Kensington, London, England SW7 2AZ}
\email{m.rathbun@imperial.ac.uk}
\urladdr{www2.imperial.ac.uk/~mrathbun/}
\author{Koya Shimokawa}
\address{Department of Mathematics, Saitama University, 255 Shimo-Okubo, Sakura-ku, Saitama, 380-8570, Japan}
\email{kshimoka@rimath.saitama-u.ac.jp}
\urladdr{http://www.rimath.saitama-u.ac.jp/lab.en/kshimoka/index-e.html}
\begin{document}
\maketitle
\begin{center}
\today
\end{center}

\begin{abstract}
We characterize cutting arcs on fiber surfaces that produce new fiber surfaces, and the changes in monodromy resulting from such cuts. As a corollary, we characterize band surgeries between fibered links and introduce an operation called Generalized Hopf banding. We further characterize generalized crossing changes between fibered links, and the resulting changes in monodromy.
\end{abstract}

\section{Introduction}

A \emph{fibered link} is one whose exterior fibers over $S^1$, so that each fiber is a Seifert surface for the link.
Among the many fascinating properties of fibered links is the ability to express their exteriors as a mapping torus, thereby
allowing us to encode the 3-dimensional information about the link exterior in terms of a surface automorphism. We will refer to this automorphism as the \emph{monodromy} of the link, or of the surface.
This connection yields generous amounts of information, including geometric classification (e.g. the link exterior is hyperbolic if and only if
the surface automorphism is pseudo-Anosov \cite{ThuGDDS}), topological
information (e.g. the fiber surface is the unique minimal genus Seifert
surface \cite{BurZieNKF}),
and methods to de-construct/re-construct fibered links \cite{HarHCAFKL}, \cite{StaCFKL}.

In addition to providing beautiful examples and visualizations of link exteriors, fibered links are deeply connected with important areas of topology, including the Berge Conjecture \cite{KirPLDT,OzsSzaKFHLSS,NiKFHDFK}, as well as contact geometry due to Giroux's correspondence \cite{GirCTC} between
open books and contact structures on 3-manifolds. 

In this paper we further explore constructions of fibered links in terms of the monodromy. We will generalize a very well-understood and important operation on fiber surfaces known as Hopf plumbing (or its inverse, Hopf de-plumbing): 
If a fiber surface has a Hopf plumbing summand,
then
cutting along the spanning arc of the Hopf annulus
results in another fiber surface,
and this process is called Hopf de-plumbing.
It is known, for instance, that any fiber surface of a fibered link in $S^3$ can be constructed from a disk by a sequence of Hopf plumbings and Hopf de-plumbings \cite{GirGooOSEOB3M}.
Such an arc corresponding to a Hopf plumbing can be characterized in terms of the monodromy.
We will characterize all arcs on a fiber surface cutting along
which gives another fiber surface.
This will lead naturally to the construction of a \emph{generalized Hopf banding},
and we will leverage our results to relate to 
two other crucially important operations: band surgeries, and generalized crossing changes.
We will complete the characterization of band surgeries between fibered links,
and (generalized) crossing changes between fibered links.

The paper is organized as follows: In Section \ref{section:Definitions}, we provide definitions and background for the tools we will use.
In Section \ref{section:cuttingarcs} we study the result of cutting a fiber along an arc, and prove:

\begin{thm} \label{thm:whenfiber} 
Let $L$ be 
a fibered (oriented) link
with 
fiber $F$, 
monodromy $h$ (which is assumed to be the identity on $\bd F$), and suppose $\alpha$ is a properly embedded arc in $F$.
Let $F'$ be the surface obtained by cutting $F$ along $\alpha$, and the resulting (oriented) link $L' = \bd F'$.
The surface $F'$ is a fiber for
$L'$ if and only if $i_{total}(\alpha) = 1$ (that is, when $\alpha$ is clean and alternating, or once-unclean and non-alternating).
\end{thm}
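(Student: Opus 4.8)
The plan is to reduce the statement to a question about the complementary sutured manifold and then analyze that manifold through the product structure coming from the original fibration. Recall that $F'$ is a fiber surface for $L'$ if and only if the manifold obtained by cutting $S^3 \smallsetminus N(L')$ along $F'$ is a product sutured manifold $F' \times I$; this is the standard reformulation of ``fibered,'' and it is what I would take as the working criterion throughout. The hypothesis that $F$ is a fiber gives us the analogous statement for $L$: the manifold $M$ obtained by cutting $S^3 \smallsetminus N(L)$ along $F$ is a product $F \times I$, with the two copies $F \times \set{0}$ and $F \times \set{1}$ reglued by the monodromy $h$.

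Next I would set up a local model near $\alpha$ describing how the band surgery carrying $L$ to $L'$ (namely, removing the neighborhood $b = N(\alpha) \subset F$ of the arc and reading off the new boundary) modifies the complement. The key observation I would try to establish is that the manifold obtained by cutting $S^3 \smallsetminus N(L')$ along $F'$ can be built from $F \times I$ by cutting along the product disk $\alpha \times I$ and regluing according to $h$, so that the regluing datum is dictated entirely by the position of $h(\alpha)$ relative to $\alpha$. In this model the behavior at the two endpoints of $\alpha$ (whether it is clean or unclean there) records the boundary and suture identifications, while the interior crossings (whether it is alternating) record whether the resulting gluing is compatible with a product. This is precisely the data packaged by $i_{total}(\alpha)$.

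For the ``if'' direction I would show that when $i_{total}(\alpha) = 1$ the reglued manifold is again a product $F' \times I$, by exhibiting the product structure explicitly and, along the way, identifying the new monodromy $h'$ as a modification of $h$ supported near $\alpha$ — generalizing Hopf de-plumbing, where $h' = h \circ \tau^{\pm 1}$ for the Dehn twist $\tau$ along the core of the Hopf annulus. Verifying that the endpoint and interior contributions summing to $1$ are exactly what makes the two cut faces close up consistently with a product structure is the computational heart of this direction.

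The ``only if'' direction is where I expect the main obstacle. Here one must show that whenever $i_{total}(\alpha) \neq 1$ the complementary manifold genuinely fails to be a product — not merely that the particular regluing above breaks down. I would attempt this by locating an obstruction inside the cut-open manifold (for instance an essential annulus or product disk that cannot be removed, or a failure of the $I$-bundle structure), or alternatively by an Euler-characteristic and tautness argument showing that $F'$ cannot be the (unique, minimal-genus) fiber. The delicate point is to rule out the possibility that $L'$ fibers with fiber $F'$ through some structure unrelated to the ambient product coming from $h$, so the argument must certify the negative outcome rather than simply record the collapse of one construction.
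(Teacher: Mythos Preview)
Your framework is sound: reducing to whether the complementary sutured manifold of $F'$ is a product is exactly the right move, and the paper proceeds the same way. The ``if'' direction can indeed be completed along the lines you sketch --- the paper does it by recognizing a canceling $1$-handle/$2$-handle pair, which is the precise version of your ``the two cut faces close up consistently with a product structure.'' (One terminological slip: clean/unclean refers to \emph{interior} intersections $\rho(\alpha)$, while alternating/non-alternating refers to the \emph{boundary} behavior $i_\partial(\alpha)$; you have these swapped.)

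The genuine gap is in the ``only if'' direction, and it is not a minor one. You correctly diagnose the difficulty --- one must certify that the complementary manifold is \emph{not} a product, not merely that a particular construction fails --- but none of the strategies you float (finding an essential annulus, Euler characteristic, tautness of $F'$) will do this. Tautness and Euler characteristic cannot distinguish a product from a non-product sutured manifold of the same topological type, and there is no evident essential annulus to locate. The paper's direct proof supplies the missing ingredient: it builds the complementary manifold of $F'$ as $(F' \times I)$ with a $1$-handle and then a $2$-handle attached (the $2$-handle attaching curve encoding the pattern of $h(\alpha)$ relative to $\alpha$), and then invokes Wu's handle-addition lemma on $n$-compressing disks to show that if the result is a product then there must be a compressing disk meeting the $2$-handle attaching curve exactly once. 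Combined with the uniqueness of the co-core of the $1$-handle, this forces $i_{total}(\alpha)=1$. The paper also gives an alternative route via Ni's classification of Dehn surgeries on knots in $F\times I$ that preserve the product structure. Either way, a substantive external input of this kind is needed; without it your ``only if'' argument does not close.
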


 \noindent We also characterize the resulting changes in monodromy, see Corollary \ref{cor:monodromy-band}. (See Section \ref{section:Definitions} for the definition of $i_{total}(\alpha)$.)
\smallskip

By \cite{SchaThoLGCM,HirShiDSSIKYLS,DerMiyMotNSSKIII}, it is known that if a \emph{coherent band surgery} increases the Euler
characteristic of a link,
then the band can be isotoped onto a taut Seifert surface.
Hence, such a band surgery between fibered links
corresponds to cutting the fiber surface.
When a coherent band surgery changes the Euler characteristic of a link
by at least two,
such a band surgery is characterized by
Kobayashi \cite{KobFLUO}
(see Theorems \ref{thm:4equiv} and \ref{thm:Kobayashi-band}).
In Section \ref{section:bandsurgeries}, we introduce \emph{generalized Hopf
banding} and give a characterization of the remaining case.

\begin{thm}\label{thm:fiberedlinks}
Suppose $L$ and $L'$ are links in $S^3$, and $L'$ is obtained from $L$ by a coherent band surgery
and $\chi(L')=\chi(L)+1$.

{\rm (1)}  Suppose $L$ is a fibered link. Then $L'$ is a fibered link if and only if 
the fiber $F$ for $L$ is a generalized Hopf banding of a Seifert surface $F'$ for $L'$ along $b$. 

{\rm (2)} Suppose $L'$ is a fibered link. Then $L$ is a fibered link if and only if 
a Seifert surface $F$ for $L$ is a generalized Hopf banding of the fiber $F'$ for $L'$ along $b$. 
\end{thm}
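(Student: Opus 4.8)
The plan is to reduce both parts of Theorem~\ref{thm:fiberedlinks} to Theorem~\ref{thm:whenfiber}, exploiting the dictionary between a $\chi$-increasing coherent band surgery and the operation of cutting a Seifert surface along the co-core arc of the band (attaching the band being the inverse banding operation). First I would invoke the results of \cite{SchaThoLGCM,HirShiDSSIKYLS,DerMiyMotNSSKIII}: because the surgery increases the Euler characteristic by one, the band $b$ may be isotoped to lie on a taut (maximal Euler characteristic) Seifert surface for the link of smaller Euler characteristic, namely $L$. When that link is fibered, its taut Seifert surface is unique and equals the fiber \cite{BurZieNKF}, so I may assume $b$ lies on the fiber; performing the surgery is then realized by cutting the fiber along the properly embedded arc $\alpha$ dual to $b$. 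By the definition in Section~\ref{section:bandsurgeries}, the banding is a \emph{generalized Hopf banding} precisely when $i_{total}(\alpha)=1$.

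For part (1), assume throughout that $L$ is fibered with fiber $F$. For the forward implication, suppose in addition that $L'$ is fibered. Placing $b$ on $F$ as above and cutting along the co-core $\alpha$ yields a Seifert surface $F'$ for $L'=\partial F'$ with $\chi(F')=\chi(F)+1=\chi(L')$; hence $F'$ is taut for $L'$, and since $L'$ is fibered it must be its fiber by uniqueness of minimal-genus Seifert surfaces. Theorem~\ref{thm:whenfiber} then forces $i_{total}(\alpha)=1$, exhibiting $F$ as a generalized Hopf banding of $F'$ along $b$. The converse is immediate from Theorem~\ref{thm:whenfiber}: if $F$ is a generalized Hopf banding of a Seifert surface $F'$ along $b$, then by definition $F'$ arises from cutting $F$ along an arc with $i_{total}=1$, so $F'$ is a fiber for $L'=\partial F'$ and $L'$ is fibered.

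Part (2) is the mirror situation, obtained by interchanging the roles of $L$ and $L'$ and reading $b$ in the banding direction (using that the reverse of a coherent band surgery is again a coherent band surgery). Here $L'$ is fibered with fiber $F'$ throughout. For the forward implication, assume $L$ is fibered with fiber $F$; placing $b$ on $F$ and cutting along its co-core gives a taut Seifert surface for $L'$, which by uniqueness is $F'$, and Theorem~\ref{thm:whenfiber} again yields $i_{total}(\alpha)=1$, so the fiber $F$ of $L$ is a generalized Hopf banding of $F'$. For the converse, if some Seifert surface $F$ for $L$ is a generalized Hopf banding of the fiber $F'$, then the explicit monodromy computation of Corollary~\ref{cor:monodromy-band} endows $F$ with an open-book structure built from that of $F'$, so $F$ is itself a fiber and $L$ is fibered.

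The hard part will be the bookkeeping that guarantees the cut surface is genuinely the fiber rather than merely a Seifert surface: this requires checking that $\alpha$ is non-separating so that the cut surface is connected (which I expect follows from $i_{total}(\alpha)=1$) and that its Euler characteristic is maximal, and only then invoking uniqueness of the minimal-genus Seifert surface of a fibered link. The other delicate point is the converse in part (2), where I must be sure that the definition of generalized Hopf banding certifies fiberedness of $F$ from that of $F'$, and not merely the arc condition; this is exactly where the constructive monodromy description of Corollary~\ref{cor:monodromy-band}, rather than Theorem~\ref{thm:whenfiber} read backwards, is needed.
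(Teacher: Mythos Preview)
Your reduction to Theorem~\ref{thm:whenfiber} via Theorem~\ref{thm:Euler} and the dictionary between generalized Hopf banding and the condition $i_{total}(\alpha)=1$ is exactly how the paper proceeds (packaged there as Theorem~\ref{thm:FF'fiber}), and it handles part~(1) and the forward implication of part~(2). There is, however, a genuine gap in your converse for part~(2).

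First a point of precision that matters here: the condition $i_{total}(\alpha)=1$ is \emph{not} the definition of a generalized Hopf banding. The definition is purely geometric---a once-overlapped band along a self-intersecting arc $\ell$ in $F'$---and the equivalence with $i_{total}(\alpha)=1$ is the content of Theorems~\ref{thm:Hopfband} and~\ref{thm:gHopfband}, both of which \emph{assume} $F$ is a fiber (so that a monodromy $h$ exists with which to compute $i_{total}$). In the converse of part~(2) you only know that $F'$ is a fiber and that $F$ is, geometrically, a generalized Hopf banding of $F'$; you cannot yet speak of $i_{total}(\alpha)$, so Theorem~\ref{thm:whenfiber} is unavailable.

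Your proposed fix, Corollary~\ref{cor:monodromy-band}, does not close the gap: that corollary computes the monodromy $h'$ of $F'$ from the monodromy $h$ of $F$, and its hypothesis (``$F$ and $F'$ related as above'') already presupposes that $F$ is a fiber. Inverting the formula to manufacture a candidate $h$ on $F$ does not show that the mapping torus of $(F,h)$ is the exterior of $L$; verifying that is exactly the missing statement. The paper instead supplies Theorem~\ref{thm:gHopfbandfiber} with a direct sutured-manifold argument: from the geometric generalized-Hopf-banding data one builds the explicit arc $\beta\subset F$ (constructed in the proof of Theorem~\ref{thm:gHopfband}) so that $\alpha\cup\beta$ bounds a disk in the complement of $F$. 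This disk is a product disk for $(\overline{M\smallsetminus n(F)},\partial F)$ meeting the co-core of the $1$-handle $n(b)$ exactly once, hence it cancels the handle and the product decomposition yields $(\overline{M\smallsetminus n(F')},\partial F')$, which is trivial because $F'$ is a fiber. That handle-cancellation step---not a monodromy formula---is what certifies that $F$ is a fiber, and it is the ingredient your outline lacks.
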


As an application, we characterize band surgeries on torus links $T(2,p)$,
or connected sums of those, that produce fibered links
(Corollaries \ref{cor:2torus} and \ref{cor:consumtorus}). In the forthcoming paper \cite{BucIshRatShiII}, 
we use this to completely characterize an important biological operation: the unlinking of DNA molecules by a recombinase system.

In Section \ref{section:loop}, we consider \emph{arc-loops}, which are loops around arcs, 
and characterize Dehn surgeries along arc-loops preserving a fiber surface, 
using results of Ni \cite{NiDSKPM} about surgeries on knots in trivial sutured manifolds.
\begin{thm}\label{thm:surgery}
Suppose $F$ is a fiber surface in a $3$-manifold $M$ and $c$ is an $\alpha$-loop $($a loop around an arc $\alpha)$. 
Suppose that $\gamma$ is a non-trivial slope on $c$, and that $N(\gamma)$ is the manifold obtained from $M$ via the $\gamma$-surgery on $c$. 
Then $F$ is a fiber surface in $N(\gamma)$ if and only if 
\begin{enumerate}[\rm (1)]
\item $\alpha$ is clean and $\gamma=i_{\partial}(\alpha)+\frac{1}{n}$  for some integer $n$, or
\item $\alpha$ is once-unclean and $\gamma=i_{\partial}(\alpha)$.
\end{enumerate}
\end{thm}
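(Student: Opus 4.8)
The plan is to translate the assertion ``$F$ is a fiber surface'' into the language of sutured manifolds and then to quote Ni's characterization of product-preserving surgeries \cite{NiDSKPM}. Recall that a surface $F$ is a fiber of $M$ exactly when the result of cutting $M$ along $F$ is a product sutured manifold $F\times I$, with the two copies $F\times\{0\}$ and $F\times\{1\}$ glued back by the monodromy. So I would begin by cutting $M$ along $F$ to obtain the product sutured manifold $(F\times I,\Gamma)$, and observe that the arc-loop $c$, being a loop around $\alpha$ disjoint from $F$, descends to a knot in this product. Since the surgery on $c$ is supported away from $F$, the surface $F$ survives as a surface in $N(\gamma)$, and the question ``is $F$ a fiber in $N(\gamma)$?'' becomes the question ``is the surgered sutured manifold $(F\times I)_c(\gamma)$ again a product?''

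Next I would pin down the isotopy class of $c$ inside $F\times I$, since this is what Ni's theorem is sensitive to, and I expect the clean/once-unclean dichotomy to govern it. When $\alpha$ is clean, the loop $c$ should be isotopic into a single horizontal fiber $F\times\{t\}$; when $\alpha$ is once-unclean, $c$ should meet each fiber essentially once. I would establish this by examining how $\alpha$ sits relative to its monodromy image, using the same interaction that underlies $i_{total}(\alpha)$ in Theorem \ref{thm:whenfiber}. In parallel I would carry out the framing bookkeeping, identifying the slope $i_{\partial}(\alpha)$ with the framing that the bigon bounded by $c$ induces on $c$; the upshot should be that $\gamma=i_{\partial}(\alpha)$-surgery realizes precisely the operation of cutting $F$ along $\alpha$, thereby tying the present statement to Theorem \ref{thm:whenfiber}.

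With $c$ and its framing normalized, I would apply Ni's result \cite{NiDSKPM}, which constrains both the position of a knot in a product sutured manifold admitting a product-yielding surgery and the slopes for which the product is preserved. In the clean case, $c$ lying in a fiber means the product persists for an entire one-parameter family of twisting surgeries; in the meridian coordinates on $c$ these are exactly $\gamma=i_{\partial}(\alpha)+\tfrac{1}{n}$ with $n\in\mathbb{Z}$, since each such surgery merely alters the monodromy by a power of a Dehn twist and leaves $F$ fibered. In the once-unclean case, $c$ meeting the fibers once forces, by Ni's rigidity, a single product-yielding slope, which the framing computation identifies as $\gamma=i_{\partial}(\alpha)$. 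For the ``if'' direction I would exhibit the new product structure (equivalently, the new monodromy) directly for these slopes, while the ``only if'' direction is precisely the rigidity supplied by Ni.

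The main obstacle I anticipate is getting the two bookkeeping steps to agree: first, rigorously determining the isotopy class of $c$ in $F\times I$ from the clean/once-unclean property (including verifying that $c$ is essential, so that $\gamma$ is genuinely non-trivial), and second, matching the abstract product-preserving slopes emerging from Ni's theorem with the concrete values $i_{\partial}(\alpha)$ and $i_{\partial}(\alpha)+\tfrac{1}{n}$ asserted here. I would also need to check that the hypotheses of Ni's theorem genuinely hold in this setting---tautness and the relevant minimality of the sutured manifold, together with the standing assumption that $\gamma$ is a non-trivial slope---so that his characterization applies and furnishes the crucial ``only if'' direction.
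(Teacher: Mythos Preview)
Your overall strategy matches the paper's: pass to the complementary sutured manifold $F\times I$, apply Ni's theorem \cite{NiDSKPM}, and convert the resulting slope into the preferred-longitude coordinates via $\lambda_b=\lambda+i_\partial(\alpha)\cdot\mu$. Two specific points in your sketch need correction.

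First, your description of the once-unclean case as ``$c$ should meet each fiber essentially once'' is not what Ni's theorem detects. Ni's statement is about the minimum number of \emph{crossings in the projection} $p(c)\subset F$, not about algebraic intersection with the fibers. The arc-loop $c$ sits in a natural \emph{1-bridge position}: one horizontal arc is (a push-off of) $\alpha$, the other is $h(\alpha)$, joined by two vertical arcs, so the projection is $\alpha\cup h(\alpha)$ with $\rho(\alpha)$ interior crossings. The non-obvious step, which the paper isolates as Lemmas~\ref{lem:1BridgeFxI}--\ref{lem:1BridgeArcLoop}, is that this crossing number is actually the minimum over \emph{all} positions of $c$: one must show that any two 1-bridge positions of the same loop in $F\times I$ (or in a fibered link exterior, after possibly composing with a power of the monodromy) are related by a 1-bridge isotopy. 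This is exactly the obstacle you anticipate, but ``examining how $\alpha$ sits relative to its monodromy image'' does not yet address why no other isotopy of $c$ could achieve fewer crossings; that requires the fundamental-group argument in Lemma~\ref{lem:1BridgeFxI}.

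Second, the side remark that ``$\gamma=i_\partial(\alpha)$-surgery realizes precisely the operation of cutting $F$ along $\alpha$'' is off by the framing shift: it is $\gamma=0$ (preferred-longitude) surgery that corresponds to cutting, as in Lemma~\ref{lem:FandF'}. The quantity $i_\partial(\alpha)$ is the difference between the blackboard framing $\lambda_b$ and the preferred longitude $\lambda$, not the cutting slope itself. This does not affect the proof of the present theorem, but it does scramble the link you draw to Theorem~\ref{thm:whenfiber}.
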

\noindent (See Section \ref{section:Definitions} for the definition of $i_{\bd}(\alpha)$.)
\smallskip

By \cite{SchaThoLGCM, KobFLUO}, crossing changes between fibered links with different Euler characteristics are understood. In Section \ref{section:crossingchanges}, we investigate the remaining case and characterize when fibered links of the same Euler characteristic are related by crossing changes and generalized crossing changes.

\begin{thm}\label{thm:crossingchange} 
Suppose a link $L'$ is obtained from a fibered link $L$ in $S^3$ 
with fiber $F$ by a crossing change, and $\chi(L')=\chi(L)$. 
Then $L'$ is a fibered link if and only if the crossing change is a Stallings twist or an $\varepsilon$-twist along an arc $\alpha$ in $F$, 
where $\alpha$ is once-unclean and alternating with $i_{\bd}(\alpha)=-\varepsilon$.
\end{thm}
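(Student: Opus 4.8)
The plan is to reduce the crossing-change statement to the cutting-arc results already established. A generalized crossing change is performed by doing $\frac{1}{q}$-surgery along an unknotted circle $c$ bounding a disk that the link punctures in some coherent way; when $c$ bounds a disk meeting $L$ in two points of opposite sign (an arc-loop around a spanning arc $\alpha$ in the fiber $F$), the surgery is exactly a crossing change. So I would first identify the circle $c$ realizing the crossing change as an $\alpha$-loop for a suitable properly embedded arc $\alpha \subset F$, and interpret the crossing change as a Dehn surgery on $c$ with a specific slope $\gamma$. This lets me appeal directly to Theorem~\ref{thm:surgery}: $F$ remains a fiber surface after the surgery precisely when either $\alpha$ is clean and $\gamma = i_\partial(\alpha) + \frac1n$, or $\alpha$ is once-unclean and $\gamma = i_\partial(\alpha)$.

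\smallskip
\noindent The next step is to pin down which slopes $\gamma$ on $c$ correspond to honest crossing changes (as opposed to arbitrary generalized crossing changes), and to match the Euler-characteristic hypothesis $\chi(L') = \chi(L)$ against the two cases of Theorem~\ref{thm:surgery}. A single crossing change corresponds to the surgery slope that differs from the trivial slope by $\pm 1$, so I would translate the conditions $\gamma = i_\partial(\alpha) + \frac1n$ and $\gamma = i_\partial(\alpha)$ into statements about $i_\partial(\alpha)$ and the integer framing. The clean case with $\frac1n$-family is where the \emph{Stallings twist} phenomenon lives: a clean arc whose loop admits a full-twist surgery preserving the fiber is exactly a Stallings twist, and a crossing change realized this way forces $n = \pm 1$, i.e. a $\pm 1$-framed surgery, giving the Stallings-twist branch. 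The once-unclean case, where the only fiber-preserving slope is $\gamma = i_\partial(\alpha)$, is where the $\varepsilon$-twist branch lives, and I expect the sign bookkeeping to force $i_\partial(\alpha) = -\varepsilon$ and $\alpha$ to be alternating.

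\smallskip
\noindent Concretely I would argue both directions. For the ``only if'' direction, assume $L'$ is fibered; since $\chi$ is unchanged and the crossing change is realized by surgery on the $\alpha$-loop $c$, Theorem~\ref{thm:surgery} applies and returns one of the two cases, which I then refine using the crossing-change constraint on the slope to conclude that the move is either a Stallings twist (clean $\alpha$) or an $\varepsilon$-twist along a once-unclean, alternating $\alpha$ with $i_\partial(\alpha) = -\varepsilon$. For the ``if'' direction, I would start from such an $\alpha$, observe that the prescribed twist is exactly a fiber-preserving surgery slope on $c$ by Theorem~\ref{thm:surgery}, and conclude that the resulting $L'$ is fibered with the expected fiber (and monodromy). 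Compatibility of orientations and the coherence of the crossing change with the Seifert framing need care throughout, since $i_\partial(\alpha)$ is defined relative to the boundary framing of $F$.

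\smallskip
\noindent The main obstacle I anticipate is the correct normalization and sign matching between the crossing-change slope and the arithmetic of $i_\partial(\alpha)$: I must verify that ``crossing change'' pins the surgery to the $\pm 1$ offset and that the alternating/non-alternating dichotomy of $\alpha$ lines up with the two cases of Theorem~\ref{thm:surgery} in a way that makes the clean case yield precisely Stallings twists and the once-unclean case yield precisely $\varepsilon$-twists with $i_\partial(\alpha) = -\varepsilon$. A secondary subtlety is ruling out that a crossing change could fall into the clean $\frac1n$-family with $|n| \ge 2$ (a genuine generalized crossing change rather than a single one) while still preserving fiberedness — this is exactly the distinction the theorem is drawing, and keeping it clean requires care about what ``crossing change'' means versus ``generalized crossing change.''
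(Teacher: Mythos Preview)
Your approach is correct and is essentially the paper's proof of Theorem~\ref{thm:gencrossingchange} specialized to $n=\pm1$; but it is \emph{not} the route the paper takes for Theorem~\ref{thm:crossingchange} itself. The paper instead proves an intermediate result, Theorem~\ref{thm:twistsurface}, by a Hopf-plumbing trick: plumb a Hopf annulus onto $F$ near one endpoint of $\alpha$ to obtain a new fiber $F''$, observe that cutting $F''$ along $\alpha$ yields exactly the twisted surface $F'$, and then invoke Theorem~\ref{thm:whenfiber} (the cutting-arc characterization) for $\alpha$ in $F''$. Translating the clean/alternating and once-unclean/non-alternating conditions on $\alpha$ in $F''$ back to $F$ gives exactly the two cases of the statement. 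Theorem~\ref{thm:crossingchange} then follows by Scharlemann--Thompson (Theorem~\ref{thm:Eulerc}), which puts the crossing disk onto the fiber. Your route via Theorem~\ref{thm:surgery} is cleaner arithmetically and gives the sign $i_\partial(\alpha)=-\varepsilon$ directly from the slope equation $-\varepsilon = i_\partial(\alpha)$ in the once-unclean case; the paper's route is more self-contained (it avoids Ni's theorem and the 1-bridge lemmas) and explains conceptually why the twist reduces to a cut on a nearby fiber.

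Two small points to tighten in your write-up. First, the step ``identify $c$ as an $\alpha$-loop'' is not automatic: it is precisely the content of Theorem~\ref{thm:Eulerc}, and you should cite it. Second, you need the implication ``$L'$ fibered $\Rightarrow$ $F$ is the fiber for $L'$,'' which follows because $\chi(L')=\chi(L)=\chi(F)$ forces $F$ to be taut for $L'$, and a fibered link has a unique taut Seifert surface. Your worry about the $\tfrac1n$-family with $|n|\ge 2$ is not an issue here: for an ordinary crossing change the surgery slope $\gamma=-\varepsilon\in\{\pm1\}$ is fixed, and a quick check of $\gamma=i_\partial(\alpha)+\tfrac1m$ with $i_\partial(\alpha)\in\{-1,0,1\}$ shows that only $i_\partial(\alpha)=0$ (non-alternating, hence a Stallings twist) admits an integer solution $m$.
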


\begin{thm}\label{thm:gencrossingchange}
Suppose $L$ and $L'$ are fibered links in $S^3$
related by a generalized crossing change with $\chi(L) = \chi(L')$.
Then the generalized crossing change is an $n$-twist around an arc $\alpha$, and one of the following holds:
\begin{enumerate}[\rm (1)]
\item $\alpha$ is clean and non-alternating,  
\item $n=\pm2$, and $\alpha$ is clean and alternating with $i_{\partial}(\alpha)=-n/2$, or
\item $n=\pm1$, and $\alpha$ is once-unclean and alternating with $i_{\partial}(\alpha)=-n$.
\end{enumerate}
\end{thm}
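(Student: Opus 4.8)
The plan is to recognize the generalized crossing change as surgery on a crossing circle and then feed it into Theorem~\ref{thm:surgery}. By definition an $n$-twist is $-1/n$-surgery (with respect to the $0$-framing) on an unknotted circle $c$ bounding a disk $D$ that meets $L$ transversely in two points. First I would isotope $D$ so that $D\cap F$ is a single properly embedded arc $\alpha$: since $\partial D=c$ can be taken disjoint from $L=\partial F$, every component of $D\cap F$ is either a circle or an arc with both endpoints at the two points of $D\cap L$, and using the incompressibility of the fiber $F$ together with the irreducibility of $S^3$ I can remove all circles by innermost-disk isotopies and all superfluous arcs by outermost-arc isotopies, leaving exactly one arc $\alpha$ joining the two points of $D\cap L$. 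Then $c$ is precisely the $\alpha$-loop, so the generalized crossing change is an $n$-twist around $\alpha$, as asserted in the first sentence of the conclusion.

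Next I would reduce the fiberedness of $L'$ to a statement about the twisted surface. Let $F'$ be the image of $F$ after the twist; it is a connected Seifert surface for $L'$ with $\chi(F')=\chi(F)=\chi(L)=\chi(L')$, hence of maximal Euler characteristic. Because the fiber of a fibered link is its unique minimal-genus Seifert surface \cite{BurZieNKF}, $L'$ is fibered if and only if $F'$ is (isotopic to) a fiber. This is exactly the conclusion governed by Theorem~\ref{thm:surgery} applied with $M=N(\gamma)=S^3$ and $\gamma=-1/n$: the surface survives as a fiber if and only if either $\alpha$ is clean and $-1/n=i_{\partial}(\alpha)+1/k$ for some integer $k$, or $\alpha$ is once-unclean and $-1/n=i_{\partial}(\alpha)$. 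Note that $N(\gamma)=S^3$ is automatic here, since $c$ is unknotted and $-1/n$ is a reciprocal-integer slope.

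It then remains to solve these two conditions over the integers, using the relationship (fixed in Section~\ref{section:Definitions} and reflected in the value of $i_{total}$ recorded in Theorem~\ref{thm:whenfiber}) that $i_{\partial}(\alpha)=0$ exactly when $\alpha$ is non-alternating and $i_{\partial}(\alpha)=\pm1$ exactly when $\alpha$ is alternating. In the clean case, $i_{\partial}(\alpha)=0$ gives $k=-n$ and imposes no restriction on $n$, yielding case~(1); while $i_{\partial}(\alpha)=\pm1$ forces $n=\pm2$ with $i_{\partial}(\alpha)=-n/2$, yielding case~(2). In the once-unclean case the slope $i_{\partial}(\alpha)$ must itself be a reciprocal integer, so $i_{\partial}(\alpha)=\pm1$, forcing $n=\pm1$ with $i_{\partial}(\alpha)=-n$ and $\alpha$ alternating, which is case~(3); the value $i_{\partial}(\alpha)=0$ is excluded since $0$-surgery on the unknot does not return $S^3$. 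This reproduces the trichotomy and is consistent with Theorem~\ref{thm:crossingchange} in the overlap $n=\pm1$, with case~(1) corresponding to the Stallings-twist family and case~(3) to the $\varepsilon$-twist.

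The main obstacle is the geometric reduction in the first step: controlling $D\cap F$ and verifying that the crossing disk can genuinely be isotoped to meet the fiber in a single arc without altering the twist, so that Theorem~\ref{thm:surgery} applies verbatim. A secondary point requiring care is the bookkeeping of framings and signs---matching the twist parameter $n$ to the surgery slope $-1/n$, and aligning $i_{\partial}$, $i_{total}$, and the alternating/non-alternating dichotomy with the conventions of Section~\ref{section:Definitions}---since a slip there would permute the three cases.
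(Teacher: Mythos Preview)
Your overall strategy is exactly the paper's: realize $c$ as an $\alpha$-loop for an arc $\alpha\subset F$, apply Theorem~\ref{thm:surgery} with slope $\gamma=-1/n$, and solve the resulting equations over $i_\partial(\alpha)\in\{-1,0,1\}$. The arithmetic in your last paragraph is correct, and your explicit use of fiber-uniqueness to pass from ``$L'$ is fibered'' to ``$F$ remains a fiber after the surgery'' is precisely what the paper compresses into one clause.

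The gap is the one you yourself flag. Your sentence ``every component of $D\cap F$ is either a circle or an arc with both endpoints at the two points of $D\cap L$'' presupposes that $c=\partial D$ is disjoint from $F$, not merely from $L=\partial F$; you never arrange this. When $c$ meets $\operatorname{int}(F)$ there are arcs of $D\cap F$ with one or both endpoints on $c\cap F$, and after eliminating the $c$-to-$c$ arcs by outermost-disk moves you may still be left with two arcs $p_1\to c$ and $p_2\to c$ rather than a single arc $p_1\to p_2$; no incompressibility or $\partial$-incompressibility argument finishes this off. Producing a taut Seifert surface for $L$ that misses $c$ and meets $D$ in a single arc is exactly the nontrivial content of Theorem~\ref{thm:Eulerc} (Scharlemann--Thompson for $n=\pm1$, Kalfagianni--Lin for $|n|>1$), and it genuinely uses the hypothesis $\chi(L')\ge\chi(L)$. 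Since $L$ is fibered, that taut surface is the fiber $F$, so citing Theorem~\ref{thm:Eulerc} in place of your first paragraph closes the gap and makes your proof coincide with the paper's.
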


\noindent The generalized crossing change of (1) in Theorem \ref{thm:gencrossingchange} implies a Stallings twist of type $(0,1)$ (see Theorem \ref{thm:4equiv}). For the generalized crossing change of (2) in Theorem \ref{thm:gencrossingchange}, the  crossing circle links a plumbed Hopf annulus and the $\pm2$-twist reverses the direction of twist in the Hopf annulus (see Theorem \ref{thm:Hopfband}). The resulting changes in monodromy for (3) of Theorem \ref{thm:gencrossingchange} are characterized in Corollary \ref{cor:gencrossingchange-monodromy}.
\smallskip

In Section \ref{section:alternativeproof}, we give an alternative proof of Theorem \ref{thm:whenfiber} using Theorem \ref{thm:surgery}.

\begin{acknowledgements}The authors would like to thank Tsuyoshi Kobayashi, Ryosuke Yamamoto, Alexander Coward, Ken Baker, and Miranda Antonelli for useful conversations, and the referee who helped shorten the length of Lemma 2 substantially.  We also thank the Isaac Newton Institute of Mathematical Sciences for hosting us during our early conversations.  DB is partially supported by EPSRC grants EP/H0313671, EP/G0395851 and  EP/J1075308, and gratefully acknowledges additional funding from the LMS and Leverhulme Trust.   MR is supported by EPSRC grant EP/G0395851, and KI was supported by EPSRC grant EP/H0313671.  
KS is supported by JSPS KAKENHI Grant Number 25400080.
\end{acknowledgements}

\section{Preliminaries}
\label{section:Definitions}
\subsection{Surfaces}

\begin{definition} 
A Seifert surface $F$ for a link $L$ is \emph{taut} if it maximizes Euler characteristic over all Seifert surfaces for $L$. We say the 
Euler characteristic for the link is $\chi(L) = \chi(F)$ if $F$ is taut.
\end{definition}

\begin{definition}[see \cite{GooOOBSA}]
Let $\alpha$, $\beta$ be two oriented arcs properly embedded in an oriented surface $F$ which intersect transversely. At a point $p \in \alpha \cap \beta$, define $i_p$ to be $\pm 1$ depending on whether the orientation of the tangent vectors $(T_p \alpha, T_p \beta)$ agrees with the orientation of $F$ or not. If $\alpha$ and $\beta$ intersect minimally over all isotopies fixing the boundary pointwise, then the following are well-defined:
\begin{enumerate}[\rm (1)]
\item The \emph{geometric intersection number}, $\rho(\alpha, \beta) := \sum_{p \in \alpha \cap \beta \cap int(F)} |i_p|$, is the number of intersections (without sign) between $\alpha$ and $\beta$ in the interior of $F$. 
\item The \emph{boundary intersection number}, $i_\bd (\alpha, \beta) := \frac{1}{2} \sum_{p \in \alpha \cap \beta \cap \bd F} i_p$, is half the sum of the oriented intersections at the boundaries of the arcs. 
\item The \emph{total intersection number}, $i_{total}(\alpha, \beta) := \rho(\alpha, \beta) + |i_\bd (\alpha, \beta)|$, is the sum of the (unoriented) interior intersections between $\alpha$ and $\beta$, and the absolute value of half the sum of the boundary intersections between the two arcs.
\item If $F$ is 
a fiber surface 
with monodromy $h$, then we define $\rho(\alpha) := \rho(\alpha, h(\alpha))$, and $i_\bd(\alpha) := i_\bd (\alpha, h(\alpha))$, and $i_{total} (\alpha) := i_{total}(\alpha, h(\alpha))$.
\end{enumerate}  
\end{definition}

\begin{definition}[see \cite{GabDFLS3}]
\label{def:Murasugisum}
Let $F_i \subset M_i$, for $i = 1, 2$, be compact oriented surfaces in the closed, oriented 3-manifolds $M_i$. Then $F \subset M_1 \# M_2 = M$ is a \emph{Murasugi sum} of $F_1$ and $F_2$ if
$$M = (M_1 \rmv int(B_1)) \cup_{S^2} (M_2 \rmv int(B_2)), \, \, \, \mbox{for 3-balls } B_i \mbox{ with } S^2 = \bd B_1 = \bd B_2,$$
and for each $i$,
$$S^2 \cap F_i \mbox{ is a  2}n\mbox{-}gon, \, \, \, \, \, \mbox{ and }\, \, \, \, \, (M_i \rmv int(B_i)) \cap F = F_i.$$
When $n=2$, this is known as a \emph{plumbing} of $F_1$ and $F_2$. Further, when $n=2$ and one of the surfaces, say $F_2$ is a Hopf annulus, this is known as a \emph{Hopf plumbing}. 
\end{definition}

\subsection{Sutured manifolds}

\begin{definition}[see \cite{GabFT3M,SchaSMGTN,GodHSSMMS}]
A \emph{sutured manifold}, $(N, \gamma)$, is a compact 3-manifold $N$, with a set $\gamma \subset \bd N$ of mutually disjoint annuli, $A(\gamma)$, and tori, $T(\gamma)$, satisfying the orientation conditions below. (We will only consider the case when $T(\gamma) = \emptyset$.) Call the core curves of the annuli $A(\gamma)$ the \emph{sutures}, and denote them $s(\gamma)$. Let $R(\gamma) = \overline{\bd N \rmv A(\gamma)}$. 

\begin{enumerate}
\item Every component of $R(\gamma)$ is oriented, and $R_+(\gamma)$ (respectively, $R_-(\gamma)$) denotes the union of the components whose normal vectors point out of (resp., into) $N$.
\item The orientations of $R_{\pm}(\gamma)$ are consistent with the orientations of $s(\gamma)$.
\end{enumerate}

\end{definition}

We will often simplify notation and write $(N, s(\gamma))$ in place of $(N, \gamma)$.

\begin{definition}[see \cite{GabFT3M,SchaSMGTN,GodHSSMMS}]
 We say that a sutured manifold $(N, \gamma)$ is a \emph{trivial sutured manifold} if it is homeomorphic to $(F \times I, \bd F \times I)$, for some compact, bounded surface $F$, with $R_+(\gamma) = F \times \set{1}$, $R_-(\gamma) = F \times \set{0}$, and $A(\gamma) = \bd F \times I$.
\end{definition}

\begin{definition}[see \cite{GabFT3M,SchaSMGTN,GodHSSMMS}]
Suppose $F$ is a Seifert surface for an oriented link $L$ in a manifold $M$. Then $(n(F), L) = (F \times I, \bd F \times \set{\frac{1}{2}})$ is a trivial sutured manifold. We call $(\overline{M \rmv n(F)}, L)$ the \emph{complimentary sutured manifold}.
\end{definition}

\begin{definition}[see \cite{GabFT3M,SchaSMGTN,GodHSSMMS}]
A properly embedded disk $D$ in $(N, \gamma)$ is a \emph{product disk} if $\bd D \cap A(\gamma)$ consists of two essential arcs in $A(\gamma)$. A \emph{product decomposition} is an operation to obtain a new sutured manifold $(N', \gamma')$ from a sutured manifold $(N, \gamma)$ by decomposing along an oriented product disk (see \cite{GabFT3M}). We denote this
$$ (N, \gamma) \stackrel{D}{\leadsto} (N', \gamma').$$  
\end{definition}

The following definition and theorem are due to Wu \cite{WuGHAT}. (Wu's definition is slightly more general, but we will only need the special case described here.)

\begin{definition}[\cite{WuGHAT}]
Let $M$ be a 3-manifold with non-empty boundary, and let $\gamma$ be a collection of essential simple closed curves in $\bd M$. An $n$-compressing disk (with respect to $\gamma$) is a compressing disk for $\bd M$ which intersects $\gamma$ in $n$ points; also call a compressing disk for $\bd M \rmv \gamma$ a 0-compressing disk.
\end{definition}

\begin{thm}[\cite{WuGHAT}]
\label{thm:ncompressingdisks}
Let $M$ be a 3-manifold with non-empty boundary, let $\gamma$ be a collection of essential simple closed curves in $\bd M$, and let $J$ be a simple closed curve in $\bd M$ disjoint from $\gamma$. Suppose that $\bd M \rmv \gamma$ is compressible. Let $M'$ be the result of attaching a 2-handle to $M$ along $J$. If $\bd M'$ is $n$-compressible, then $\bd M \rmv J$ is $k$-compressible for some $k \leq n$.
\end{thm}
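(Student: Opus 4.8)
The plan is to take a compressing disk for $\bd M'$ that meets $\gamma$ in $n$ points and push it off the attached $2$-handle, all the while controlling its intersections with $\gamma$. Write $M' = M \cup_A H$, where $H = D^2 \times I$ is the $2$-handle glued along an annular neighbourhood $A$ of $J$, so that $\bd M' = (\bd M \rmv \operatorname{int} A) \cup D_- \cup D_+$, the disks $D_{\pm} = D^2 \times \{\pm 1\}$ being the two new caps. Let $t = \{0\} \times I$ be the cocore arc of $H$; drilling out a neighbourhood of $t$ recovers $M$, and the meridian disk $P$ of $t$ has $\bd P = J$. The observation that organises the whole argument is that, since $\gamma$ is disjoint from $J$, we may isotope $\gamma$ into $\bd M \rmv \operatorname{int} A$; thus $\gamma$ is untouched by the surgery, and in particular the caps $D_\pm$ are disjoint from $\gamma$.

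First I would fix an $n$-compressing disk $D'$ for $\bd M'$ and put it in general position with respect to $H$. Circles of $D' \cap P$ are removed by innermost-disk isotopies inside the handle, and arcs of $D' \cap P$ --- whose endpoints lie on $\bd P = J \subset \operatorname{int} M'$ --- are simplified by outermost-arc isotopies supported in $\operatorname{int} M'$; none of these moves disturbs $\bd D'$. I would then try to clear $\bd D'$ off the caps by outermost-arc isotopies across $D_\pm$. The point is that every move that actually moves $\bd D'$ does so across a cap, i.e.\ across a region disjoint from $\gamma$, so the count $|\bd D' \cap \gamma|$ never increases. If these simplifications succeed in making $D'$ disjoint from $t$ and $\bd D'$ disjoint from the caps, then $D'$ sits in $M' \rmv N(t) = M$ as a disk with $\bd D' \subset \bd M \rmv A$ --- hence disjoint from $J$ --- meeting $\gamma$ in some $k \le n$ points, and (after checking essentiality) we are done.

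The hard part will be exactly the case where this pushing-off stalls: $D'$ runs essentially over the handle, or the curve it descends to is inessential in $\bd M \rmv J$, which is the genuine handle-addition phenomenon and the reason the theorem is not a triviality. Here I would invoke the standing hypothesis that $\bd M \rmv \gamma$ is compressible to produce a reference $0$-compressing disk $E$ disjoint from $\gamma$, put $\bd D'$ and $\bd E$ in minimal position, and study the graph they cut on the caps $D_\pm$. Running an Euler-characteristic and innermost--outermost count on the faces of this graph (a ``Scharlemann cycle''-type analysis) should locate a sub-disk --- of $D'$, or of a cut-and-paste of $D'$ with $E$ --- whose boundary is essential in $\bd M \rmv J$; because all of the cutting and pasting is confined to the $\gamma$-free caps, this sub-disk still meets $\gamma$ at most $n$ times, giving the desired $k \le n$. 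Proving that the combinatorial configuration can never be uniformly ``trivial'' --- so that such an essential sub-disk is forced to exist --- is the crux, and the step I expect to demand the most care.
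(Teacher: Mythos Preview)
The paper does not prove this theorem; it is quoted from Wu \cite{WuGHAT} and stated without proof, so there is no ``paper's own proof'' to compare your proposal against.

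That said, your outline is the standard shape of the handle-addition argument and is broadly what Wu does: pull an $n$-compressing disk for $\bd M'$ back into $M$ by clearing it off the attached $2$-handle, using that the caps are disjoint from $\gamma$ so the intersection count with $\gamma$ never goes up. You correctly identify the crux --- when the disk runs essentially over the handle and naive pushing-off fails --- and you correctly plan to bring in the hypothesised compressing disk for $\bd M \rmv \gamma$ and run a combinatorial (Scharlemann-cycle/graph-in-the-caps) analysis. But as you yourself say, you have not actually carried out that step: ``Proving that the combinatorial configuration can never be uniformly `trivial' \dots\ is the crux, and the step I expect to demand the most care.'' So what you have is a plausible plan, not a proof; the substantive content of Wu's theorem is precisely the argument you have deferred. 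If you want to complete this, you will need to either reproduce Wu's combinatorics or cite his paper directly, as the present authors do.
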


\begin{convention} Let us establish some informal conventions to aid in visualization:
\begin{enumerate}[\rm (1)]
\item $F \times [0, 1]$ will refer to a product where the $[0, 1]$ component is `vertical', with $F \times \set{0}$ on the `top', and $F \times \set{1}$ on the `bottom'.
\item Correspondingly, the orientation of $F$ will be such that $F \times \set{0}$ corresponds to the `positive' side of $F$.
\item $[0, 1] \times D$ will refer to a product where the $[0, 1]$ component is `horizontal', with $\set{0} \times D$ on the `left', and $\set{1} \times D$ on the `right'.
\item A fibered link complement will be thought of as arising from a mapping torus $(F \times [0, 1]) / h$, where $h: F \times \set{1} \to F \times \set{0}$, so that the product disk determined by $\alpha$ and $h(\alpha)$ will emanate `downwards' from $\alpha$ in $F \times \set{1}$, and `upwards' from $h(\alpha)$ in $F \times \set{0}$.
\end{enumerate}
\end{convention}

\section{Cutting arcs in fiber surfaces}
\label{section:cuttingarcs}
In this section we will give a direct proof of Theorem \ref{thm:whenfiber}. 
See Section \ref{section:alternativeproof} for an alternative proof using Ni's result \cite{NiDSKPM}.
Let $L$ be 
a fibered (oriented) link in a manifold $M$
with 
fiber $F$, 
monodromy $h$ (which is assumed to be the identity on $\bd F$), and suppose $\alpha$ is a properly embedded arc in $F$. Assume $\alpha$ and $h(\alpha)$ have been isotoped in $F$, fixing the endpoints, to intersect minimally. If the endpoints of $h(\alpha)$ emanate to opposite sides of $\alpha$, then $|i_\bd(\alpha)| = 1$. In this case, $\alpha$ is called \emph{alternating}. Otherwise, $|i_\bd(\alpha)| = 0$, and $\alpha$ is called \emph{non-alternating}. If $\rho(\alpha) = 0$, then $\alpha$ is said to be \emph{clean}. If $\rho(\alpha)=n > 0$, $\alpha$ is said to be \emph{$n$-unclean}. (See Figure \ref{fig:algint}.)
 \begin{figure}[h]
\begin{center}
\begin{minipage}{.23\textwidth}%%%clean&non-alternating
\begin{center}
\begin{picture}(80,40)(0,0)
\put(33,25){$\alpha$}
\linethickness{0.3mm}
\qbezier[20](30,0)(30,20)(30,40)
\put(0,20){$h(\alpha)$}
\linethickness{0.3mm}
\qbezier(30,0)(15,10)(0,10)
\qbezier(30,40)(15,30)(0,30)
\linethickness{0.5mm}
\put(0,0){\line(1,0){60}}`
\put(0,40){\line(1,0){60}}
\end{picture}
clean and\\non-alternating
\end{center}
\end{minipage}%%%
\begin{minipage}{.23\textwidth}%%%clean&alternating
\begin{center}
\begin{picture}(80,40)(0,0)
\put(33,25){$\alpha$}
\linethickness{0.3mm}
\qbezier[20](30,0)(30,20)(30,40)
\put(0,20){$h(\alpha)$}
\linethickness{0.3mm}
\qbezier(30,0)(45,10)(60,10)
\qbezier(30,40)(15,30)(0,30)
\linethickness{0.5mm}
\put(0,0){\line(1,0){60}}
\put(0,40){\line(1,0){60}}
\end{picture}
clean and\\alternating
\end{center}
\end{minipage}
\begin{minipage}{.23\textwidth}
\begin{center}
\begin{picture}(80,40)(0,0)
\put(33,25){$\alpha$}
\linethickness{0.3mm}
\qbezier[20](30,0)(30,20)(30,40)
\put(0,20){$h(\alpha)$}
\linethickness{0.3mm}
\qbezier(30,0)(15,10)(0,10)
\qbezier(30,40)(15,30)(0,30)
\qbezier(0,15)(30,15)(60,15)
\linethickness{0.5mm}
\put(0,0){\line(1,0){60}}
\put(0,40){\line(1,0){60}}
\end{picture}
once-unclean and\\non-alternating
\end{center}
\end{minipage}
\begin{minipage}{.23\textwidth}
\begin{center}
\begin{picture}(80,40)(0,0)
\put(33,25){$\alpha$}
\linethickness{0.3mm}
\qbezier[20](30,0)(30,20)(30,40)
\put(0,20){$h(\alpha)$}
\linethickness{0.3mm}
\qbezier(30,0)(45,10)(60,10)
\qbezier(30,40)(15,30)(0,30)
\qbezier(0,15)(30,15)(60,15)
\linethickness{0.5mm}
\put(0,0){\line(1,0){60}}
\put(0,40){\line(1,0){60}}
\end{picture}
once-unclean and\\alternating
\end{center}
\end{minipage}
\caption{}
\label{fig:algint}
\end{center}
\end{figure}

\begin{rem} If the arc $\alpha$ is fixed by the monodromy, then $h(\alpha)$ can be isotoped to have interior disjoint from $\alpha$, so this is a special case of a clean, non-alternating arc.
\end{rem}

Let $F'$ be the surface obtained by cutting $F$ along $\alpha$, and call the resulting (oriented) link $L' = \bd F'$.
We now restate Theorem \ref{thm:whenfiber}.
\medskip

\noindent
{\bf Thereom \ref{thm:whenfiber}.}\ 
{\it The surface $F'$ is a fiber for
$L'$ if and only if $i_{total}(\alpha) = 1$ (that is, when $\alpha$ is clean and alternating, or once-unclean and non-alternating).}
\medskip

Consider the 
fiber 
$F$, and a small product neighborhood $n(F) = F \times I$. This is a trivial sutured manifold, $(n(F), \bd F)$. Let $D_-$ be the product disk $\alpha \times I$. Now, because 
$F$ is a fiber for $L$, 
the complementary sutured manifold 
$(\overline{M \rmv n(F)}, \bd F)$ 
is also trivial. Let $D_+$ be the product disk determined by $\alpha
\subset F \times \set{1}$ and $h(\alpha) \subset F \times \set{0}$,
properly embedded in 
$\overline{M \rmv n(F)}$.
As $D_-$ is a product disk for $(n(F), \bd F)$, we may decompose along this disk to get another trivial sutured manifold, namely $(n(F'), \bd F')$. 
 
Recall that the manifold $n(F')$ was obtained by removing a small product neighborhood of $D_-$, say $[0, 1] \times D_- = [0, 1] \times (\alpha \times [0, 1])$, from $n(F)$. Let $B$ be the ball $[-1, 2] \times (\alpha \times [-1, 1])$. Now, attach to $(n(F'), \bd F')$ the $1$-handle $([-1, 2] \times (\alpha \times [-1, 0]))$, (attached along $([-1, 0] \times (\alpha \times \set{0}))$ and $([1, 2] \times (\alpha \times \set{0}))$). Call the resulting sutured manifold $(N_1, \bd F')$ (see Figure \ref{fig:N1}). We will refer to $(F' \times \set{1}) \subset \bd N_1$ as $\bd_- N_1$, and $\overline{\bd N_1 \rmv ((F' \times \set{1}) \cup (\bd F' \times I))}$ as $\bd_+ N_1$.

\begin{figure}
\begin{center}
\includegraphics[width=4in]{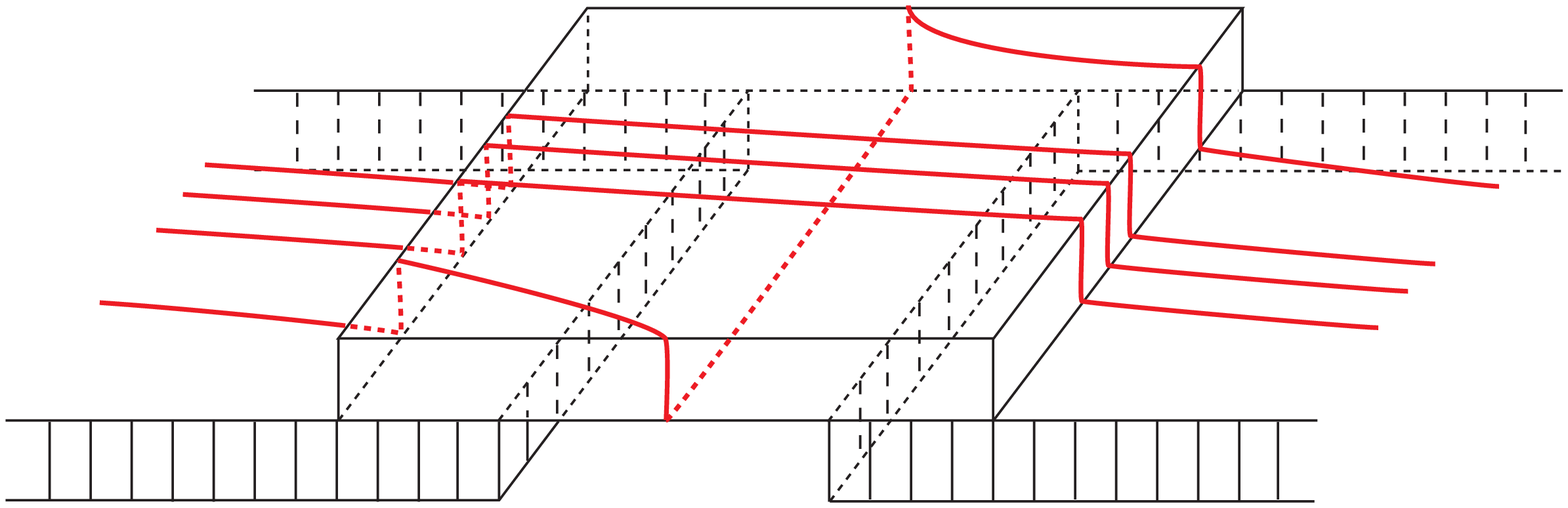}

\begin{picture}(400,0)(0,0)
\put(168,7){$\partial D'_{+}$}
\put(175,20){$\uparrow$}
\end{picture}
\caption{The sutured manifold $(N_1, \bd F')$. The boundary of the modified disk $\bd D'_+$ reflects the pattern of the product disk $D_+$.}
\label{fig:N1}
\end{center}
\end{figure}

Now, we can modify $D_+$ to a new disk $D'_+$ in 
$\overline{M \rmv N_1}$ 
as follows:
\begin{enumerate}
\item Let $\widetilde{D_+} = \overline{D_+ \rmv B}$. Note that $h(\alpha)$ corresponds, through vertical projection in $B$, to arcs in $(\set{-1} \times (\alpha \times [-1, 0])) \cup ([-1, 2] \times (\alpha \times \set{-1})) \cup (\set{2} \times (\alpha \times [-1, 0]))$.
\item Extend the sub-arc $\alpha \times \set{1}$ of $\widetilde{D_+}$ through vertical projection in $B$ to $\alpha \times \set{0}$.
\end{enumerate}

\begin{lem}\label{lem:uniquecompressingdiskandproductdisks} The co-core of the $1$-handle is the unique %compressing 
non-separating disk in $N_1$ disjoint from the sutures. Furthermore, every product disk in $(N_1, \bd F')$ can be made disjoint from this disk.
\end{lem}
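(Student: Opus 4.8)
The plan is to recognize $(N_1,\bd F')$ as a compression body: it is the trivial sutured manifold $n(F') = F'\times I$ with a single $1$-handle attached along $R_+$, and the disk in the statement is the co-core $D_c$ of that handle. Two structural facts organize everything. First, cutting $N_1$ along $D_c$ undoes the handle attachment and recovers the product $n(F')=F'\times I$; since this product is connected, $D_c$ is non-separating. Moreover $D_c$ is disjoint from $s(\gamma)$: its boundary lies in $R_+\cup A(\gamma)$, and the two arcs of $\bd D_c$ on $A(\gamma)$ (over $\bd\alpha$) run from the bottom of the handle up to its top, staying entirely on the $R_+$ side of the sutures. Second, $N_1$ is irreducible, and in the product $n(F')$ the horizontal surfaces $R_\pm$ are incompressible, as always for $F'\times I$. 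These two facts are the engine for both claims.

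For uniqueness, let $D$ be any non-separating disk in $N_1$ disjoint from $s(\gamma)$, so that $\bd D$ lies in a single component of $R(\gamma)$. I would isotope $D$ to meet $D_c$ minimally. Innermost circles of $D\cap D_c$ bound subdisks of $D_c$ which can be removed using irreducibility of $N_1$, and an outermost arc of $D\cap D_c$ on $D_c$ cuts off a disk $\Delta\subset D_c$ whose boundary arc along $\bd D_c$ lies in $R_+$ (because $D_c$ avoids the sutures); pushing $D$ across $\Delta$ strictly reduces $|D\cap D_c|$. After these reductions $D$ is disjoint from $D_c$, hence lies in the cut-open manifold $N_1\rmv n(D_c)=F'\times I$. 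There incompressibility of $R_\pm$ forces $\bd D$ to bound a disk in $R_+$ or $R_-$, so $D$ is inessential or separating. Thus the only non-separating possibility is that $D$ was parallel to $D_c$, and $D_c$ is the unique non-separating disk disjoint from the sutures.

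For the second statement, let $P$ be a product disk in $(N_1,\bd F')$, so that $\bd P\cap A(\gamma)$ consists of two essential arcs. I would again minimize $|P\cap D_c|$ and discard circle intersections by irreducibility. For the arc intersections, an outermost arc on $D_c$ cuts off a disk $\Delta\subset D_c$; because $D_c$ is disjoint from the sutures, the arc $\Delta\cap\bd D_c$ lies in $R_+$, so isotoping $P$ across $\Delta$ slides $\bd P$ only within $R_+$. This preserves the two essential arcs $\bd P\cap A(\gamma)$, so $P$ remains a product disk while $|P\cap D_c|$ decreases. Iterating makes $P$ disjoint from $D_c$, as claimed.

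The step I expect to be the main obstacle is the bookkeeping in these outermost-arc reductions: one must check that \emph{every} intersection arc is genuinely removable — in particular that no intersection arc is forced to run from one sutured arc of $\bd P$ to the other — and that each isotopy across an outermost disk $\Delta$ neither creates new intersections nor destroys the defining incidence of $P$ with $A(\gamma)$. All of this rests on the precise location of $\bd D_c$ relative to $s(\gamma)$, namely that $\bd D_c$ meets $A(\gamma)$ only on the $R_+$ side of the sutures, so I would pin down this location carefully at the outset before running the cut-and-paste arguments.
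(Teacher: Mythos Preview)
Your proof is correct and follows essentially the same strategy as the paper: an innermost-circle/outermost-arc reduction to make the competing disk disjoint from the co-core $D_c$, followed by the observation that no non-separating disk (and every product disk) survives in the cut-open product $F'\times I$. The only real difference is mechanical: where you \emph{isotope} $D$ (or $P$) across the outermost subdisk $\Delta\subset D_c$, the paper instead \emph{boundary-compresses} $D'$ along $\Delta$ into two disks and argues that at least one of them retains the relevant property (non-separating in case (A), product in case (B)) with strictly fewer intersections. Both moves are standard and interchangeable here; your isotopy version has the mild advantage that you never have to check which surgery piece inherits the property.

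Two small points of presentation. First, your description of $\bd D_c$ as meeting $A(\gamma)$ in two arcs is more detailed than necessary and may not match the paper's coordinates exactly; all that matters, and all the paper uses, is that $\bd D_c$ lies on the $R_+$ side of the sutures, so that any outermost subdisk $\Delta$ has $\Delta\cap\bd D_c$ there. Second, the sentence ``so $D$ is inessential or separating. Thus the only non-separating possibility is that $D$ was parallel to $D_c$'' is slightly garbled: once $D$ is boundary-parallel in $F'\times I$ to a disk $E\subset R_+$, you should say explicitly that $E$ containing zero or two feet of the $1$-handle makes $D$ separating in $N_1$, while $E$ containing exactly one foot makes $D$ isotopic to $D_c$. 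The paper is equally terse at this step, but that is the content of the conclusion.
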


\begin{proof}
Let $D$ be the co-core of the $1$-handle, and suppose $D'$ is either $(A)$ a distinct non-separating compressing disk disjoint from $\bd F'$ or $(B)$ a product disk for the sutured manifold; in either case chosen so as to minimize $|D \cap D'|$. Suppose $l$ were a loop of intersection innermost in $D$. Then $l$ bounds a subdisk $\widetilde{D}$ of $D$ and a subdisk $\widetilde{D'}$ of $D'$. These two disks co-bound a sphere, which then bounds a $3$-ball because $N_1$ is irreducible. This sphere provides a means of isotoping $\widetilde{D'}$ to $\widetilde{D}$, which reduces the number of loops in $D \cap D'$, contradicting minimality. Thus, we may suppose that $D \cap D'$ consists only of arcs. In this case, an outermost arc in $D$ provides a boundary compression of $D'$, separating $D'$ into two disks. At least one of these disks is a disk of the same type as $D'$ (i.e., $(A)$ or $(B)$ above), but which intersects $D$ fewer times than $D'$, again contradicting minimality. 

So we may assume now that $D' \cap D = \emptyset$. Hence, we may isotope $D'$ completely off of the $1$-handle. In case $(B)$, this establishes the second statement. In case $(A)$, either $D'$ is a compressing disk for $F' \times \set{0}$ in $F' \times I$, or it is boundary parallel to a disk in $F' \times \set{0}$. Neither of these is possible since $D'$ is non-separating, which establishes the first statement. 
\end{proof}

Now, we attach a $2$-handle to $N_1$ along a neighborhood of $\bd
D'_+$. Call the resulting sutured manifold $(N_2, \bd F')$, and keep
track of the attaching annulus, $A = n(\bd D'_+)$, on the one hand
thought of as contained in the boundary of $N_1$, and on the other hand
considered to be properly embedded in $N_2$. Observe that since $D_+$
was a product disk for the trivial sutured manifold 
$(\overline{M \rmv n(F)}, \bd F)$ 
which is homeomorphic to $N_1$, attaching the $2$-handle along $\bd
D'_+$ results in the same manifold as decomposing 
$(\overline{M \rmv n(F)}, \bd F)$ 
along $D_+$. Furthermore, the sutures in 
$(\overline{M \rmv N_2}, \bd F')$ 
can be slid along $D'_+$, and can be seen to agree with the result of
this decomposition. Therefore, as a sutured manifold, 
$(\overline{M \rmv N_2}, \bd F')$ 
is the same as the result of decomposing 
$(\overline{M \rmv n(F)}, \bd F)$ 
along the product disk $D_+$, and is thus a trivial sutured manifold. 

We conclude then that $F'$ is a 
fiber 
in 
a fibration 
for 
$L'$ if and only if $(N_2, \bd F')$ is trivial.

\begin{rem} We remind the reader that $(N_1, \bd F')$ is simply constructed from $(n(F'), \bd F')$ by attaching a $1$-handle along $F' \times \set{0}$, and that $(N_2, \bd F')$ is constructed by attaching a $2$-handle to $(N_1, \bd F')$.
\end{rem}

\begin{lem}\label{lem:cancelingdisk} If $(N_2, \bd F')$ is trivial, then there exists a compressing disk for $(N_1, \bd F')$, disjoint from the sutures, which intersects the boundary of $D'_+$ exactly once.
\end{lem}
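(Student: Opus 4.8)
The plan is to reduce the statement to a single intersection count and then control that count with Wu's Theorem. The co-core $D$ of the $1$-handle already exists as a compressing disk for $(N_1,\bd F')$ disjoint from the sutures, so the real content is that $D$ can be isotoped to meet $J:=\bd D'_+$ \emph{exactly} once. Moreover, by Lemma \ref{lem:uniquecompressingdiskandproductdisks}, $D$ is the only non-separating compressing disk disjoint from the sutures; since a separating curve meets every closed curve an even number of times, any compressing disk disjoint from the sutures that meets $J$ an odd number of times is forced to be non-separating, hence isotopic to $D$. Thus it suffices to show that the minimal geometric intersection $m:=|\bd D\cap J|$ equals $1$. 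That $m$ is odd follows from the hypothesis: since $N_2\cong F'\times I$ is a product, its first homology forces the $2$-handle along $J$ to kill precisely the class carried by the $1$-handle, so $\bd D$ and $J$ have algebraic intersection $\pm1$; and $m\neq 0$, since otherwise $D$ would survive disjointly into the product $N_2$ as a compressing disk disjoint from the sutures, which is impossible. The entire difficulty is therefore to promote this algebraic $\pm1$ to a \emph{geometric} $1$, i.e.\ to rule out $m\ge 3$.

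To do this I would feed the $2$-handle attachment into Wu's Theorem \ref{thm:ncompressingdisks}, with $M=N_1$, with $\gamma=\bd F'$ the sutures, and with $J=\bd D'_+$, so that $M'=N_2$. The hypothesis holds because $\bd N_1\rmv\gamma$ is compressible, witnessed by $D$. Since $N_2$ is trivial, $R_\pm(N_2)$ are incompressible, so $\bd N_2$ is not $0$-compressible; it is not $1$-compressible either, because the sutures separate $R_+(N_2)$ from $R_-(N_2)$ and a closed curve crosses them evenly; and it is $2$-compressible via the product disks of $F'\times I$. Wu's Theorem then yields a compressing disk $E'$ for $\bd N_1$, disjoint from $J$, meeting $\gamma$ in at most $2$ points. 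Parity forces this number to be $0$ or $2$, and the $0$ case is ruled out as above (a non-separating such disk would be $D$, giving $\bd D\cap J=\emptyset$, while a separating one would contradict the incompressibility of $R_\pm(N_2)$). Hence $E'$ is a product disk, which by Lemma \ref{lem:uniquecompressingdiskandproductdisks} may be taken disjoint from $D$, and it is disjoint from $J$ by construction.

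I would then run an induction on $-\chi\bigl(R_+(N_1)\bigr)$. Decomposing $(N_1,\bd F')$ along the product disk $E'$ leaves $D$, $J$, and their intersection number $m$ unchanged, all three being disjoint from $E'$; meanwhile $N_2$ decomposes along the same disk and remains trivial, because a product decomposition of a trivial sutured manifold is trivial. Each step strictly decreases $-\chi(R_+)$, so after finitely many decompositions I reach a reduced model with no further product disks, in which $N_1$ is a product with a single $1$-handle and the co-core $D$ visibly meets $J$ once. Tracing back, $m=1$, which produces the desired disk.

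The main obstacle is exactly this promotion of the algebraic intersection number to the geometric one, i.e.\ excluding the configurations with $m\ge 3$; the Wu-driven product-decomposition induction is precisely what accomplishes it. The delicate bookkeeping is to verify at each stage that the decomposition preserves (i) the triviality of $N_2$, (ii) the pair $(D,J)$ together with its intersection pattern, and (iii) that the base case genuinely forces a single intersection. Keeping $\bd D$ and $J$ in minimal position throughout, and appealing repeatedly to Lemma \ref{lem:uniquecompressingdiskandproductdisks} to keep the product disks off $D$, should make these checks routine.
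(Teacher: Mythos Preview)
Your strategy is the paper's strategy: reduce by product disks disjoint from $J=\bd D'_+$ and feed the remaining configuration into Wu's theorem. You have, in fact, noticed a shortcut the paper does not exploit. Your assertion that a \emph{separating} $0$-compressing disk $E'$ disjoint from $J$ contradicts triviality of $N_2$ is correct, but your one-line justification hides the point: since $E'$ is separating in $N_1$, $\bd E'$ separates $\bd N_1$, and the side $S_1\subset R_\pm(N_1)$ not meeting the sutures is a non-disk subsurface with $\bd S_1=\bd E'$; as $J$ is disjoint from $\bd E'$ but meets the sutures, $J$ lies entirely on the other side, so the $2$-handle attachment leaves $S_1$ untouched in $R_\pm(N_2)$, and $E'\subset N_1\subset N_2$ still compresses it. The paper, by contrast, keeps this separating disk, modifies it to be disjoint from the co-core (tacitly dropping disjointness from $J$), and finishes with a solid-torus/lens-space argument.

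The real gap is your base case. ``The co-core $D$ visibly meets $J$ once'' is not an argument. What your induction actually shows is that as long as $F'$ is not a disk, $N_2\cong F'\times I$ has essential product disks, Wu applies, and by your own case analysis the output is a product disk disjoint from $J$; hence the induction terminates only when $F'$ has been cut down to a disk. At that point $N_1$ is a solid torus with meridian the co-core $D$, and $N_2$ is a punctured lens space of order $m=|\bd D\cap J|$; triviality of $(N_2,\bd F')$ forces $m=1$. This lens-space step is precisely the paper's first case, and you must state it rather than appeal to visibility. With that filled in (and the $S_1$-argument above made explicit), your proof is complete and slightly cleaner than the paper's.
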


\begin{proof}

Begin with a maximal collection $\mathcal{D}$ of product disks for $(N_1, \bd F')$ that are disjoint from $\bd D'_+$. By Lemma \ref{lem:uniquecompressingdiskandproductdisks}, these disks can also be taken disjoint from the 1-handle of $N_1$. Since these will also be product disks for $(N_2, \bd F')$, the result of attaching the 2-handle along $\bd D'_+$ after this decomposition will be trivial if and only if $(N_2, \bd F')$ is trivial. Further, the result of decomposing $(N_1, \bd F')$ along the collection $\mathcal{D}$ is still a surface cross an interval, with a 1-handle attached. Thus, without loss of generality, we may assume the $(N_1, \bd F')$ has no product disks disjoint from $\bd D'_+$.

If $F'$ were a disk, then $(N_1, \bd F')$ would be a solid torus. In this case, $\bd D'_+$ must intersect the co-core of the 1-handle exactly once, else $(N_2, \bd F')$ would be a punctured lens space, and not a trivial sutured manifold.

Let us then assume that $F'$ is not a disk. It follows that there is an essential product disk in $(N_2, \bd F')$. A product disk intersects the sutures in two points, so by Theorem \ref{thm:ncompressingdisks}, there must be a compressing disk $D$ in $(N_1, \bd F')$ with boundary disjoint from $\bd D'_+$, and intersecting the sutures in at most two points. The disk $D$ cannot intersect the sutures in two points, or else it would be a product disk for $(N_1, \bd F')$ disjoint from $\bd D'_+$, contrary to the maximality condition of the initial collection of product disks. Further, it is not possible that $D$ intersect the sutures in just one point, since the sutures are separating in $\bd N_2$. Thus, $D$ is a compressing disk disjoint from $\bd D'_+$ and the sutures.

If $D$ were non-separating, then Lemma \ref{lem:uniquecompressingdiskandproductdisks} says that $D$ would be the co-core of the 1-handle. But then, $\bd D'_+$, being disjoint from $D$, could be isotoped completely off of the 1-handle, so that $D'_+$ would be a compressing disk for $F' \times \set{0}$ in $\overline{M \rmv n(F')}$, which cannot occur in a trivial sutured manifold.

Hence, $D$ is a separating disk. By an outermost arc argument similar to the proof of Lemma \ref{lem:uniquecompressingdiskandproductdisks}, we can show that $D$ may be assumed to be disjoint from the co-core of the 1-handle. Since $D$ cannot be a compressing disk for $F' \times I$, it must be that $\bd D$ is essential in $\bd_+ N_1$, but not in $F' \times \set{0}$. Hence, $D$ is parallel to a disk $D_{\bd}$ in $F' \times \set{0}$, which must contain both feet of the 1-handle. 

In this case, the region between $D$ and $D_\bd$, together with the 1-handle again forms a solid torus, and $\bd D'_+$ must intersect the co-core of the 1-handle exactly once, lest there be a punctured lens space in a trivial sutured manifold.

\end{proof}

\begin{proof}[Proof of Theorem \ref{thm:whenfiber}]
Combining the results of Lemmas \ref{lem:uniquecompressingdiskandproductdisks} and \ref{lem:cancelingdisk}, we see that if $F'$ is a 
fiber for $L',$
then $|\bd D \cap \bd D'_+| = 1$. Since $\bd D'_+$ reflects the product disk $D_+$, and therefore the pattern of $\alpha$ and $h(\alpha)$ on $F$, this shows that $\alpha$ must be either alternating and clean, or non-alternating and once-unclean.

Conversely, we know that if $\alpha$ were alternating and clean, then $F'$ would be 
the fiber of a fibration for $L'.$
Thus, it remains to show that if $\alpha$ is non-alternating and once-unclean, then $(N_2, \bd F')$ is trivial. This is shown by observing that in this case, $\bd D$ and $\bd D'_+$ form a canceling pair. The sutured manifold $(N_2, \bd F')$ is the result of attaching the $1$-handle with co-core $D$ to $(F' \times I, \bd F')$, and then a $2$-handle along $\bd D'_+$. As these are canceling handles, disjoint from the sutures, this is equivalent to doing neither, so that $(N_2, \bd F') \cong (F' \times I, \bd F')$, which is clearly a product sutured manifold. This completes the proof of Theorem \ref{thm:whenfiber}.
\end{proof}

\begin{rem} Observe that if $F'$ is not a fiber surface, this does not necessarily imply that $L'$ is not 
fibered.
It is possible that 
$L'$ fibers with a different surface as a fiber. 
We combine our results with those of Kobayashi to address this question when the manifold $M$ is a rational homology $3$-sphere in Section \ref{section:bandsurgeries}.
 \end{rem}

\section{Characterization of band surgeries on fibered links}
\label{section:bandsurgeries}

In this section we will characterize band surgeries.
Throughout this section, $L$ and $L'$ are oriented links in a manifold $M$  
related by a \emph{coherent band surgery} along a band $b$.
More precisely,  $b$ is an embedding $[0,1]\times [0,1]\to M$ 
such that 
$b^{-1}(L)=[0,1]\times\{0,1\} $, $b^{-1}(L')=\{0,1\}\times [0,1]$, 
and $L$ and $L'$ are the same as oriented sets except on $b([0,1]\times[0,1])$. 
For simplicity, we use the same symbol $b$ to denote the image $b([0,1]\times[0,1])$.
Since the numbers of components of $L$ and $L'$  differ by $1$,  
their Euler characteristics will never be equal.  By \cite{SchaThoLGCM,HirShiDSSIKYLS,DerMiyMotNSSKIII}, there exists a taut Seifert surface $F$ for $L$ or $L'$, say $L$, 
such that $F$  contains $b$, and so $\chi(L')>\chi(L)$. 
\begin{thm}[\cite{SchaThoLGCM,HirShiDSSIKYLS,DerMiyMotNSSKIII}]\label{thm:Euler}
Suppose $L$ and $L'$ are links in $S^3$.  
Then $\chi(L')>\chi(L)$ if and only if $L$ has a taut Seifert surface $F$ containing $b$. 
\end{thm}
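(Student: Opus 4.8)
The plan is to prove the two implications separately, handling everything except one deep input by a short Euler-characteristic computation, and isolating that input as the genuine obstacle.

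For the backward implication, suppose $L$ has a taut Seifert surface $F$ with $b \subset F$ and $b \cap \partial F = [0,1]\times\{0,1\}$. I would simply delete the band from the surface, setting $F' = \overline{F \rmv b}$. Since the two arcs $\{0\}\times[0,1]$ and $\{1\}\times[0,1]$ of $\partial b$ lie in $\mathrm{int}(F)$, the surface $F'$ has $\partial F' = L'$, and coherence of the surgery guarantees that the induced orientation makes $F'$ a genuine (oriented) Seifert surface for $L'$. Deleting the band is the inverse of attaching a $1$-handle, so $\chi(F') = \chi(F)+1$. Because $F$ is taut we have $\chi(L) = \chi(F)$, and since $\chi(L')$ is the maximum of $\chi$ over all Seifert surfaces for $L'$, this gives $\chi(L') \ge \chi(F') = \chi(F)+1 = \chi(L)+1 > \chi(L)$. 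This is the only computational content, and it is routine.

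For the forward implication, suppose $\chi(L') > \chi(L)$. Here I would invoke the main technical result of \cite{SchaThoLGCM,HirShiDSSIKYLS,DerMiyMotNSSKIII}: for any coherent band surgery, at least one of the two links admits a \emph{taut} Seifert surface containing the band $b$. Granting this, it remains only to exclude the possibility that this surface belongs to $L'$. The whole configuration is symmetric under interchanging $L \leftrightarrow L'$ (performing the surgery along $b$ on $L'$ recovers $L$, with the two pairs of edges of $b$ swapped), so if $L'$ had a taut Seifert surface containing $b$, the backward implication applied with the roles reversed would yield $\chi(L) \ge \chi(L')+1 > \chi(L')$, contradicting the hypothesis $\chi(L') > \chi(L)$. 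Hence the taut surface containing $b$ must be one for $L$, which is exactly the claim.

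The main obstacle is precisely the cited existence statement, that the band can always be isotoped onto a taut Seifert surface of one of the two links. A self-contained proof of this requires Gabai's sutured-manifold machinery: one starts with a taut Seifert surface for, say, $L'$, tries to attach the $1$-handle $b$, and then removes the intersections of $\mathrm{int}(b)$ with the surface by innermost-disk and outermost-arc arguments, controlling the Euler characteristic (Thurston norm) through a taut sutured-manifold hierarchy so that no complexity is lost in the process. I would treat this as a black box, since it is the substance of the cited papers; everything else in the theorem reduces to the bookkeeping above.
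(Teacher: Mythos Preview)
The paper does not actually prove this theorem; it is stated as a result from the cited references \cite{SchaThoLGCM,HirShiDSSIKYLS,DerMiyMotNSSKIII} and used as background. Your proposal is correct and matches the way the paper itself frames the result: the paragraph preceding the theorem already records the key black-box input (``there exists a taut Seifert surface $F$ for $L$ or $L'$, say $L$, such that $F$ contains $b$''), and your backward Euler-characteristic computation together with the symmetry argument is exactly how one extracts the stated equivalence from that input.
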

Suppose $L$ is a fibered link.
Then $F$ is a fiber surface for $L$, and the band $b$ is contained in $F$.
Call $\alpha:=b(\{\frac{1}{2}\}\times[0,1])$ the \emph{spanning arc} of the band. 
The surface $F'$, which is obtained by cutting $F$ 
along $\alpha$, can be regarded as  a Seifert surface for $L'$ by moving $F'$ slightly along $b$. 
Note that $\alpha$ is fixed by the monodromy of $F$ if  and only if $F'$ is a split union of two fiber surfaces, {\it i.e.} $L$ is a connected sum of the components of the split link $L'$.
Kobayashi characterized band surgeries in the case that $\chi(L')>\chi(L)+1$. 
By Kobayashi \cite{KobFLUO} and Yamamoto \cite{YamSTWCBRPDHB}, we have the following:
\begin{thm}
\label{thm:4equiv}
Suppose $L$ is a fibered  link in $S^3$. 
Then the following conditions are equivalent. 
\begin{enumerate}[\rm(1)]
\item $\chi(L')>\chi(L)+1$.
\item $F'$ is a pre-fiber surface.
\item There exists a disk $D$ such that 
the intersection of $D$ and $F$ is a disjoint union of $\partial D$ and $\alpha$, and $\partial D$ is essential in $F$ (hence Stallings twists of type $(0,1)$ can be performed).
\item $\alpha$ is clean and non-alternating but not fixed by the monodromy.
\end{enumerate}
\end{thm}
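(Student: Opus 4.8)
The plan is to establish the four-way equivalence by proving the single new equivalence $(3) \Leftrightarrow (4)$ directly through the sutured-manifold machinery developed for Theorem \ref{thm:whenfiber}, and then invoking Kobayashi \cite{KobFLUO} and Yamamoto \cite{YamSTWCBRPDHB} for $(1) \Leftrightarrow (2) \Leftrightarrow (3)$. At the outset I would record the bookkeeping fact that cutting $F$ along a properly embedded arc raises Euler characteristic by one, so $\chi(F') = \chi(F) + 1 = \chi(L) + 1$; since $F'$ is a Seifert surface for $L'$ we always have $\chi(L') \geq \chi(L) + 1$, whence condition (1) is equivalent to the statement that $F'$ fails to be taut. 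This reframing is what lets the monodromy hypotheses of (4) make contact with the Euler-characteristic condition (1).

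The heart of the argument is $(3) \Leftrightarrow (4)$. For $(4) \Rightarrow (3)$, assume $\alpha$ is clean, non-alternating, and not fixed by $h$, so that $i_{total}(\alpha) = 0$. Rerunning the analysis in the proof of Theorem \ref{thm:whenfiber}, the boundary $\bd D'_+$ now meets the co-core of the $1$-handle in $i_{total}(\alpha) = 0$ points; hence $D'_+$ isotopes off the handle to a compressing disk for $F' \times \set{0}$ in $(\overline{M \rmv n(F')}, \bd F')$, with $\bd D'_+$ essential in $F'$ exactly because $\alpha$ is not fixed (a fixed arc would make $\bd D'_+$ bound a disk, the degenerate split case). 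Since $\overline{M \rmv n(F')}$ is recovered from $\overline{M \rmv n(F)}$ by re-drilling a neighborhood of $\alpha$, re-interpreting this compressing disk in the complement of $F$ produces a disk $D$ that threads through the fiber precisely along $\alpha$; that is, $D \cap F = \bd D \sqcup \alpha$ with $\bd D$ essential in $F$, which is condition (3). The converse $(3) \Rightarrow (4)$ reverses this: a disk $D$ with $D \cap F = \bd D \sqcup \alpha$ becomes, after cutting $F$ along $\alpha$, a genuine compressing disk for $F'$, and the sutured-manifold analysis then forces $\bd D'_+$ to be disjoint from the co-core, i.e. $i_{total}(\alpha) = 0$ (clean and non-alternating), while essentiality of $\bd D$ excludes the fixed case.

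It remains to connect these to conditions (1) and (2), which I would do by citing the equivalence $(1) \Leftrightarrow (2) \Leftrightarrow (3)$ from \cite{KobFLUO,YamSTWCBRPDHB}, after verifying that Kobayashi's notion of \emph{pre-fiber surface} and of a Stallings twist of type $(0,1)$ match the disk $D$ constructed above. As a consistency check — and to make the logic self-contained on the new side — I would also note directly that the compressing disk obtained in the proof of $(4) \Rightarrow (3)$ shows $F'$ is not taut, so by the opening reframing $\chi(L') > \chi(L) + 1$, giving $(4) \Rightarrow (1)$; and I would record that the fixed-arc case (clean and non-alternating but fixed) yields $F'$ as a split union of two fiber surfaces, which is taut, and is therefore correctly excluded from both (1) and (4).

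The step I expect to be the main obstacle is the geometric translation underlying $(3) \Leftrightarrow (4)$: turning the abstract compressing disk $D'_+$ for $F'$ into an honest disk $D$ in the complement of $F$ with $D \cap F = \bd D \sqcup \alpha$, and tracking the re-drilling of $n(\alpha)$ and the resulting framing carefully enough to guarantee simultaneously that $\bd D$ is essential in $F$ and that $D$ meets the fiber beyond its boundary in exactly the single arc $\alpha$. A secondary, bookkeeping-heavy difficulty is reconciling our $i_{total}$, clean, and non-alternating vocabulary with Kobayashi's definition of a pre-fiber surface, so that the cited equivalence $(1) \Leftrightarrow (2) \Leftrightarrow (3)$ applies verbatim.
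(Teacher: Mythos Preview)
Your proposal is sound in outline but takes a genuinely different route from the paper. The paper's proof of Theorem~\ref{thm:4equiv} is almost entirely by citation: $(1)\Leftrightarrow(2)$ and $(4)\Rightarrow(2)$ are Kobayashi \cite{KobFLUO}, $(2)\Rightarrow(3)$ comes from Theorem~\ref{thm:Kobayashi-band} and the definition of type~F, and $(3)\Leftrightarrow(4)$ is simply cited as Yamamoto \cite[Lemma~3.4]{YamSTWCBRPDHB}. The only line written out is the trivial $(2)\Rightarrow(1)$. By contrast, you propose to \emph{re-prove} $(3)\Leftrightarrow(4)$ using the $N_1$, $D'_+$ machinery built for Theorem~\ref{thm:whenfiber}. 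That is a legitimate and interesting idea --- it makes the equivalence internal to this paper rather than outsourced --- but it is considerably more elaborate than Yamamoto's direct argument, which essentially observes that the disk $D$ of condition~(3) is the product disk $D_+$ with its $\alpha$-edge pushed through the fiber, and that $\bd D = \alpha \cup h(\alpha)$ is an embedded essential curve in $F$ exactly when $\alpha$ is clean, non-alternating, and not fixed.

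Two cautions about executing your plan. First, the ``re-drilling'' step you flag is real but slightly mis-framed: once $i_{total}(\alpha)=0$ and $\bd D'_+$ is isotoped off the $1$-handle, what you have is not merely \emph{some} compressing disk for $F'$ that must then be massaged into the disk $D$ of~(3); rather, $D'_+$ already \emph{is} that disk, because the modification in the construction of $D'_+$ pushes the $\alpha\times\{1\}$ edge through $n(D_-)$, so that $\operatorname{int}(D'_+)\cap F$ is precisely $\alpha$ and $\bd D'_+$ lands on the curve $\alpha\cup h(\alpha)\subset F$. Tracking this carefully dissolves the obstacle. Second, your phrase ``invoking Kobayashi and Yamamoto for $(1)\Leftrightarrow(2)\Leftrightarrow(3)$'' is not quite how the literature splits: Kobayashi supplies $(1)\Leftrightarrow(2)$, $(2)\Rightarrow(3)$, and $(4)\Rightarrow(2)$, so to close the cycle without Yamamoto you must use your own $(3)\Rightarrow(4)$ followed by Kobayashi's $(4)\Rightarrow(2)$. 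Your side observation that $(4)\Rightarrow(1)$ follows directly from compressibility of $F'$ is correct and gives an independent short-circuit.
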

See \cite{KobFLUO} for the definition of pre-fiber surfaces. 
Moreover Kobayashi showed the following:  
\begin{thm}[\cite{KobFLUO}]\label{thm:Kobayashi-band}
Suppose $F$ is a fiber surface and $F'$ is pre-fiber surface, then 
the band $b$ is $\lq\lq$type {\rm F}''  with respect to $F'$.
\end{thm}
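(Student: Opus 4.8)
This statement is quoted from Kobayashi \cite{KobFLUO}, and the notions of \emph{pre-fiber surface} and of a band of \emph{type} \textrm{F} are defined internally to that paper; consequently any proof must ultimately match our hypotheses to Kobayashi's definitions, and the plan is to assemble the geometric data we have already extracted and feed it into his classification.

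I would begin from the equivalences of Theorem \ref{thm:4equiv}. Since $F$ is a fiber surface and $F'$ is a pre-fiber surface, condition (2) is in force, hence so are (1), (3), and (4): the band surgery satisfies $\chi(L') > \chi(L) + 1$, the spanning arc $\alpha$ is clean, non-alternating, and not fixed by the monodromy, and there is a disk $D$ with $D \cap F$ equal to the disjoint union of $\partial D$ and $\alpha$ and with $\partial D$ essential in $F$. The first step is to use $D$ to put the band $b$ into a standard position. A regular neighborhood of $\partial D$ together with $b$ exhibits the local model of a Stallings twist of type $(0,1)$, and the hypothesis $\rho(\alpha) = 0$ guarantees that $h(\alpha)$ may be isotoped off the interior of $\alpha$, so that the product structure on the complementary sutured manifold $(\overline{M \rmv n(F)}, \bd F)$ is undisturbed away from $D$.

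The second step is to translate this standard position into Kobayashi's language. The sutured-manifold analysis underlying Theorem \ref{thm:whenfiber} shows that decomposing $(\overline{M \rmv n(F)}, \bd F)$ along the product disk $D_+$ determined by $\alpha$ and $h(\alpha)$ returns a trivial sutured manifold, and the disk $D$ certifies that this decomposition is realized by an essential compression recording exactly how $b$ sits in the fibration. This is precisely the combinatorial data that Kobayashi's designation ``type \textrm{F}'' records, so once the dictionary between the two descriptions is fixed the identification is immediate.

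The main obstacle I anticipate is exactly this last translation. Both ``type \textrm{F}'' and ``pre-fiber surface'' are phrased through Kobayashi's own disk-decomposition bookkeeping, and making the identification rigorous means reconciling his description of attaching bands with the cut-along-$\alpha$ description used here: one must verify that the band position forced by the Stallings disk $D$ is the one Kobayashi isotopes to, and that no competing band type can arise under $\chi(L') > \chi(L) + 1$. Establishing that the equivalences of Theorem \ref{thm:4equiv} genuinely exhaust the possibilities, rather than merely describing one of them, is where the real work lies.
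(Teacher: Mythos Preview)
The paper does not prove this statement at all: it is quoted verbatim from Kobayashi \cite{KobFLUO} (note the attribution in the theorem header), and the paper simply refers the reader to \cite{KobFLBCSTL} for the definition of type~F. So there is no ``paper's own proof'' to compare against.

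That said, your proposed argument is genuinely circular. You begin by invoking the equivalences of Theorem~\ref{thm:4equiv}, in particular deducing condition (3) (the existence of the Stallings disk $D$) from condition (2). But look at how the paper proves Theorem~\ref{thm:4equiv}: the implication (2)$\Rightarrow$(3) is obtained precisely \emph{from} Theorem~\ref{thm:Kobayashi-band} together with the definition of type~F. So you are assuming the very result you are trying to establish. The remaining implications in Theorem~\ref{thm:4equiv} that are independent of Theorem~\ref{thm:Kobayashi-band} give you (1), (2), and (4), but not the disk $D$ that your argument relies on. If you want an honest proof you would have to either reproduce Kobayashi's original argument, or find an independent route from the clean non-alternating condition (4) to the existence of $D$ and then to the type~F conclusion---neither of which the present paper attempts.
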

See \cite{KobFLBCSTL} for the definition of type F. 
He also characterized pre-fiber surfaces for fibered links in \cite[Theorem 3]{KobFLUO} and for split links in \cite[Theorem 3]{KobFLBCSTL}. 
In particular, together with Theorem \ref{thm:4equiv} and \cite[Theorem 3]{KobFLUO}, 
Theorem \ref{thm:Kobayashi-band} gives a complete characterization of
band surgeries between fibered links $L$ and $L'$ with
$\chi(L')>\chi(L)+1$. 
\bigskip
\begin{proof}[Proof of Theorem \ref{thm:4equiv}]
~\\
(1)$\Rightarrow$(2) See Kobayashi \cite[Theorem 2.1]{KobFLUO}.
\\
(2)$\Rightarrow$(1) If $F'$ is a pre-fiber surface, then, by  the definition of pre-fiber surfaces, 
it is compressible, and so $\chi(L')>\chi(F')=\chi(F)+1=\chi(L)+1$.
\\
(2)$\Rightarrow$(3) This follows from Theorem \ref{thm:Kobayashi-band} and the definition of type F.
\\
(3)$\Leftrightarrow$(4) See Yamamoto \cite[Lemma 3.4]{YamSTWCBRPDHB}.
\\
(4)$\Rightarrow$(2) See Kobayashi \cite[Proposition 4.5]{KobFLUO}.
\end{proof}
For the remaining case, we will characterize band surgeries 
between $L$ and $L'$ with $\chi(L')=\chi(L)+1$. 
In this case, $F$ is a fiber surface. 
By Theorem \ref{thm:whenfiber}, then $F'$ is a fiber surface for $L'$
if and only if  $\alpha$ is  clean and alternating, or once-unclean and non-alternating.
We will translate these conditions of the arc $\alpha$ into conditions of the band $b$.

\subsection{Hopf banding and generalized Hopf banding}
First we show that if the spanning arc of a band surgery is a clean alternating arc, then the band surgery corresponds to a Hopf plumbing.
If $F$ is obtained by plumbing of 
a surface $F''$ and a Hopf annulus $A$, 
then $F$ is obtained by attaching a band 
$\overline{A\rmv F''}$ to $F''$,
and so we call $F$ a {\em Hopf banding} of 
$F''$ along 
$\overline{A\rmv F''}$.
While known for some time, proofs of the following theorem can be found in Sakuma \cite{SakMDCSSUO}, or Coward and Lackenby \cite[Theorem 2.3]{CowLacUGOK}:
\begin{thm}\label{thm:Hopfband}
Suppose $F$ is a fiber surface.  
Then $F$ is a Hopf banding of $F'$ if and only if $\alpha$ is clean and alternating.
More precisely, the Hopf annulus of  the Hopf banding has a right-handed twist or left-handed twist,  depending on whether $i_{\partial}(\alpha)=1$ or $i_{\partial}(\alpha)=-1$ $($see Figure \ref{fig:Hopfannuli}$)$. 
\end{thm}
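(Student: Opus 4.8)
The plan is to localize everything to a neighborhood of the arc and reduce both implications to a single computation on an annulus, reading off the handedness from the sign of $i_{\bd}(\alpha)$ only at the end. Throughout I use that $h$ is the identity on $\bd F$, so that $\alpha$ and $h(\alpha)$ share both endpoints, and that by Theorem \ref{thm:whenfiber} the cut surface $F'$ is already known to be a fiber surface when $\alpha$ is clean and alternating.

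For the forward implication, suppose $F$ is the Hopf banding of $F'$ along $\overline{A \rmv F'}$, with $A$ a Hopf annulus plumbed onto $F'$ as in Definition \ref{def:Murasugisum}, and take $\alpha$ to be the cocore (spanning arc) of $A$. By the Stallings--Gabai formula for the monodromy of a plumbing, $h = h' \circ \tau_c^{\varepsilon}$, where $c$ is the core of $A$, $\tau_c$ the Dehn twist about $c$, $\varepsilon = \pm 1$ the handedness of $A$, and $h'$ the monodromy of $F'$ extended by the identity over the banding region. Isotoping $\alpha$ into $A$ so that it avoids the support of $h'$, we get $h(\alpha) = \tau_c^{\varepsilon}(\alpha)$, and the problem becomes a picture on the single annulus $A \cong S^1 \times [0,1]$ with $c = S^1 \times \{\frac{1}{2}\}$ and $\alpha = \{*\} \times [0,1]$. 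Passing to the universal cover, $\tau_c^{\varepsilon}(\alpha)$ is isotopic rel $\bd$ to an arc meeting $\alpha$ only at its endpoints, so $\rho(\alpha) = 0$ ($\alpha$ is clean) and its two ends leave $\alpha$ on opposite sides, so $|i_{\bd}(\alpha)| = 1$ ($\alpha$ is alternating); tracking orientations in this same picture gives $i_{\bd}(\alpha) = \varepsilon$, which is the refined ``more precisely'' statement.

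For the converse, assume $\alpha$ is clean and alternating. Then $\alpha$ and $h(\alpha)$ are disjoint in the interior and share both endpoints, so $c := \alpha \cup h(\alpha)$ is an embedded loop in $F$ and $\alpha$ is a cocore of an annular neighborhood $A := n(c) \subset F$. Since $\alpha$ is clean, the product disk $D_+$ determined by $\alpha$ and $h(\alpha)$ (from the proof of Theorem \ref{thm:whenfiber}) is embedded, and exhibits $c$ as an unknot bounding a disk in $\overline{M \rmv n(F)}$; the alternating condition $|i_{\bd}(\alpha)| = 1$ then forces the surface framing of $c$ to be $\pm 1$, so that $A$ carries exactly one full twist and is a Hopf annulus, of handedness $\varepsilon = i_{\bd}(\alpha)$ by the orientation bookkeeping of Corollary \ref{cor:monodromy-band}. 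Taking the Murasugi sphere to be the boundary of a regular neighborhood of the plumbing square $\alpha \times I$, one checks that $F \cap B_2 = A$ and $F \cap B_1$ is isotopic to $F'$, exhibiting $F$ as a plumbing of $F'$ with $A$; cutting along $\alpha$ de-plumbs $A$ and returns $F'$, so $F$ is the Hopf banding of $F'$ with the asserted twist.

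The main obstacle is the converse, and within it the framing step: showing that a clean alternating arc yields an \emph{unknotted} annulus with \emph{exactly one} full twist of the correct sign, rather than a knotted or multiply-twisted annulus. I expect to control this by using the embedded product disk $D_+$ to certify that $c$ is unknotted and unnested, and by converting $i_{\bd}(\alpha) = \pm 1$ into the self-linking of $c$ through the fibered structure (the monodromy bookkeeping recorded in Corollary \ref{cor:monodromy-band}). Once $A$ is identified as a Hopf annulus the remaining verification of Definition \ref{def:Murasugisum} is routine, and comparison with Figure \ref{fig:Hopfannuli} fixes the right-handed versus left-handed dichotomy.
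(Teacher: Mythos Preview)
The paper does not give its own proof of this theorem; it attributes the result to Sakuma and to Coward--Lackenby (Theorem 2.3 there) and simply cites those references. So there is no in-paper argument to compare against, and your attempt must stand on its own.

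Your forward implication is essentially fine: the Gabai monodromy formula $h = h' \circ \tau_c^{\varepsilon}$ for a Hopf plumbing is standard, and since $\alpha$ lies in the band $b = \overline{A\rmv F'}$ while $h'(\gamma)$ lies in $F'$, interior intersections with $\alpha$ are automatically avoided and the endpoint picture is the Dehn-twist picture on $A$. (Your phrase ``isotope $\alpha$ so that it avoids the support of $h'$'' is imprecise---$h'$ may well have support meeting the plumbing square---but the conclusion survives because $int(\alpha)\subset b$ is disjoint from $F'$.)

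The converse is where the gap is. You set $c=\alpha\cup h(\alpha)$, observe via $D_+$ that $c$ is unknotted, and then assert that ``the alternating condition $|i_\bd(\alpha)|=1$ forces the surface framing of $c$ to be $\pm1$,'' invoking Corollary~\ref{cor:monodromy-band}. That corollary is about the \emph{once-unclean, non-alternating} case (generalized Hopf banding) and says nothing about clean alternating arcs, so the citation is wrong; more importantly, the implication itself is not established. The number $i_\bd(\alpha)$ records only which side of $\alpha$ the ends of $h(\alpha)$ emanate toward, not a linking/framing number of $c$ in $M$, and you have not explained how one is converted into the other. This is precisely the ``main obstacle'' you flag at the end, and it is not discharged. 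A second, smaller issue: invoking Theorem~\ref{thm:whenfiber} at the outset to say $F'$ is a fiber is unnecessary for what you actually argue, and is potentially circular, since the paper's proof of Theorem~\ref{thm:whenfiber} already uses the clean--alternating $\Rightarrow$ Hopf de-plumbing direction as ``known.''

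The standard route (and the one in the cited references) bypasses the framing computation entirely: take $B$ to be a regular neighborhood in $M$ of the embedded product disk $D_+$ (extended through $n(L)$ near $p,q$). Then $B$ is a $3$--ball, $B\cap F = n(\alpha\cup h(\alpha))$, and one checks directly that $\partial B\cap F$ is a $4$--gon and that $F\cap B$ is a once-twisted unknotted annulus. The alternating hypothesis is exactly what makes $\partial B \cap F$ a square (rather than two bigons), and the sign $i_\bd(\alpha)=\pm1$ is read off from the local picture at $p$ as the handedness. You gesture at this construction in your last paragraph, but it should \emph{replace} the framing step, not follow it.
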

\begin{figure}[h]
\begin{center}
\includegraphics[width=10cm]{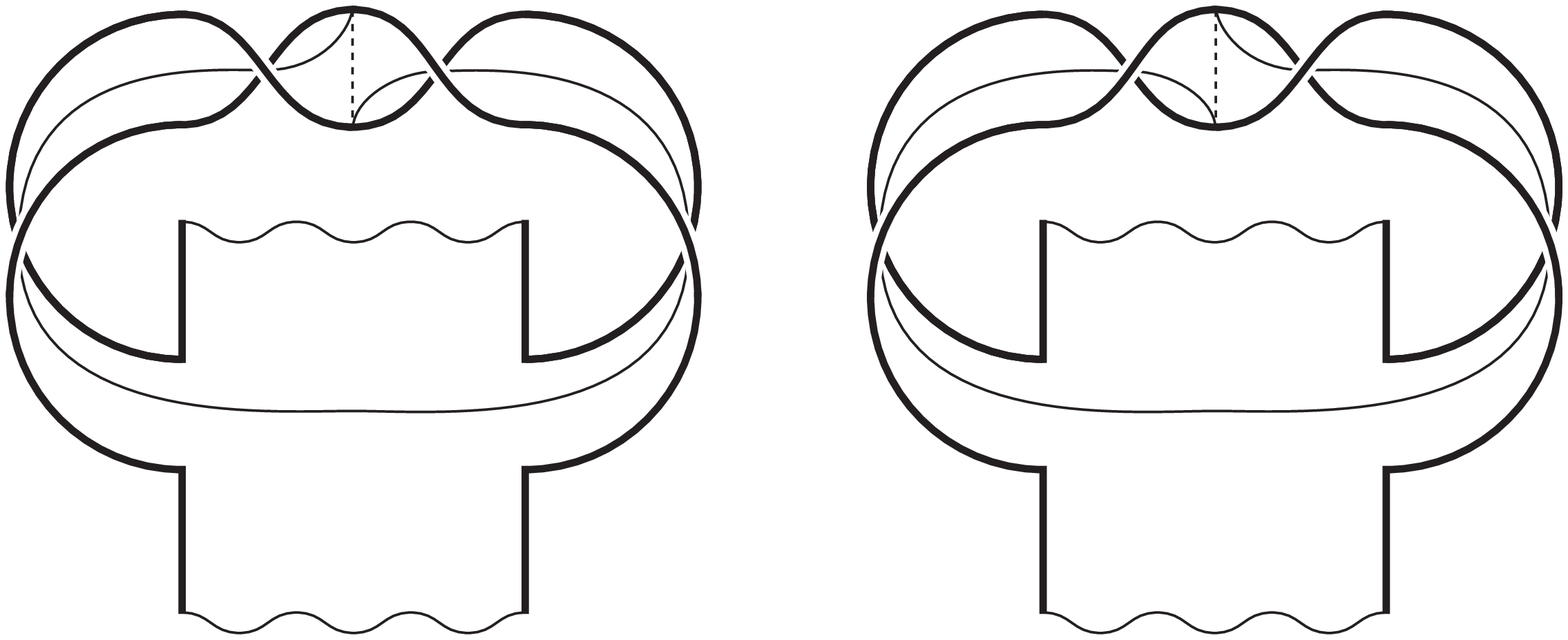}

\begin{picture}(400,0)(0,0)
\put(123,115){$\alpha$}
\put(110,45){$h(\alpha)$}
\put(270,115){$\alpha$}
\put(270,45){$h(\alpha)$}
\put(100,0){$i_{\partial}(\alpha)=1$}
\put(250,0){$i_{\partial}(\alpha)=-1$}
\end{picture}
\caption{Left- and right-handed Hopf bandings of a surface.}
\label{fig:Hopfannuli}
\end{center}
\end{figure}

Next we introduce ``generalized Hopf banding'', which corresponds to a once-unclean non-alternating arc.
We remark that a clean alternating arc $\alpha$ can be moved to be non-alternating 
by adding an unnecessary intersection point with $h(\alpha)$. 
Hence we can say that the band surgery for a once-unclean non-alternating arc is a generalization of Hopf banding.

\begin{definition}
{\rm
Let $\ell$ be an arc in $F'$ such that $\ell$ has a single self-intersection point and $\ell\cap \partial F'=\partial\ell$. 
Let $b$ be a once-overlapped band over $F'$ such that $b([0,1]\times\{\frac{1}{2}\})$ is parallel to $\ell$, see Figure \ref{fig:gHopfband}.
\begin{figure}[h]
\begin{center}
\includegraphics[width=10cm]{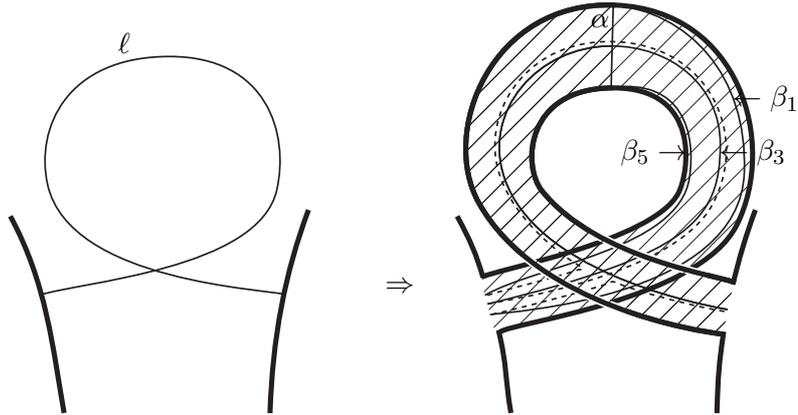}

\begin{picture}(400,0)(0,0)
\put(100,150){$\ell$}
\put(277,160){$\alpha$}
\put(330,130){$\leftarrow\beta_1$}
\put(325,110){$\leftarrow\beta_3$}
\put(288,110){$\beta_5\rightarrow$}
\put(200,60){$\Rightarrow$}
\end{picture}
\caption{Generalized Hopf banding.}
\label{fig:gHopfband}
\end{center}
\end{figure}
If the surface $F$ is obtained by attaching $b$ to $F'$, 
we call $F$ a {\em generalized Hopf banding} of $F'$ along $b$. 
}
\end{definition}

\begin{example} 
By generalized Hopf banding of a Hopf annulus, 
we can obtain two different $3$-component fibered links (see Figure \ref{fig:gHex}).
\begin{figure}[h]
\begin{center}
\includegraphics[scale=.5]{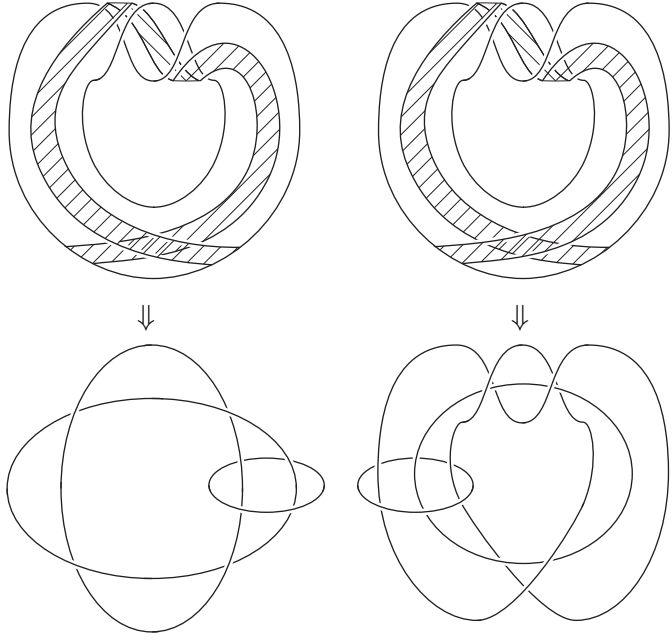}

\begin{picture}(400,0)(0,0)
\put(125,130){$\Downarrow$}
\put(265,130){$\Downarrow$}
\end{picture}
\caption{Generalized Hopf banding of Hopf annulus.}
\label{fig:gHex}
\end{center}
\end{figure}
\end{example}
Note that, for each arc in $F'$ having a self intersection point, 
we have two choices of generalized bandings depending on the overlapped sides. 
Moreover, any Hopf banding is a generalized Hopf banding for $\ell$ 
whose self intersection point is removable by isotopies in $F'$. 
Then we have the following:
\begin{thm}\label{thm:gHopfband}
Suppose $F$ is a fiber surface. 
Then $F$ is not a Hopf banding but a generalized Hopf banding of $F'$ 
if and only if the spanning arc $\alpha$ is once-unclean and non-alternating.
\end{thm}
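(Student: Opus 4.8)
The plan is to translate the combinatorial data describing the band $b$ of a generalized Hopf banding into the intersection data $(\rho(\alpha), i_\bd(\alpha))$ of the spanning arc with its monodromy image, and then to read off both directions of the equivalence from this dictionary together with Theorems \ref{thm:whenfiber} and \ref{thm:Hopfband}. The starting observation is that the shadow arc $\ell \subset F'$, with its single self-intersection point, is the projection of the band core onto $F'$. Tracing the once-overlapped band through the mapping-torus picture shows that, before any simplification, $\alpha$ meets $h(\alpha)$ in a single interior point coming from the self-overlap of $\ell$, with the two feet of $b$ arranged so that the endpoints of $h(\alpha)$ leave $\alpha$ on the same side. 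Thus the raw local model of a generalized Hopf banding presents $\alpha$ as once-unclean and non-alternating, i.e. $\rho(\alpha) = 1$ and $i_\bd(\alpha) = 0$.

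The crux is then to decide when this raw model is already in minimal position. I would argue that the self-intersection of $\ell$ is removable by an isotopy supported in $F'$ if and only if the single interior intersection of $\alpha$ and $h(\alpha)$ is inessential, that is, removable by an isotopy of $F$ fixing $\bd F$ pointwise. In that removable case the minimal position of $\alpha, h(\alpha)$ has $\rho(\alpha) = 0$, and the same simplification converts the once-overlapped band into a twisted (Hopf) band, so that $\alpha$ becomes clean and alternating and $F$ is an honest Hopf banding by Theorem \ref{thm:Hopfband}; this is exactly the content of the remark that any Hopf banding is a generalized Hopf banding whose self-intersection point is removable. Consequently, under the hypothesis that $F$ is a generalized Hopf banding which is \emph{not} a Hopf banding, the self-crossing of $\ell$ is not removable, the interior intersection survives into minimal position, and $\alpha$ is genuinely once-unclean and non-alternating. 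This establishes the forward implication.

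For the converse, suppose $\alpha$ is once-unclean and non-alternating, so $\rho(\alpha) = 1$ and $i_\bd(\alpha) = 0$. By Theorem \ref{thm:Hopfband}, $F$ is not a Hopf banding of $F'$, since $\alpha$ is not clean and alternating; and by Theorem \ref{thm:whenfiber}, $F'$ is a fiber surface. To exhibit $F$ as a generalized Hopf banding, I would take $\ell \subset F'$ to be the projection of the band core: the single essential interior intersection with $\rho(\alpha) = 1$ yields exactly one self-intersection point of $\ell$, which is non-removable precisely because that intersection is essential, and the condition $i_\bd(\alpha) = 0$ forces $b$ to attach as a once-overlapped band with core parallel to $\ell$. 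This matches the definition, so $F$ is a generalized Hopf banding of $F'$ along $b$ that is not a Hopf banding.

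The main obstacle I expect is making the removability correspondence precise. Both Hopf and generalized Hopf bandings are presented with a self-intersecting shadow $\ell$, so the entire distinction between the two theorems — and the reason Theorems \ref{thm:Hopfband} and \ref{thm:gHopfband} partition the two cases of Theorem \ref{thm:whenfiber} without overlap — rests on showing that removability of the self-crossing of $\ell$ in $F'$ coincides with inessentiality of the interior intersection of $\alpha$ and $h(\alpha)$ in $F$, and further that simplifying that intersection simultaneously restores the alternating boundary pattern characteristic of a Hopf band. Verifying that a once-overlapped band can never introduce a second essential interior intersection, so that $\rho(\alpha) \le 1$ holds at the outset, is a smaller but necessary part of pinning down the local model.
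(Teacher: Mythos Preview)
Your proposal is correct and follows essentially the same route as the paper: the paper makes your ``raw local model'' precise by explicitly writing a representative of $h(\alpha)$ as a concatenation $\beta = \beta_1 \cup \cdots \cup \beta_5$ (three sub-arcs in the band, two in $F'$), checking that $\alpha \cup \beta$ bounds a disk in the complement of $F$, and then invoking exactly your dichotomy via Theorem~\ref{thm:Hopfband} to exclude the reducible (clean alternating) case. For the converse the paper constructs $\ell$ from the two sub-arcs $\beta_2, \beta_4 \subset F'$ of $h(\alpha)$ rather than from any ``projection of the band core'', which is where your sketch is vaguest; your appeal to Theorem~\ref{thm:whenfiber} is unnecessary.
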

\begin{proof}
Suppose $F$ is a generalized Hopf banding of $F'$ along a band $b$.
Let $b':[0,1]\times[0,1]\to F$ be a projection of $b:[0,1]\times[0,1]\to M$ into $F'$, 
and put $I_i:=[\frac{i}{5},\frac{i+1}{5}]$ for $i\in\{0,1,2,3,4\}$. 
We may assume that $b'([0,1]\times\{\frac{1}{2}\})=\ell$ and 
$b'|_{I_1\times[0,1]}(\frac{s+1}{5},t)=b'|_{I_3\times[0,1]}(\frac{4-t}{5},s)$ for $(s,t)\in[0,1]\times[0,1]$, 
and so the self intersection of $\ell$ is $b'(\frac{3}{10},\frac{1}{2})=b'(\frac{7}{10},\frac{1}{2})$. 
We also assume that $b(\frac{7}{10},\frac{1}{2})$ is over $b(\frac{3}{10},\frac{1}{2})$.
Let $\beta_1,\beta_2,\beta_3,\beta_4,\beta_5$ be arcs in $F$ 
($\beta_1,\beta_3,\beta_5\subset b([0,1]\times[0,1])$ and $\beta_2,\beta_4,\subset F'$) 
defined by the following (see Figure \ref{fig:gHopfband}): 
\begin{eqnarray*}
\beta_1&:=&\{b(s,\frac{1-2s}{3})\ |\ 0\le s\le\frac{1}{2}\}\\
\beta_2&:=&\{b'(s,\frac{1}{3})\ |\ 0\le s\le\frac{3}{10}\}\cup\{b'(s,\frac{1}{2})\ |\ \frac{11}{15}\le s\le1\}\\
\beta_3&:=&\{b(s,\frac{1}{2})\ |\ 0\le s\le1\}\\
\beta_4&:=&\{b'(s,\frac{2}{3})\ |\ 0\le s\le\frac{3}{10}\}\cup\{b'(s,\frac{1}{2})\ |\ 0\le s\le\frac{2}{3}\}\\
\beta_5&:=&\{b(s,\frac{2+2s}{3})\ |\ 0\le s\le\frac{1}{2}\}
\end{eqnarray*}
Set $\beta:=\beta_1\cup\beta_2\cup\beta_3\cup\beta_4\cup\beta_5$. 
Then $h(\alpha)$ is isotopic to $\beta$ in $F$, since $\beta$ is a proper arc in $F$ with $\partial\beta=\partial\alpha$ 
and $\alpha\cup\beta$ bounds a disk in the complement of $F$. 
The end points of $\beta$ emanate to the same side of $\alpha$ and
$int(\alpha)\cap int(\beta)=b(\frac{1}{2},\frac{1}{2})$. 
Now $b$ is not the band of a Hopf banding, and so $\alpha$ is not clean alternating by Theorem \ref{thm:Hopfband}.
Therefore $\alpha$ is once-unclean and non-alternating.

Suppose now that $\alpha$ is once-unclean and non-alternating.
Set $\beta:=h(\alpha)$.
Then $\beta$ is divided into five arcs $\beta_1,\beta_2,\beta_3,\beta_4,\beta_5$ by cutting along $b(\{0,1\}\times[0,1])$ 
so that $\beta_i$ connects $\beta_{i-1}$ and $\beta_{i+1}$ for $i\in\{1,2,3,4,5\}$, $\beta_0=\beta_6=\alpha$.
We may assume that $\beta_1,\beta_3,\beta_5$ are represented as above.
Set $\ell':=\beta_2\cup \{b(0,s)\ |\ \frac{1}{3}\le s\le\frac{2}{3}\}\cup\beta_4$.  
The arc $\ell'$ attaches to $\partial F'$ at $\{b(0,s)\ |\ \frac{1}{3}\le s\le\frac{2}{3}\}$. 
We have an arc $\ell$ with a single self intersection point by moving $\ell'$ slightly into the interior of $F'$.
Then $F$ is a generalized Hopf banding of $F'$ for $\ell$, and not a Hopf banding by Theorem \ref{thm:Hopfband}.
\end{proof}
\subsection{Generalized Hopf banding for fiber surfaces}
It is well known that a Hopf banding is a fiber surface if and only if 
the original surface is a fiber surface.
In general a resulting surface of a Murasugi sum is a fiber surface if and only if the summands are both fiber surfaces \cite{GabMSNGO,GabDFLS3}.
We have a similar result for generalized Hopf bandings.
\begin{thm}\label{thm:gHopfbandfiber}
Suppose $F$ and $F'$ are surfaces  
such that $F$ is a generalized Hopf banding of $F'$.
Then  $F$ is a fiber surface if and only if $F'$ is a fiber surface.
\end{thm}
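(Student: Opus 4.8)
The plan is to pair Theorem~\ref{thm:whenfiber} with a sutured-manifold computation on the complement. The hypothesis that $F$ is a generalized Hopf banding of $F'$ along $b$ is geometric, concerning the once-overlapped band $b$ (equivalently, the arc $\ell$ with a single self-intersection); cutting $F$ along the spanning arc $\alpha$ of $b$ recovers $F'$. When the self-intersection of $\ell$ is inessential, $b$ is a Hopf band and the banding is an honest Hopf plumbing, for which the equivalence ``$F$ fibers $\iff F'$ fibers'' is the classical (de)plumbing theorem of Stallings and Gabai \cite{GabMSNGO,GabDFLS3} (compatibly with Theorem~\ref{thm:Hopfband}). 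So the genuinely new content is the essential case, on which I concentrate; there, whenever $F$ is a fiber, Theorems~\ref{thm:Hopfband} and~\ref{thm:gHopfband} identify $\alpha$ as once-unclean and non-alternating, so that $i_{total}(\alpha)=1$.

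The forward direction is then immediate: if $F$ is a fiber, then $i_{total}(\alpha)=1$, and Theorem~\ref{thm:whenfiber}, applied to the fiber $F$ and its properly embedded arc $\alpha$, asserts exactly that the cut surface $F'$ is a fiber for $L'=\partial F'$.

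For the converse I cannot quote Theorem~\ref{thm:whenfiber} directly, since its construction is built from the monodromy $h$ of $F$, which is unavailable before $F$ is known to fiber; instead I would run the analogous argument on the complement, using the monodromy $h'$ of $F'$. The starting point is that, as $F=F'\cup b$ gives $n(F)=n(F')\cup n(b)$, the two complementary sutured manifolds differ only by the handle $n(b)$:
$$\bigl(\overline{M\rmv n(F)},\,\partial F\bigr)\ \cup\ n(b)\ =\ \bigl(\overline{M\rmv n(F')},\,\partial F'\bigr)\ \cong\ F'\times I,$$
the last identification using that $F'$ fibers. Hence $\overline{M\rmv n(F)}$ is precisely the product $F'\times I$ cut along the disk that $b$ sweeps out on the positive side of $F'$, and it remains to show this cut is again a product.

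I would establish this exactly as in the proof of Theorem~\ref{thm:whenfiber}. Because $b$ lies on a single side of $F'$, its disk is not a product disk, so I would present the cut through an auxiliary $1$-handle/$2$-handle pair and show that the once-overlapped structure of $b$ makes the pair cancel, yielding $\overline{M\rmv n(F)}\cong F\times I$. I expect this cancellation to be the main obstacle. Its geometric heart is that the single self-intersection of $\ell$ forces the relevant attaching curve to meet the co-core of the $1$-handle in exactly one point---the intrinsic, $h'$-side analogue of the intersection count $|\partial D\cap\partial D'_+|=1$ that drove the proof of Theorem~\ref{thm:whenfiber}. Once the canceling pair is in hand, reconciling the sutures with the product structure is routine, and the equivalence follows.
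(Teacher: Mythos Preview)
Your forward direction is correct and matches the paper exactly.

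For the converse, you have the right intuition---a single intersection with the co-core of the $1$-handle $n(b)$ yields cancellation---but your proposed execution is both less direct than the paper's and contains an imprecise step. The sentence ``$\overline{M\rmv n(F)}$ is precisely the product $F'\times I$ cut along the disk that $b$ sweeps out'' is not right: removing the $1$-handle $n(b)$ from $F'\times I$ (a $3$-ball meeting $\partial(F'\times I)$ in two disks on the suture annulus) is not the same operation as cutting along a properly embedded disk. More importantly, you never name the $2$-handle (equivalently, the product disk) that is supposed to cancel this $1$-handle; you speak of ``the relevant attaching curve'' and of running the argument via the monodromy $h'$ of $F'$, but you do not produce the disk.

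The paper bypasses all of this. In the proof of Theorem~\ref{thm:gHopfband} an explicit arc $\beta\subset F$ was constructed, depending only on the generalized Hopf band structure (not on any monodromy), with $\alpha\cup\beta$ bounding a disk $D$ in $\overline{M\rmv n(F)}$. Because $\beta$ meets $int(\alpha)$ once and emanates to the same side of $\alpha$ at both endpoints, $\partial D$ is a product disk for $(\overline{M\rmv n(F)},\partial F)$ that meets the co-core of $n(b)$ exactly once. A single product decomposition along $D$ therefore cancels the $1$-handle and yields $(\overline{M\rmv n(F')},\partial F')$, which is trivial since $F'$ fibers. No auxiliary $1$-handle/$2$-handle scaffolding, no appeal to $h'$, and no replay of the machinery of Theorem~\ref{thm:whenfiber} is needed: the disk $D$ is already in hand from Theorem~\ref{thm:gHopfband}.
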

\begin{proof}

One direction follows from Theorems \ref{thm:whenfiber}, \ref{thm:Hopfband}, and \ref{thm:gHopfband}. 

We will show that if $F'$ is a fiber surface, then the complimentary sutured manifold $(\overline{M\rmv n(F)},\partial F)$ is trivial, and so $F$ is a fiber surface. 
As in the proof of Theorem \ref{thm:gHopfband},  $\alpha\cup \beta$ bounds a disk in the complement of $F$. 
From the disk, we have the product disk $D$ for $(\overline{M\rmv n(F)},\partial F)$. 
Note that $n(F)$ is obtained from $n(F')$ by attaching a $1$-handle $n(b)$. 
Since $ int(\alpha)$ intersects $int(\beta)$ at a point, and $\beta$ emanates away from $\alpha$ in the same direction at both endpoints of $\alpha$, $\partial D$ intersects a co-core of the $1$-handle at a point, 
and so  $D$ cancels the $1$-handle.
Then  $(\overline{M\rmv n(F)},\partial F)$ is decomposed into $(\overline{M\rmv n(F')},\partial F')$ by $D$, i.e.
$$(\overline{M\rmv n(F)},\partial F)\stackrel{D}{\leadsto}(\overline{M\rmv n(F')},\partial F').$$
Since $F'$ is a fiber surface,  $(\overline{M\rmv n(F')},\partial F')$ is a trivial sutured manifold, 
and so  $(\overline{M\rmv n(F)},\partial F)$ is also trivial. 
Hence $F$ is a fiber surface.
\end{proof}

By Theorems \ref{thm:whenfiber}, \ref{thm:Hopfband}, \ref{thm:gHopfband}, and \ref{thm:gHopfbandfiber}, we have the following: 
\begin{thm}\label{thm:FF'fiber}
{\rm(1)} Suppose $F$ is a fiber surface, 
and $b$ is a band in $F$ such that $b\cap \partial F=b([0,1]\times \{0,1\})$.
Set $F':=\overline{F\rmv b}$.
Then  $F'$ is a fiber surface if and only if $F$ is a generalized Hopf banding of $F'$ along $b$.

{\rm(2)} Suppose $F'$ is a fiber surface 
and $b$ is a band attached to $F'$, 
i.e. $b\cap F'=b(\{0,1\}\times[0,1])\subset\partial F'$. 
Set $F:=F'\cup b$. 
Then $F$ is a fiber surface if and only if $F$ is a generalized Hopf banding of $F'$ along $b$. 
\end{thm}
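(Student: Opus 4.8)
The plan is to obtain both statements by assembling the four theorems cited immediately above it, so that essentially all the work lies in organizing the logic and in treating the two surfaces symmetrically. The key observation binding everything together is that a Hopf banding is a special case of a generalized Hopf banding (namely when the self-intersection of the core arc $\ell$ is removable by an isotopy in $F'$). Consequently ``$F$ is a generalized Hopf banding of $F'$'' splits into exactly two mutually exclusive sub-cases, controlled respectively by Theorem~\ref{thm:Hopfband} and Theorem~\ref{thm:gHopfband}: either the spanning arc $\alpha$ is clean and alternating (a genuine Hopf banding), or $\alpha$ is once-unclean and non-alternating (generalized but not Hopf). Since these are precisely the two arc-types with $i_{total}(\alpha)=1$, this is the bridge to Theorem~\ref{thm:whenfiber}.

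For part (1) I would take the given fiber surface to be $F$, set $\alpha=b(\{\frac{1}{2}\}\times[0,1])$, and note that $F'=\overline{F\rmv b}$ is exactly the surface obtained by cutting $F$ along $\alpha$. Because $F$ is a fiber surface, Theorems~\ref{thm:Hopfband} and~\ref{thm:gHopfband} apply directly and yield the chain of equivalences: $F$ is a generalized Hopf banding of $F'$ if and only if $\alpha$ is clean-alternating or once-unclean-non-alternating, if and only if $i_{total}(\alpha)=1$. Theorem~\ref{thm:whenfiber} then converts the final condition into ``$F'$ is a fiber surface'', closing the equivalence.

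For part (2) the two implications call on different theorems, and this is the only place where care is genuinely needed. For the reverse implication I would assume that $F$ is a generalized Hopf banding of $F'$ and invoke Theorem~\ref{thm:gHopfbandfiber} directly: since $F'$ is a fiber surface by hypothesis, so is $F$. For the forward implication I would assume $F$ is a fiber surface; then $F$ and $F'$ are both fiber surfaces, $F'$ arises from $F$ by cutting along the spanning arc $\alpha$, and Theorem~\ref{thm:whenfiber} forces $i_{total}(\alpha)=1$. With $F$ now known to be a fiber, Theorems~\ref{thm:Hopfband} and~\ref{thm:gHopfband} again apply and identify $F$ as a generalized Hopf banding of $F'$ along $b$.

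The single point to watch --- the closest thing here to an obstacle --- is that the banding characterizations (Theorems~\ref{thm:Hopfband} and~\ref{thm:gHopfband}) are phrased with $F$ as the fiber surface, whereas part (2) supplies $F'$ as the fiber. I must therefore route the forward direction of part (2) through the ``if'' hypothesis that $F$ is itself a fiber before applying those characterizations, and reserve Theorem~\ref{thm:gHopfbandfiber}, the one statement that is symmetric in the fiber hypothesis, for the reverse direction. Beyond this bookkeeping there is no deeper content: the result is a clean corollary of the preceding classification.
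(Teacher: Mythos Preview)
Your proposal is correct and takes essentially the same approach as the paper, which simply states that the theorem follows ``by Theorems \ref{thm:whenfiber}, \ref{thm:Hopfband}, \ref{thm:gHopfband}, and \ref{thm:gHopfbandfiber}'' without spelling out the logic. Your write-up is in fact more explicit than the paper's, and your observation about routing the forward direction of part~(2) through the hypothesis that $F$ is a fiber (so that Theorems~\ref{thm:Hopfband} and~\ref{thm:gHopfband} apply) is exactly the bookkeeping the paper leaves implicit.
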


Theorem \ref{thm:Euler} implies that any coherent band surgery on links can be regarded as 
an operation of cutting a taut Seifert surface along the band.  
Then as a translation of Theorem \ref{thm:FF'fiber}, we have proven Theorem \ref{thm:fiberedlinks}.
\medskip

\noindent
{\bf Theorem \ref{thm:fiberedlinks}.}
{\it
Suppose $L$ and $L'$ are links in $S^3$, and $L'$ is obtained from $L$ by a coherent band surgery
and $\chi(L')=\chi(L)+1$. 

{\rm (1)} Suppose $L$ is a fibered link. Then $L'$ is a fibered link if and only if 
the fiber $F$ for $L$ is a generalized Hopf banding of a Seifert surface $F'$ for $L'$ along $b$. 

{\rm (2)} Suppose $L'$ is a fibered link. Then $L$ is a fibered link if and only if 
a Seifert surface $F$ for $L$ is a generalized Hopf banding of the fiber $F'$ for $L'$ along $b$. }
\medskip

It is well known that any automorphism of a surface 
can be represented by a composition of Dehn twists. 
Let $F$ be a fiber surface with monodromy $h$.  
Honda, Kazez, and Matic \cite{HonKazMatRVDCSB} showed the following:
\begin{lem}\label{lem:alternating}\cite[Lemma 2.5]{HonKazMatRVDCSB}
Suppose $h$ is a composition of right hand Dehn twists along circles in $F$. 
Then $h$ is right-veering, {\it i.e.}
any arc $\alpha$ in $F$ is alternating, or else $h(\alpha)$ is isotopic to $\alpha$ in $F$ ($\alpha$ is non-alternating and clean). 
In other words, $i_{\partial}(\alpha,h(\alpha))=1$ if $h(\alpha)$ is not isotopic to $\alpha$. 
\end{lem}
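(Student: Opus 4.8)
The plan is to prove the stronger, standard fact that any product $h=\tau_{c_k}\circ\cdots\circ\tau_{c_1}$ of right-handed Dehn twists is \emph{right-veering}, by induction on $k$, and then to translate this into the language of the excerpt. For a properly embedded arc $\alpha$, isotope $h(\alpha)$ into minimal position with $\alpha$ rel $\bd F$; at each endpoint one may ask whether $h(\alpha)$ emanates to the right of $\alpha$ (in the sense of the direction of travel along $\alpha$), a notion well defined from the orientation of $F$. Since at one endpoint $\alpha$ points into $F$ and at the other it points out of $F$, veering to the right at both endpoints yields equal signs $i_{p_1}=i_{p_2}$ in the definition of $i_{\bd}(\alpha,h(\alpha))$, whereas veering right at one endpoint and left at the other yields $i_{p_1}=-i_{p_2}$. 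Hence ``right-veering at both endpoints'' is equivalent to $|i_{\bd}(\alpha)|=1$, i.e. $\alpha$ is alternating, and the orientation conventions fixed before Theorem \ref{thm:Hopfband} make this common sign $+1$ for right-handed twists. If instead $h(\alpha)\simeq\alpha$ rel $\bd$, then $\rho(\alpha)=0$ and $i_{\bd}(\alpha)=0$, so $\alpha$ is clean and non-alternating. Thus the lemma is equivalent to the assertion: for every $\alpha$, either $h(\alpha)$ lies to the right of $\alpha$ at both endpoints, or $h(\alpha)\simeq\alpha$.

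The base case $k=0$ is trivial, as $h=\mathrm{id}$ fixes every arc. The geometric heart is the case $k=1$: a single right-handed twist $\tau_c$ is right-veering. Here I would place $\alpha$ in minimal position with $c$. If $\alpha\cap c=\emptyset$ then $\tau_c(\alpha)=\alpha$. Otherwise, although $\tau_c$ is supported in an interior annulus $n(c)$, so $\tau_c(\alpha)$ agrees with $\alpha$ near each endpoint \emph{before} tightening, isotoping $\tau_c(\alpha)$ into minimal position with $\alpha$ rotates its initial direction at each endpoint. A local computation in the twisting annulus --- following $\alpha$ from an endpoint $x$ to its first crossing with $c$ and tracking how the right-handed shear drags this subarc around $c$ --- shows the tightened arc departs strictly to the right of $\alpha$ at $x$, and symmetrically at the other endpoint. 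This is exactly the local picture underlying Theorem \ref{thm:Hopfband}, where the right-handed Hopf band has $i_{\bd}(\alpha)=1$, and the handedness of the twist is precisely what selects ``right'' over ``left''.

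For the inductive step I would write $h=\tau_{c_k}\circ h'$ with $h'$ right-veering by induction, and invoke closure of right-veering maps under composition. The key property is monotonicity: any orientation-preserving homeomorphism $\phi$ fixing $\bd F$ pointwise preserves the relation ``to the right of'' at each endpoint, because $\phi$ fixes $x$, preserves orientation, and its differential fixes the inward direction at $x$, hence preserves the ordering of inward directions there, while minimal position is preserved because $\phi$ is a homeomorphism. Combining this with transitivity of ``to the right of'' (established via the universal-cover picture of arcs based at $x$), one obtains $h(\alpha)=\tau_{c_k}(h'(\alpha))$ to the right of $h'(\alpha)$ by the case $k=1$ applied to the arc $h'(\alpha)$, and $h'(\alpha)$ to the right of $\alpha$ by induction, so $h(\alpha)$ lies to the right of $\alpha$. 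If equality holds at every stage we land in the isotopic alternative; otherwise strict veering at some stage propagates, giving the alternating conclusion.

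I expect the main obstacle to be the $k=1$ base case: making rigorous that a right-handed twist moves every essential arc strictly to the right at \emph{both} endpoints, and correctly matching the handedness and orientation conventions so that the resulting sign is $i_{\bd}=+1$ rather than $-1$. By contrast, the inductive step is comparatively formal once the well-definedness, monotonicity, and transitivity of the ``to the right'' relation are in hand, and the reduction of the isotopic case to ``clean and non-alternating'' is immediate.
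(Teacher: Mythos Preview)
The paper does not give its own proof of this lemma; it simply quotes it as \cite[Lemma~2.5]{HonKazMatRVDCSB}. Your proposal is essentially a sketch of the original Honda--Kazez--Mati\'c argument that the citation points to: show a single right-handed Dehn twist is right-veering, establish that the ``to the right'' relation is transitive (via lifts to the universal cover), conclude that right-veering maps form a monoid, and then induct on the number of twist factors. The translation you give between ``right-veering at both endpoints'' and the paper's sign convention $i_{\bd}(\alpha,h(\alpha))=1$ is correct.

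Two small comments. First, the ``monotonicity'' step you include is not really needed in the form you state it: once you know $\tau_{c_k}$ is right-veering, applying it to the arc $h'(\alpha)$ already gives $\tau_{c_k}(h'(\alpha))$ to the right of $h'(\alpha)$, and then transitivity finishes the job; you do not need to transport the relation through a homeomorphism. Second, you correctly flag the $k=1$ case as delicate, but the transitivity of ``to the right'' is equally nontrivial---it is \emph{not} a purely local statement at the endpoint and genuinely requires the universal-cover (or bigon-criterion) argument you allude to, since two arcs can coincide to arbitrarily high order at a boundary point. Both of these are handled carefully in the Honda--Kazez--Mati\'c reference, so your outline is sound.
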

Remark that $h(\alpha)$ is isotopic to $\alpha$ if and only if there exists a $2$-sphere $S$ such that $S\cap F=\alpha$. 
If we assume additionally that $\partial F$ is prime,  then any essential arc in $F$ is alternating. 
We will discuss the case where $\partial F$ is composite in Subsection \ref{ssec:composite}.

Suppose a fiber surface $F$ with monodromy $h$ is obtained by plumbing of two surfaces 
$F_1$ and $F_2$, where $F_1$ is a Hopf annulus with left hand twist.
Let $C$ be a core circle of $F_1$. 
We denote by $t_C$ the right hand Dehn twist along $C$.
Then $(t_C^{-1}\circ h)|_{F_2}$ is isotopic to the monodromy for $F_2$. 
Hence if $F$ is obtained from a disk in $S^3$ by successively plumbing Hopf annuli with right hand twist, 
then $h$ is a composition of right hand Dehn twists.
By Theorem \ref{thm:fiberedlinks} (1) and Lemma \ref{lem:alternating}, 
we have the following:
\begin{cor}\label{cor:Hopfplumbing}
Let $L$ be an oriented link in $S^3$ with fiber $F$ such that $F$ is obtained from a disk
by successively plumbing Hopf annuli with right hand twists 
(or by successively plumbing Hopf annuli with left hand twists). 
Suppose $L'$ is a link obtained from $L$ by a coherent band surgery and $\chi(L')=\chi(L)+1$. 
Then $L'$ is a fibered link if and only if 
$F$ is a Hopf banding of a Seifert surface $F'$ for $L'$ along $b$. 
\end{cor}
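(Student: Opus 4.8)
The plan is to specialize Theorem \ref{thm:fiberedlinks}(1), using the fact that the monodromy of $F$ is veering to eliminate the genuinely generalized (non-Hopf) banding case. First I would translate the hypothesis on $F$ into a statement about its monodromy $h$: by the discussion immediately preceding the corollary, if $F$ is built from a disk by successively plumbing right-handed Hopf annuli, then $h$ is a composition of right-handed Dehn twists, and symmetrically for the left-handed case. Passing to the mirror image in $S^3$ reverses the handedness of every twist (replacing $h$ by $h^{-1}$) while preserving fiberedness, the band $b$, and the type of the spanning arc, so it suffices to treat the right-handed case.

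Next I would apply Lemma \ref{lem:alternating}: since $h$ is a composition of right-handed Dehn twists, it is right-veering, so every properly embedded arc $\alpha$ in $F$ is either alternating or else clean and non-alternating (the case $h(\alpha)\simeq\alpha$). The consequence I need is the contrapositive: no arc in $F$ can be simultaneously once-unclean and non-alternating.

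Now I would run the main argument. Let $\alpha=b(\{\frac{1}{2}\}\times[0,1])$ be the spanning arc and $F'=\overline{F\rmv b}$. By Theorem \ref{thm:fiberedlinks}(1), $L'$ is fibered if and only if $F$ is a generalized Hopf banding of $F'$ along $b$; by Theorems \ref{thm:Hopfband} and \ref{thm:gHopfband} such a banding is a Hopf banding exactly when $\alpha$ is clean and alternating, and a strictly generalized one exactly when $\alpha$ is once-unclean and non-alternating. The right-veering property rules out the latter, so whenever $L'$ is fibered the arc $\alpha$ must be clean and alternating, i.e.\ $F$ is a Hopf banding of $F'$ by Theorem \ref{thm:Hopfband}. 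Conversely, a Hopf banding is in particular a generalized Hopf banding, so if $F$ is a Hopf banding of $F'$ along $b$ then $L'$ is fibered by Theorem \ref{thm:fiberedlinks}(1) (equivalently, $\alpha$ is clean and alternating, $i_{total}(\alpha)=1$, and $F'$ is a fiber by Theorem \ref{thm:whenfiber}).

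I do not expect a serious difficulty: the corollary is essentially Theorem \ref{thm:fiberedlinks}(1) refined by the single observation that a veering monodromy admits no once-unclean non-alternating arc, so the strictly generalized Hopf banding case cannot occur. The only point requiring care is the mirror-image reduction of the left-handed case, which one must check is compatible with the coherent band $b$ and the surgery carrying $L$ to $L'$.
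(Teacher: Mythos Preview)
Your proof is correct and follows essentially the same approach as the paper, which derives the corollary directly from Theorem \ref{thm:fiberedlinks}(1) and Lemma \ref{lem:alternating}. You have simply spelled out the details of how these combine, including the mirror-image reduction for the left-handed case, which the paper leaves implicit.
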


\begin{rem}
\label{rem:BaaderDehornoy}
Baader and Dehornoy have just announced a similar result in \cite{BaaDehTP}.
\end{rem}

\subsection{Band surgeries on $(2,p)$-torus link}\label{ssec:(2,p)}
Let $D_1$ and $D_2$ be disjoint disks in a plane.  
Let $b_1,\ldots,b_p$ be pairwise disjoint bands, each with a left hand half twist, connecting the two disks. 
Set $F:=D_1\cup D_2\cup b_1\cup\cdots\cup b_p$. 
Then $F$ is a fiber surface for the $(2,p)$-torus link $T(2,p)$ (with parallel orientation if $p$ is even) (see Figure \ref{fig:torussurface}). 
\begin{figure}[h]
\begin{center}
\includegraphics[width=10cm]{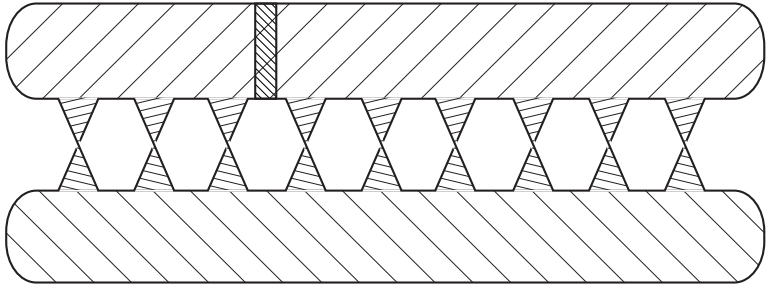}
\caption{Fiber surface for $(2,9)$-torus knot}
\label{fig:torussurface}
\end{center}
\end{figure}
Let $b$ be a band in $F$, and set $F':=\overline{F\rmv b}$, $L':=\partial F'$. 
Since $F$ is obtained from $D_1\cup D_2\cup b_1\cup\cdots\cup b_{p-1}$ by plumbing a Hopf annulus with a left hand twist along $b_{p-1}$ (or its spanning arc), 
$F$ is obtained from a disk $D_1\cup D_2\cup b_1$ by successively plumbing $(p-1)$ Hopf annuli with left hand twists. 
Then, by Corollary \ref{cor:Hopfplumbing}, $L'$ is fibered if and only if $F$ is a Hopf banding of $F'$ along $b$.

\begin{cor}\label{cor:2torus}
Suppose $L'$ is obtained from $L=T(2,p)$ by a coherent band surgery along $b$, where $p\ge 2$, and $\chi(L')>\chi(L)$.  
Then $L'$ is fibered if and only if the band $b$ can be moved into $D_1$ (and also $D_2$) 
so that 
$F \rmv b$ is connected.
In particular, if we assume $L'$ is a prime (resp. composite) fibered link, 
then $L'$ is $T(2,p-1)$ (resp. a connected sum $T(2,p_1)\# T(2,p_2)$ of $T(2,p_1)$ and $T(2,p_2)$,  
where $p_1$ and $p_2$ are positive integers with $p_1,p_2>1$ and $p_1+p_2=p$). 
Moreover,
the band is unique up to isotopy fixing $L$ as a set if $L'=T(2,p-1)$ or $T(2,p_1)\# T(2,p_2)$ and either $p_1$ or $p_2$ is odd, 
and there are two bands up to isotopy fixing $L$ as a set if both $p_1$ and $p_2$ are even 
($L'$ is a $3$-component link), 
but they are the same up to homeomorphism.
\end{cor}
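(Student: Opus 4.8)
The plan is to funnel the whole statement through the reduction already established in this subsection. By Corollary~\ref{cor:Hopfplumbing}, applied to the standard fiber surface $F = D_1 \cup D_2 \cup b_1 \cup \cdots \cup b_p$, the link $L'$ is fibered if and only if $F$ is a Hopf banding of $F'$ along $b$; by Theorem~\ref{thm:Hopfband} this holds exactly when the spanning arc $\alpha$ of $b$ is clean and alternating. Since $\partial F = T(2,p)$ is prime and the monodromy of $F$ is a composition of left-hand Dehn twists, Lemma~\ref{lem:alternating} and the remark following it guarantee that every essential arc in $F$ is alternating. Hence the fibering condition simplifies to the single requirement that $\alpha$ be clean. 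I would first note that the hypothesis $\chi(L') > \chi(L)$ cannot improve to $\chi(L') > \chi(L)+1$ for an essential band: by Theorem~\ref{thm:4equiv} that inequality would force $\alpha$ to be clean, non-alternating, and not fixed by $h$, which is incompatible with every essential arc being alternating. Thus it suffices to work in the regime $\chi(L') = \chi(L)+1$ and to classify clean essential arcs in $F$.

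Next I would give the geometric classification of clean arcs. The goal is to show that, up to isotopy, a clean essential arc is precisely a chord of the disk $D_1$ (and dually of $D_2$), and I would prove this by a standard innermost-disk and outermost-arc analysis of $\alpha$ against the product-disk pattern of the bands $b_i$, sliding $\alpha$ off the bands until it lies in one disk. Such a chord separates the attaching regions of $b_1, \dots, b_p$, which appear in cyclic order along $\partial D_1$, into two nonempty contiguous blocks of sizes $p_1$ and $p_2$ with $p_1 + p_2 = p$. Because the two resulting sub-disks of $D_1$ are still joined through the common disk $D_2$, the surface $F' = \overline{F \rmv b}$ is connected; conversely, any band that can be moved into $D_1$ with $F \rmv b$ connected is visibly of this chord type, hence clean. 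This yields the stated equivalence: $L'$ is fibered if and only if $b$ can be moved into $D_1$ (and $D_2$) so that $F \rmv b$ is connected.

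I would then identify the resulting link. Given the contiguous splitting into blocks of sizes $p_1$ and $p_2$, I would produce a decomposing $2$-sphere meeting $L'$ in two points whose intersection with $F'$ is a single arc crossing $D_2$, separating the $p_1$-block (together with one half of $D_1$ and the disk $D_2$) from the $p_2$-block. This exhibits $F'$ as a boundary connected sum of the standard fiber surfaces of $T(2,p_1)$ and $T(2,p_2)$, so $L' = T(2,p_1) \# T(2,p_2)$. When one block consists of a single band, $p_i = 1$, that factor is $T(2,1)$, the unknot, and $L' = T(2,p-1)$, which is prime; when both blocks contain at least two bands, $L'$ is a nontrivial connected sum and hence composite. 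This shows the two alternatives are exhaustive and is consistent with the Euler characteristic bookkeeping $\chi(L') = 3 - p$.

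Finally, for the uniqueness statement I would count the chords producing a fixed target, that is, a fixed unordered partition $\{p_1, p_2\}$, modulo ambient isotopies fixing $L$ setwise. A chord is recorded by the unordered pair of gaps (between consecutive bands) containing its endpoints, and the relevant symmetry group of $(S^3, T(2,p))$ is generated by the reflection reversing the row of bands, the involution exchanging $D_1$ and $D_2$, and the period-$p$ rotation. When $p_1 \ne p_2$ the gap-pairs realizing this split form a single orbit, so the band is unique up to isotopy fixing $L$; this covers $T(2,p-1)$ and every connected sum with an odd factor. When $p_1 = p_2$ (which forces both even and makes $L'$ a $3$-component link), the diametric gap-pairs split into two classes that are not related by any isotopy preserving $L$ setwise but are interchanged by a symmetry homeomorphism, giving exactly two bands that agree up to homeomorphism. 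I expect this last count to be the main obstacle: it requires pinning down exactly which symmetries of the chiral link $T(2,p)$ are realized by $L$-preserving isotopies as opposed to mere homeomorphisms, and verifying that the two balanced chords are genuinely non-isotopic yet homeomorphic.
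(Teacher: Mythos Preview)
Your reduction matches the paper: via Corollary~\ref{cor:Hopfplumbing} and Theorem~\ref{thm:Hopfband}, together with Lemma~\ref{lem:alternating} and the primeness of $T(2,p)$, the whole question collapses to classifying clean essential arcs in the standard fiber $F$, and once such an arc sits in $D_1$ the identification $L'=T(2,p_1)\#T(2,p_2)$ is immediate. Two points, however, are genuine gaps.

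\textbf{The core step is not ``standard''.} You assert that a clean arc can be slid off the bands by an innermost/outermost argument, but cleanness is a condition on $\alpha\cap h(\alpha)$, not directly on how $\alpha$ sits relative to $b_1,\dots,b_p$. An outermost sub-arc of $\alpha$ in $D_2$ joining $b_j$ to $b_k$ with $|k-j|>1$ has no evident slide; something must forbid this configuration, and that something is precisely the cleanness hypothesis filtered through the explicit monodromy. The paper carries this out: it replaces $h$ by the slid automorphism $\widehat h$ (with $\widehat h(b_i)=b_{i+1}$ and $\widehat h(D_1)=D_2$), notes that clean-and-alternating forces $\alpha\cap\widehat h(\alpha)=\emptyset$, and then uses this disjointness to prove the adjacency constraint $j-i\equiv\pm1$ or $k-j\equiv\pm1\pmod p$ on consecutive sub-arcs, propagating it until a removable intersection with some $b_j$ appears. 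This combinatorial analysis is the substance of the proof; it does not fall out of a routine outermost-disk argument, and you would need to supply it.

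\textbf{The uniqueness dichotomy is wrong.} You split cases by $p_1=p_2$ versus $p_1\neq p_2$ and assert that $p_1=p_2$ forces both even. This is false ($p=6$, $p_1=p_2=3$), and the dichotomy does not match the statement ($p=6$, $(p_1,p_2)=(2,4)$ has both even yet unequal, and the corollary claims two bands here). The correct invariant is which component(s) of $L$ the band meets, and that is governed by parity of $p_1,p_2$, not by whether they coincide. The paper's argument is that two bands attached to the \emph{same} component of $L$ are related by iterates of the monodromy together with sliding along $\partial F$, hence by an isotopy fixing $L$ as a set; bands attached to different components require in addition the $\pi$-rotation exchanging $D_1$ and $D_2$, which yields equivalence only up to homeomorphism. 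Your symmetry-orbit approach could in principle be salvaged, but as written it tracks the wrong invariant and rests on an incorrect parity claim.
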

\begin{rem}
By Murasugi \cite{MurOCNILT}, $|\sigma(L)-\sigma(L')|\le 1$ for two links $L,L'$ which are related by a coherent band surgery, 
where $\sigma$ means the signature. 
Since $\chi(T(2,p))=2-p$ and $\sigma(T(2,p))=1-p$, 
$\chi(L)+1=\sigma(L)\ge\sigma(L')-1\ge\chi(L')$ 
if $L=T(2,p)$.
Then the assumption $\chi(L')>\chi(L)$ in Corollary \ref{cor:2torus} becomes $\chi(L')=\chi(L)+1$.
Remark that we can regard $T(2,p-1)$ as $T(2,p_1)\#T(2,p_2)$ for $p_1=p-1$ and $p_2=1$ since $T(2,1)$ is trivial.
\end{rem}

\begin{proof}
Suppose that $b$ is contained in $F$, disjoint from $b_1,\ldots,b_p$, and does not separate $F$.
We will prove that $F'$ is fibered, and that the band is unique up to the operations mentioned.
Say $b$ is contained in $D_1$, 
and $b$ splits $D_1$ into two disks with $p_i$ bands of $b_1,\ldots,b_p$ ($i=1,2$), 
where $p_1$ and $p_2$ are positive integers with $p_1+p_2=p$. 
Then $L'$ is a connected sum $T(2,p_1)\# T(2,p_2)$ of $T(2,p_1)$ and $T(2,p_2)$ which is a fibered link. 
Moreover two such bands in $F$ are related by the monodromy and sliding along $\partial F$ 
if the two bands are attached to the same component of $L$. 
This implies that the band is unique up to isotopies fixing $L$ as a set if either $p_1$ or $p_2$ is odd. 
If the two bands are attached to different components of $L$, they are related by the monodromy, sliding along $\partial F$, 
and an involution. 
Here we can take a rotation about the horizontal axis in Figure \ref{fig:torussurface} as the involution 
so that $D_1$ is mapped to $D_2$,  $D_2$ is mapped to $D_1$, and $b_i$ is mapped to itself. 
This implies that the two bands are the same up to homeomorphism.

Conversely, let $\alpha$ be a clean and alternating arc in $F$. 
We will show that $\alpha$ can be moved  into $D_1$ or $D_2$ so that $\alpha$ is disjoint from $b_1,\ldots,b_p$. 
This will show that any band producing a fibered link $L'$ can be moved into $D_1$ or $D_2$
by Corollary \ref{cor:Hopfplumbing} and Theorem \ref{thm:Hopfband}.
We arrange the bands $b_1,\ldots, b_p$ along an orientation of $\partial D_1$ (or $\partial D_2$).
For each $i\in\{1,2\}$ and $j\in \{1,\ldots,p\}$, let $\delta_{ij}=\partial D_i\cap\partial b_j$ be an arc with the orientation induced by that of $D_j$.  Note that $\delta _{1j}$ and $\delta _{2j}$ are isotopic to each other in $F$ but having opposite orientations.  
It is well known that the monodromy $h$ of $F$ is represented by $t_1\circ t_2\circ\cdots\circ t_{p-1}$, where $t_i$ is a Dehn twist along a loop in $F$ passing only once through each of $b_i, D_1, b_{i+1}$, and $D_2$. 
Then we can see that $h(\delta_{1j})$ is isotopic to $\delta_{2(j+1)}$ (similarly, $h(\delta_{2j})$ is isotopic to $\delta_{1(j+1)}$), including the orientation, by sliding to the left hand side along $\partial F$. 
Let $\widehat{h}$ be an automorphism of $F$ such that 
$\widehat{h}(D_1)=D_2$, $\widehat{h}(D_2)=D_1$, 
and $\widehat{h}(b_i)=b_{i+1} (\mbox{mod } p )$.
Then $\widehat{h}$ is obtained from $h$ by sliding to the left hand side along $\partial F$.
Since $\alpha$ and $h(\alpha)$ intersect only at their endpoints with positive signs, 
$\alpha$ is disjoint from $\widehat{h}(\alpha)$. 
We may assume that $\alpha$ minimizes the number of  intersections with $int (b_1\cup\cdots\cup b_p)$,  
and $\partial\alpha$ consists of two points of $(\partial \delta_{11}\cup\cdots\cup\partial \delta_{1p})\cup(\partial \delta_{21}\cup\cdots\cup\partial \delta_{2p})$.  
For a contradiction, suppose $\alpha$ intersects $int (b_1\cup\cdots\cup b_p)$.
Then $\alpha$ is divided into arcs by cutting $F$ along $b_1\cup\cdots\cup b_p$. 
Let $\alpha_1,\alpha_2$ be successive such arcs in $D_1,D_2$ respectively, and define the following (see Figure \ref{fig:arcsinF}):\\
(1) $\partial \alpha_1=\{x,y\}$, where $x$ is a point in $\delta_{1i}$ and $y$ is a point in $\delta_{1j}$.\\
(2) $\partial \alpha_2=\{z,w\}$, where $z$ is a point in $\delta_{2j}$ and $w$ is a point in $\delta_{2k}$.\\
(3) A component of $\alpha\cap b_j$ connects $y$ and $z$ in $b_j$.\\
\begin{figure}[h]
\begin{center}
\includegraphics[width=4in]{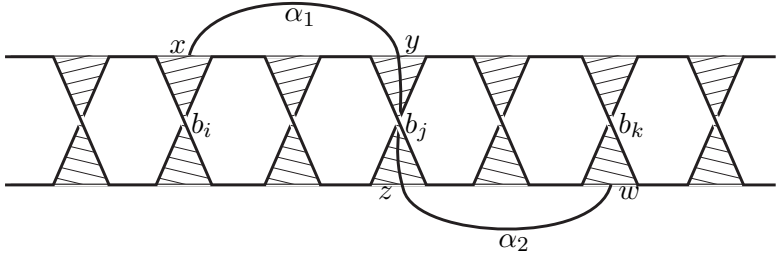}

\begin{picture}(400,0)(0,0)
\put(125,50){$b_{i}$}
\put(205,50){$b_{j}$}
\put(285,50){$b_k$}
\put(117,80){$x$}
\put(205,82){$y$}
\put(195,25){$z$}
\put(285,25){$w$}
\put(160,92){$\alpha_1$}
\put(240,7){$\alpha_2$}
\end{picture}
\caption{If the arc $\alpha$ intersects the bands $b_1 \cup \cdots \cup b_p$, then it is divided into sub-arcs by cutting along the bands.}
\label{fig:arcsinF}
\end{center}
\end{figure}
Set $\beta_1:=\widehat{h}(\alpha_1),\beta_2:=\widehat{h}(\alpha_2)$, 
$x':=\widehat{h}(x),y':=\widehat{h}(y),z':=\widehat{h}(z),w':=\widehat{h}(w)$. 
Then $x',y',z',w'$ are points in $\delta_{2(i+1)},\delta_{2(j+1)},\delta_{1(j+1)},\delta_{1(k+1)}$ respectively.

First we show that $j-i\equiv\pm1$ (mod $p$) or $k-j\equiv\pm1$ (mod $p$). 
Suppose $k-j\not\equiv\pm1$ (mod $p$). 
Let $D_1'$  (resp., $D_2'$) be a disk cut off from $D_1$ by $\beta_2$ (resp., $D_2$ by $\alpha_2$),  
where $\partial D_1'\cap(b_1\cup\cdots\cup b_p)=(\partial D_1'\cap \delta_{1(k+1)})\cup(\delta_{1(k+2)}\cup\cdots\cup\delta_{1j})\cup(\partial D_1'\cap \delta_{1(j+1)})$
(resp., $\partial D_2'\cap(b_1\cup\cdots\cup b_p)=(\partial D_2'\cap \delta_{2j})\cup(\delta_{2(j+1)}\cup\cdots\cup\delta_{2(k-1)})\cup(\partial D_2'\cap \delta_{1k})$).
Since $\alpha_1$ is disjoint from $\beta_2$ in $D_1$, 
two points $x$ and $y$ both lie in $\partial D_1'$,  
and so $i\equiv k+1,k+2,\ldots, j-1$, or $j+1$ (mod $p$). 
On the other hand, since $\alpha_2$ is disjoint from $\beta_1$ in $D_2$, 
two points $x'$ and $y'$ both lie in $\partial D_2'$, 
and so $i+1\equiv j,j+2,j+3,\ldots, k-1$, or $k$ (mod $p$). 
Then $j-i\equiv\pm1$ (mod $p$).

Next we show that if $\alpha_1$ is outermost in $D_1$ and $j-i\equiv 1$ (mod $p$),
then $\alpha_2$ is outermost in $D_2$ and $k-j\equiv 1$ (mod $p$).  
Similarly, if $\alpha_2$ is outermost in $D_2$ and $k-j\equiv -1$ (mod $p$),
then $\alpha_1$ is outermost in $D_1$ and $j-i\equiv -1$ (mod $p$). 
Suppose that $\alpha_1$ is outermost in $D_1$ and $j=i+1$ ($j=1$ if $i=p$) . 
Then $\beta_1$ connects a point $x'$ in $\delta_{2(i+1)}$ 
and a point $y'$ in $\delta_{2(i+2)}$. 
Recall that $z$ is a point in $\delta_{2(i+1)}$.
Since $\alpha_1$ is outermost in $D_1$, $z$ lies in the side of $\beta_1$ containing no $\delta_{2l}$'s, 
and so $\alpha_2$ is parallel to $\beta_1$ and is outermost in $D_2$. 

Finally we show that this results in a contradiction. 
Suppose $\alpha$ has a sub-arc $\alpha'$ which is outermost in $D_1$ or $D_2$ 
and connecting two adjacent bands. 
By continuing the same argument above, we may assume that 
the outermost sub-arc $\alpha_2$ of $\alpha$ is outermost 
in $D_1$ or $D_2$, say $D_2$, and $k-j\equiv 1$ (mod $p$).
Since $\widehat{h}(\alpha)$ passes through $b_{j+1}$, there exists a sub-arc $\ell=\ell_1\cup\ell_2$ of $L$  
such that $\ell_1$ and $\ell_2$ are components of $L\cap\partial b_j$ and $L\cap\partial D_2$ respectively, 
and $\ell_2\cap b_{j+1}=\partial\alpha_2\cap\partial\delta_{2(j+1)}$ is an endpoint of $\alpha$.
Then an arc component of $\alpha\cap int(b_j)$ is removable by sliding $\alpha$ along $\ell$. 
In the case where $\widehat{h}(\alpha)$ has a sub-arc which is outermost in $D_1$ or $D_2$ 
and connecting two adjacent bands, by the same argument, $\widehat{h}(\alpha)$ (and so does $\alpha$)
has a removable intersection with $int(b_1\cup\cdots\cup b_p)$. 
This contradicts the assumption that $\alpha$ minimizes the number of intersections with $int(b_1\cup\ldots\cup b_p)$. 
\end{proof}

\subsection{Band surgeries on composite fibered links}\label{ssec:composite}

We say that a fiber surface is {\em prime} (resp., {\em composite}) if  the boundary is a prime link 
(resp., a composite link).
Suppose $F$ is a composite fiber suface.
There exists a $2$-sphere $S$ intersecting $F$ in an arc, 
such that neither surface cut off from $F$ by the arc is a disk. 
The resulting surfaces are both fiber surfaces
for the summand links. 
In general, there exist pairwise disjoint $2$-spheres $S_1,\ldots,S_m$ 
such that $\delta_i:=S_i\cap F$ is an arc for each $i\in\{1,\ldots,m\}$, 
and each component of the surface obtained from $F$ by cutting along $\delta_1,\ldots,\delta_n$ 
is a prime fiber surface. 
We call a set $\{\delta_1,\ldots,\delta_m\}$ of such arcs a {\em full prime decomposing system} for $F$. 
We remark that  if $m=1$, a full prime decomposing system (an arc in this case) is unique up to isotopy in $F$. 
On the other hand, there may exist several decomposing systems 
if $m\ge2$, 
but the sets of surfaces obtained from $F$ by cutting along decomposing systems are always the same. 

Suppose a fiber surface $F$ is divided into 
prime fiber surfaces $F_1,\ldots,F_{m+1}$, 
and a properly embedded arc $\alpha$ in $F$ is divided minimizingly into sub-arcs $\alpha_1,\ldots, \alpha_n$ successively by 
a full prime decomposing system $\{\delta_1,\ldots,\delta_m\}$, 
where $\alpha_i$ is a properly embedded arc in $F_{j_i}$ for each $i\in\{1,\ldots,n\}$ and 
$\{p_i\}=\alpha_i\cap\alpha_{i+1}\subset\partial\alpha_i,\partial\alpha_{i+1}$ for each $i\in\{1,\ldots,n-1\}$. 
Let $s_i,t_{i+1}=\pm1$ be the signs at $p_i$ for a pair $(\alpha_i,h_{j_i}(\alpha_i))$  in $F_{j_i}$ 
and for a pair $(\alpha_{i+1}, h_{j_{i+1}}(\alpha_{i+1}))$ in $F_{j_{i+1}}$ respectively. 
Remark that $i_{\partial}(\alpha_i,h_{j_i}(\alpha_i))=\frac{t_i+s_i}{2}$ for each $i\in\{2,\ldots,n-1\}$, see \cite{GooOOBSA}. 
Then we have the following.
\begin{lem}\label{lem:dividedarcs}
 $$\rho(\alpha)=\sum_{i=1}^n\rho(\alpha_i)+\frac{1}{2}\sum_{i=1}^{n-1}|s_i+t_{i+1}|$$
Here if $h_{j_i}(\alpha_i)$ is isotopic to $\alpha_i$ in $F_{j_i}$, 
$(t_i,s_i)=(1,-1)$ or $(-1,1)$ 
which minimizes $\sum_{i=1}^{n-1}|s_i+t_{i+1}|$.
\end{lem}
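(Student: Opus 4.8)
The plan is to compute $\rho(\alpha)=\rho(\alpha,h(\alpha))$ by placing $\alpha$ and $h(\alpha)$ in minimal position and then sorting the interior intersection points according to where they sit relative to the decomposing system $\{\delta_1,\dots,\delta_m\}$. First I would record the structural fact that, because $F$ is a fiber surface realizing the prime decomposition along $\{\delta_1,\dots,\delta_m\}$, the monodromy $h$ may be chosen to fix each $\delta_i$ pointwise and to restrict on each prime piece $F_j$ to its own monodromy $h_j$. Consequently $h(\alpha)$ is the concatenation $h_{j_1}(\alpha_1)\cup\cdots\cup h_{j_n}(\alpha_n)$, with $h_{j_i}(\alpha_i)$ and $\alpha_i$ sharing the endpoints $p_{i-1},p_i$ on $\partial F_{j_i}$. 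This reduces the count to two regimes: intersections interior to a prime piece, and intersections concentrated in a neighborhood of a decomposing arc near a junction point $p_i$.

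The concrete heart of the argument is the junction count at each $p_i$. I would set up local coordinates with $\delta_i$ horizontal and $\alpha$ vertical through $p_i$, $F_{j_i}$ above and $F_{j_{i+1}}$ below. Since $h_{j_i}(\alpha_i)$ ends at $p_i$ emanating to the side of $\alpha_i$ recorded by $s_i$, and $h_{j_{i+1}}(\alpha_{i+1})$ starts at $p_i$ emanating to the side recorded by $t_{i+1}$, whether $h(\alpha)$ is forced to cross $\alpha$ near $p_i$ is decided by comparing these two sides across $\delta_i$. The key point is that the co-orientation of $\delta_i$ as a piece of $\partial F_{j_i}$ is opposite to its co-orientation as a piece of $\partial F_{j_{i+1}}$, so equality of the signs $s_i=t_{i+1}$ corresponds to $h(\alpha)$ lying on opposite geometric sides of $\alpha$ across $\delta_i$, forcing exactly one transverse crossing, while $s_i=-t_{i+1}$ lets $h(\alpha)$ stay on one side with no crossing. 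This is precisely $\tfrac12|s_i+t_{i+1}|$, and summing over $i$ produces the second term.

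Next I would assemble the global count and establish additivity of the interior contributions: after arranging $\alpha$ and $h(\alpha)$ to be standard near every $\delta_i$ and minimal inside each prime piece, the global configuration should be minimal, so that $\rho(\alpha)$ equals $\sum_i\rho(\alpha_i)$ plus the junction total. I expect this minimality statement to be the main obstacle, and I would attack it with a bigon/innermost-disk argument: any bigon between $\alpha$ and $h(\alpha)$ either lies inside a single prime piece, contradicting piece-wise minimality, or it meets some $\delta_i$; since the decomposing spheres are essential and meet $F$ in a single arc, a neighborhood of $\delta_i$ is a product region fixed by $h$, so such a bigon can be slid off $\delta_i$ and removed, reducing to the previous case. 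The same argument controls the delicate case of two sub-arcs $\alpha_i,\alpha_{i'}$ revisiting a common prime piece: any interior crossing between $\alpha_i$ and $h_{j_i}(\alpha_{i'})$ either bounds a removable bigon inside the piece or localizes to a junction neighborhood where it is already counted, so no spurious cross terms survive. Ruling out essential interior cross-intersections from such revisits is the step I anticipate being hardest; reducing to the single-sphere case $m=1$ by induction on the number of decomposing spheres may streamline it.

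Finally I would dispose of the degenerate case in which $h_{j_i}(\alpha_i)$ is isotopic to $\alpha_i$ in $F_{j_i}$. Here $\alpha_i$ is clean and non-alternating, so $i_\partial(\alpha_i,h_{j_i}(\alpha_i))=\tfrac{t_i+s_i}{2}=0$, forcing $s_i=-t_i$; thus the pair $(t_i,s_i)$ is either $(1,-1)$ or $(-1,1)$. Since $h_{j_i}(\alpha_i)$ can be isotoped entirely to either side of $\alpha_i$, we have a single binary choice of this pair, and I would observe that we are free to select whichever of $(1,-1)$, $(-1,1)$ makes the adjacent junction contributions $|s_{i-1}+t_i|$ and $|s_i+t_{i+1}|$ as small as possible. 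This is exactly the minimizing convention asserted in the statement, and completes the computation of $\rho(\alpha)$.
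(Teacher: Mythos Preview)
Your construction of the upper bound and your local analysis at the junction points $p_i$ match the paper's argument closely. The genuine divergence, and the gap, is in how you certify minimality.

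The paper does \emph{not} argue minimality by a bigon argument in $F$. Instead, for the inequality $\rho(\alpha)\ge\sum_i\rho(\alpha_i)+\tfrac12\sum|s_i+t_{i+1}|$, it takes $\beta\sim h(\alpha)$ already in minimal position with $\alpha$ and uses the three--dimensional product disk $D$ with $D\cap F=\partial D=\alpha\cup\beta$, then analyzes $D\cap(S_1\cup\cdots\cup S_m)$, the intersection with the decomposing \emph{spheres} (not just the arcs $\delta_i$). After an innermost/outermost clean-up on $D$, the arcs of $D\cap(\bigcup S_i)$ are forced to run parallel in $D$, cutting $D$ into subdisks $D_1,\dots,D_n$ whose boundaries pick out subarcs $\beta_i$ of $\beta$ with $\beta_i$ isotopic to $h_{j_i}(\alpha_i)$ in $F_{j_i}$. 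This is precisely what yields $|int(\alpha_i)\cap int(\beta_i)|\ge\rho(\alpha_i)$ for each $i$ and pins down the junction contribution.

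Your surface-level bigon argument does not give you this. When $\beta$ is isotoped to be minimal with $\alpha$, you have no a priori reason that $\beta$ meets each $\delta_i$ in the same pattern as $\alpha$, so there is no canonical splitting $\beta=\beta_1\cup\cdots\cup\beta_n$ with $\beta_i$ in the isotopy class of $h_{j_i}(\alpha_i)$. The step you flag as hardest---ruling out essential interior intersections between $\alpha_i$ and $h_{j_i}(\alpha_{i'})$ when $\alpha$ revisits a prime piece---is exactly where a purely $2$-dimensional argument stalls: such a crossing need not bound any bigon in $F_{j_i}$, nor need it ``localize to a junction neighborhood.'' The paper sidesteps this entirely because the spheres $S_i$ cut the ambient disk $D$ rather than the surface $F$; incompressibility of $F$ then forces the clean parallel pattern. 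The missing idea in your proposal is this disk--versus--sphere intersection in the $3$--manifold.
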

\begin{proof}
First we will show that $\rho(\alpha)\le\sum_{i=1}^n\rho(\alpha_i)+\frac{1}{2}\sum_{i=1}^{n-1}|s_i+t_{i+1}|$.
We can take the monodromies $h,$ $h_1,\ldots,h_{m+1}$ of $F$, $F_1,\ldots,F_{m+1}$ respectively,  
and $\alpha_1,\ldots,\alpha_n$ 
so that $h|_{F_j}=h_j$ 
and $|int(\alpha_i)\cap int(h(\alpha_i))|=\rho(\alpha_i)$.
By moving $h$ slightly at $p_i$ if $s_i+t_{i+1}=0$, then, we have 
$\rho(\alpha)\le|int(\alpha)\cap int(h(\alpha))|=\sum_{i=1}^n\rho(\alpha_i)+\frac{1}{2}\sum_{i=1}^{n-1}|s_i+t_{i+1}|$.

Next we will show that $\rho(\alpha)\ge\sum_{i=1}^n\rho(\alpha_i)+\frac{1}{2}\sum_{i=1}^{n-1}|s_i+t_{i+1}|$. 
Put $\beta:=h(\alpha)$  
so that $|int(\alpha)\cap int(\beta)|=\rho(\alpha)$. 
Then there exists a disk $D$, possibly with self intersection in the boundary, 
such that $D\cap F=\partial D=\alpha\cup\beta$. 
We analyze the intersection of $D$ and the pairwise disjoint spheres $S_1,\ldots,S_m$, where $S_i\cap F=\delta_i$ for each $i\in\{1,\ldots,m\}$. 
By a cut and paste argument, we may assume that the intersection $D\cap (S_1\cup\cdots\cup S_m)$ consists of arcs. 
Let $D'$ be an outermost disk cut off from $D$ by $D\cap S_i$ for some $i\in\{1,\dots,m\}$. 
Suppose $\partial D'\cap\partial D\subset \alpha$ or $\partial D'\cap\partial D\subset \beta$. 
Since $F$ is incompressible, $\partial D'\cap\partial D$ is isotopic in $F$ to 
a sub-arc of $\delta_i$ joining the end points of the arc of $D\cap S_i$. 
Hence such an arc of intersection $D\cap S_i$ is removable keeping $|int(\alpha)\cap int(\beta)|$ constant. 
After removing such intersections, $D$ is divided into disks $D_1,\ldots,D_n$ by $D\cap (S_1\cup\cdots\cup S_m)$,  
where $\partial D_i$ consists of $\alpha_i$, a sub-arc $\beta_i$ of $\beta$,
and two parallel arcs (resp., a single arc) of $D\cap (S_1\cup\cdots\cup S_m)$  for each $i\in\{2,\ldots,n-1\}$ (resp., $i\in\{1,n\}$) (see Figure \ref{fig:DcapS}). 
This implies that $h_{j_i}(\alpha_i)$ is isotopic to $\beta_i$. 
By sliding $\beta_i$ along $\partial F_{j_i}$ in $F_{j_i}$ so that the end points of $\beta_i$ coincide with those of $\alpha_i$, 
we have 
$\rho(\alpha)=|int(\alpha)\cap int(\beta)|=\sum_{i=1}^n|int(\alpha_i)\cap int(\beta_i)|+\frac{1}{2}\sum_{i=1}^{n-1}|s_i+t_{i+1}|\ge\sum_{i=1}^n\rho(\alpha_i)+\frac{1}{2}\sum_{i=1}^{n-1}|s_i+t_{i+1}|$. 
 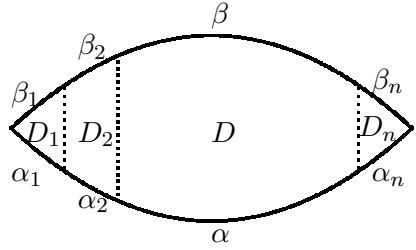
\begin{figure}[h]
\begin{center}
\begin{picture}(150,80)(0,0)
\put(5,25){$D_1$}
\put(25,25){$D_2$}
\put(130,27){$D_n$}
\put(0,10){$\alpha_1$}
\put(0,40){$\beta_1$}
\put(25,0){$\alpha_2$}
\put(25,58){$\beta_2$}
\put(135,10){$\alpha_n$}
\put(135,45){$\beta_n$}
\put(75,-13){$\alpha$}
\put(75,70){$\beta$}
\put(75,25){$D$}
\linethickness{0.3mm}
\qbezier[10](20,15)(20,30)(20,45)
\qbezier[10](130,15)(130,30)(130,45)
\qbezier[20](40,5)(40,30)(40,55)
\linethickness{0.3mm}
\qbezier(0,30)(75,-40)(150,30)
\qbezier(0,30)(75,100)(150,30)
\linethickness{0.5mm}
\end{picture}
\end{center}
\caption{The disk $D$ is divided into sub-disks by the arcs of intersection with the prime decomposing spheres $S_1 \cup \cdots \cup S_m$.}
\label{fig:DcapS}
\end{figure}
\end{proof}

By Lemma \ref{lem:alternating} and Lemma \ref{lem:dividedarcs}, we have the following.
\begin{thm}\label{thm:connectedsum+-}
Suppose that a fiber surface $F$ is divided into fiber surfaces $F_1,\ldots,F_{m+1}$, 
and a properly embedded arc $\alpha$ in $F$ is divided minimizingly into sub-arcs $\alpha_1,\ldots, \alpha_n$ successively by 
a full prime decomposing system $\{\delta_1,\ldots,\delta_m\}$, 
where  
the monodromy $h_j$ of $F_j$ is a composition of right hand Dehn twists
or left hand Dehn twists according to whether $\varepsilon_j=+$ or $-$,
and $\alpha_i$ is a properly embedded arc in $F_{j_i}$. 
Then the following holds:

\begin{enumerate}[\rm(1)]
\item The arc $\alpha$ is clean and alternating if and only if 
the set $\{1,\ldots,n\}$ is partitioned into two sets $A$ and $B$, with $A$ consisting of an odd number of elements, so that
$\alpha_i$ is clean and alternating in $F_{j_i}$ 
and $\varepsilon_{j_i}$ appears as $+$ and $-$ alternately in ascending order for $i\in A$, 
and $\alpha_i$ is parallel to the boundary $\partial F_{j_i}$ in $F_{j_i}$ for any $i\in B$.

\item The arc $ \alpha$ is  once-unclean and non-alternating if and only if either:

\begin{enumerate}[\rm(2-1)]
\item the set $\{1,\ldots,n\}$ is partitioned into two sets $A$ and $B$, with $A$ consisting of an even number of elements, so that 
$\alpha_i$ is clean and alternating in $F_{j_i}$ 
except for one once-unclean alternating arc
and $\varepsilon_{j_i}$ appears as $+$ and $-$ alternately in ascending order for $i\in A$, 
and $\alpha_i$ is parallel to the boundary $\partial F_{j_i}$ in $F_{j_i}$ for any $i\in B$, or

\item the set $\{1,\ldots,n\}$ is partitioned into two sets $A$ and $B$, with $A$ consisting of an odd number of elements, so that 
$\alpha_i$ is clean and alternating in $F_{j_i}$ 
and $\varepsilon_{j_i}$ appears as $+$ and $-$ alternately except one successive pair
in ascending order for $i\in A$, 
and $\alpha_i$ is parallel to the boundary $\partial F_{j_i}$ in $F_{j_i}$ for any $i\in B$.
\end{enumerate}
\end{enumerate}
\end{thm}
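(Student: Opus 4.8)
The plan is to express the two defining quantities of $\alpha$, namely $\rho(\alpha)$ and $i_\partial(\alpha)$, in terms of the pieces $\alpha_1,\dots,\alpha_n$ and then to extract the combinatorial conditions from a sign-bookkeeping argument along the decomposition. First I would feed each piece into Lemma~\ref{lem:alternating}. Since $h_{j_i}$ is a composition of right-handed (resp.\ left-handed) Dehn twists when $\varepsilon_{j_i}=+$ (resp.\ $-$), each $\alpha_i$ is either essential in $F_{j_i}$, in which case it is alternating with $i_\partial(\alpha_i,h_{j_i}(\alpha_i))=\varepsilon_{j_i}$ and hence $t_i=s_i=\varepsilon_{j_i}$, or it is parallel to $\partial F_{j_i}$, in which case $\rho(\alpha_i)=0$, $i_\partial(\alpha_i,h_{j_i}(\alpha_i))=0$, and its sign pair may be taken to be $(t_i,s_i)=(1,-1)$ or $(-1,1)$. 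This sorts $\{1,\dots,n\}$ into the essential indices, which will be $A$, and the boundary-parallel indices, which will be $B$; the primeness of the pieces guarantees there is no third possibility.

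Next I would record the two global identities. Writing $t_1$ and $s_n$ for the signs at the two endpoints of $\alpha$ on $\partial F$ (the outer endpoints of $\alpha_1$ and $\alpha_n$), consistency of orientations and $h|_{F_j}=h_j$ give $i_\partial(\alpha)=\tfrac12(t_1+s_n)$, so $\alpha$ is alternating exactly when $t_1=s_n$ and non-alternating exactly when $t_1=-s_n$. For $\rho$ I would apply Lemma~\ref{lem:dividedarcs}, obtaining $\rho(\alpha)=\sum_{i\in A}\rho(\alpha_i)+\tfrac12\sum_{i=1}^{n-1}|s_i+t_{i+1}|$, since the $B$-pieces contribute nothing to the first sum. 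The core of the proof is then a reversal-count. Along $\alpha$ the sign is constant across an $A$-piece ($t_i=s_i$), reverses across a $B$-piece ($t_i=-s_i$), and at each junction the term $\tfrac12|s_i+t_{i+1}|$ is $0$ or $1$ according to whether the sign reverses there or not. Choosing the free $B$-signs to minimize the junction sum, I would show that this minimum equals the number $d$ of \emph{defects}, that is, consecutive essential indices (in ascending order, ignoring intervening $B$-indices) with equal $\varepsilon$; a segment between two $A$-pieces can be made to reverse at every junction precisely when the two $\varepsilon$'s differ, and otherwise forces exactly one non-reversing junction. Counting the total reversals from $t_1$ to $s_n$ as $|A|-1-d$ modulo the $B$-reversals then shows that $\alpha$ is alternating if and only if $|A|-d$ is odd.

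With these facts the three cases fall out by imposing $i_{total}(\alpha)=1$, equivalently $\rho(\alpha)\le1$ together with the appropriate boundary behavior. If $\rho(\alpha)=0$, the formula forces every $\rho(\alpha_i)=0$ and $d=0$, so the $A$-pieces are clean and alternating and their $\varepsilon$'s strictly alternate; the reversal count then makes $\alpha$ alternating precisely when $|A|$ is odd, which is exactly (1). If $\rho(\alpha)=1$ with $i_\partial(\alpha)=0$, the single unit of intersection arises in exactly one of two ways: from one once-unclean (still alternating) essential piece with $d=0$, forcing $|A|$ even and strict $\varepsilon$-alternation, which is (2-1); or from all pieces clean together with a single defect $d=1$, forcing $|A|$ odd, which is (2-2). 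The converse is the same bookkeeping read backwards: each listed configuration realizes the stated value of $d$, of $\sum_{i\in A}\rho(\alpha_i)$, and of the $|A|-d$ parity, hence the asserted $\rho(\alpha)$ and $i_\partial(\alpha)$.

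I expect the main obstacle to lie entirely in the sign bookkeeping: verifying that the free $B$-signs can genuinely be chosen so that the junction sum attains $d$ and no smaller, that each defect forces exactly one non-reversing junction (neither zero nor two), and that the end runs of $B$-pieces adjacent to $\partial F$ interact correctly with the boundary signs $t_1,s_n$. These are precisely the places where an off-by-one in the reversal parity would blur the distinction between (1), (2-1), and (2-2), so I would treat them with care, checking the end segments and the isolated once-unclean piece separately.
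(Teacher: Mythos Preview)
Your proposal is correct and follows essentially the same route as the paper: both arguments feed each piece into Lemma~\ref{lem:alternating} to obtain the dichotomy (alternating with $t_i=s_i=\varepsilon_{j_i}$ versus boundary-parallel with free sign pair), use primeness to make the dichotomy exhaustive, partition into $A$ and $B$, and then apply Lemma~\ref{lem:dividedarcs} together with sign bookkeeping to read off $\rho(\alpha)$ and $i_\partial(\alpha)$. Your explicit reversal count and defect number $d$ are a cleaner repackaging of what the paper does more tersely by simply asserting that the $B$-pieces can be ignored and that $i_\partial(\alpha)$ is determined by the first and last elements of $A$; the parity statement ``$\alpha$ alternating iff $|A|-d$ odd'' is exactly the mechanism behind the paper's case split, just made visible.
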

\begin{proof}
By Lemma \ref{lem:alternating}, 
$\alpha_i$ is non-alternating ($t_i+s_i=0$) if and only if  $h_{j_i}(\alpha_{j_i})$ is isotopic to $\alpha_i$ in $F_{j_i}$, 
and if $\alpha_i$ is alternating ($t_i+s_i\neq 0$) then $t_i=s_i=\varepsilon_{j_i}$. 
Since $F_{j_i}$ is prime,  
$h_{j_i}(\alpha_{i})$ is isotopic to $\alpha_i$ in $F_{j_i}$ if and only if 
$\alpha_i$ is parallel to the boundary $\partial F_{j_i}$ in $F_{j_i}$.
Partition the set $\{1,\ldots,n\}$ into $A$ and $B$ so that $\alpha_i$ is alternating if $i\in A$, 
and parallel to the boundary $\partial F_{j_i}$ in $F_{j_i}$ if $i\in B$.
Suppose that $i\in A$ and $i+1\in B$ (resp., $i-1\in B$), 
then $\rho(\alpha_{i})=0$, and $(t_i,s_i)$ can be taken as $(\varepsilon_{i+1},-\varepsilon_{i+1})$ 
(resp., $(-\varepsilon_{i-1},\varepsilon_{i-1})$) 
so that $s_i$ and $t_{i+1}$ (resp., $s_{i-1}$ and $t_i$) cancel. 
Hence we can ignore the elements of $B$ when we calculate the the number $\sum_{i=1}^n\rho(\alpha_i)+\frac{1}{2}\sum_{i=1}^{n-1}|s_i+t_{i+1}|$.
We remark that  $i_{\partial}(\alpha,h(\alpha))=\varepsilon_{j_i}+\varepsilon_{j_{i'}}$, where $i$ and $i'$ are the first and the last elements of $A$.

(1) By definition, the arc $\alpha$ is clean and alternating if and only if 
$\rho(\alpha)=0$ and $i_{\partial}(\alpha,h(\alpha))=\pm1$.
By Lemma \ref{lem:dividedarcs}, 
$\rho(\alpha)=0$ if and only if $\alpha_i$ is clean for each $i\in A$, 
and $\varepsilon_{j_i}+\varepsilon_{j_{i'}}=0$ for each pair of successive integers $i$ and $i'$ in $A$.
Then (1) of Theorem \ref{thm:connectedsum+-} holds. 

(2) By the definition, the arc $\alpha$ is once-unclean and non-alternating if and only if 
$\rho(\alpha)=1$ and $i_{\partial}(\alpha,h(\alpha))=0$.
By Lemma \ref{lem:dividedarcs}, 
$\rho(\alpha)=1$ if and only if either: 
(2-1) $\alpha_i$ is clean for $i\in A$ except one once-unclean,
and $\varepsilon_{j_i}+\varepsilon_{j_{i'}}=0$ for a pair of successive integers $i$ and $i'$ in $A$, 
or
(2-2) $\alpha_i$ is clean for $i\in A$. 
and $\varepsilon_{j_i}+\varepsilon_{j_i'}=0$ for a pair of successive integers $i$ and $i'$ in $A$ except for one pair.
Then (2) of Theorem \ref{thm:connectedsum+-} holds. 
\end{proof}

The following corollary is derived from Theorem \ref{thm:connectedsum+-} by considering the case when 
$\varepsilon_1=\cdots=\varepsilon_{m+1}=+$ 
or $\varepsilon_1=\cdots=\varepsilon_{m+1}=-$.

\begin{cor}\label{cor:disjointfromsystem}
Suppose a fiber surface $F$ is composite,
has monodromy which is a composition of right hand Dehn twists
or left hand Dehn twists, 
and that an arc $\alpha$ in $F$ is clean and alternating. 
Then there exists a full prime decomposing system $\{\delta_1,\ldots,\delta_m\}$
such that $\alpha$ is disjoint from $\delta_1\cup\cdots\cup\delta_m$.
\end{cor}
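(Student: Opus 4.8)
The plan is to derive Corollary~\ref{cor:disjointfromsystem} as the clean-and-alternating specialization of Theorem~\ref{thm:connectedsum+-}~(1), exploiting the fact that when all the signs agree ($\varepsilon_1=\cdots=\varepsilon_{m+1}$), the alternating condition on the subarcs becomes impossible to satisfy nontrivially. First I would fix a full prime decomposing system $\{\delta_1,\ldots,\delta_m\}$ for the composite fiber surface $F$ and isotope $\alpha$ so that it meets the system minimally, producing the successive decomposition $\alpha_1,\ldots,\alpha_n$ into subarcs lying in the prime pieces $F_{j_1},\ldots,F_{j_n}$. The goal is to show $n=1$ up to the operations allowed, i.e.\ that $\alpha$ can be isotoped entirely into a single prime summand, hence disjoint from the decomposing spheres.

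The key step is to apply Theorem~\ref{thm:connectedsum+-}~(1): since $\alpha$ is clean and alternating, the index set $\{1,\ldots,n\}$ partitions into $A$ and $B$, where $A$ has an odd number of elements, each $\alpha_i$ for $i\in A$ is clean and alternating in its piece, and crucially the signs $\varepsilon_{j_i}$ must appear as $+$ and $-$ \emph{alternately} in ascending order for $i\in A$, while for $i\in B$ the subarc $\alpha_i$ is boundary-parallel. Here is where the uniform-sign hypothesis bites: if all $\varepsilon_j$ are equal (say all $+$), then an alternating pattern across $A$ forces $|A|\le 1$; combined with the parity constraint that $|A|$ is odd, we get exactly $|A|=1$. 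Thus $\alpha$ has a single alternating subarc and all other subarcs boundary-parallel.

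The remaining work is to convert this combinatorial conclusion into the geometric statement that $\alpha$ is isotopic to an arc missing $\delta_1\cup\cdots\cup\delta_m$. Each boundary-parallel subarc $\alpha_i$ ($i\in B$) can be pushed off across $\partial F_{j_i}$, sliding $\alpha$ along $\partial F$ and across the adjacent decomposing spheres so as to absorb that subarc and reduce the number of intersection points $|\alpha\cap(\delta_1\cup\cdots\cup\delta_m)|$. Iterating this reduction, one removes every subarc except the single genuinely alternating one in $A$, leaving $\alpha$ supported in one prime piece and hence disjoint from the system. I expect the main obstacle to be this final geometric step: one must argue carefully that a boundary-parallel subarc of the minimally-positioned $\alpha$ really does permit an honest isotopy of the whole arc reducing intersections (rather than just an abstract rearrangement of signs), and that this isotopy does not reintroduce intersections elsewhere. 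This is exactly the type of innermost/outermost sliding argument used in the proof of Corollary~\ref{cor:2torus}, and I would model the details on that argument, using the minimality of $|\alpha\cap(\delta_1\cup\cdots\cup\delta_m)|$ to preclude inessential complications.
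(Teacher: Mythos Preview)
Your proposal is correct and follows essentially the same line as the paper: invoke Theorem~\ref{thm:connectedsum+-}(1) with all $\varepsilon_j$ equal to force $|A|=1$, then iteratively eliminate the boundary-parallel subarcs. The one substantive difference is in how the reduction is carried out. You propose to keep the decomposing system fixed and isotope $\alpha$ (sliding endpoints along $\partial F$) to absorb each boundary-parallel end-subarc; the paper instead keeps $\alpha$ fixed and modifies the \emph{system}. When the terminal subarc $\alpha_n$ is boundary-parallel in $F_{m+1}$, cutting off a disk $D$, the paper replaces $\delta_m$ by the arc $\delta'_m$ obtained by pushing $\alpha_n\cup a$ (with $a$ the piece of $\delta_m$ on the non-disk side) slightly into $F'_{m+1}$, producing a new full prime decomposing system $\{\delta_1,\ldots,\delta_{m-1},\delta'_m\}$ with strictly fewer intersections with $\alpha$. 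The two viewpoints are interchangeable by an ambient isotopy of $F$, but the paper's framing directly delivers the system the corollary asks for and neatly sidesteps the obstacle you flagged---reconciling ``$\alpha$ already in minimal position'' with ``now reduce further by isotopy of $\alpha$''---since no isotopy of $\alpha$ is ever performed. Your appeal to the argument of Corollary~\ref{cor:2torus} is somewhat off target: that proof exploits the very specific combinatorics of the $T(2,p)$ fiber, whereas here the disk $D$ supplied by boundary-parallelism of $\alpha_n$ is all that is needed.
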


\begin{proof}
Let $\{\delta_1,\ldots,\delta_m\}$ be a full prime decomposing system for $F$, 
so that $F$ is divided into fiber surfaces $F_1,\ldots,F_{m+1}$ by 
$\{\delta_1,\ldots,\delta_m\}$. 
Suppose that a clean alternating arc $\alpha$ in $F$ is divided minimizingly 
into sub-arcs $\alpha_1,\ldots, \alpha_n$ ($n\ge2$) successively by $\{\delta_1,\ldots,\delta_m\}$, 
{\it i.e.} $|\alpha\cap (\delta_1\cup\cdots\cup\delta_m)|=n-1$. 
Then the monodromy $h_j$ of $F_j$ is a composition of right hand Dehn twists
or left hand Dehn twists according to whether that of $F$ is a composition of right hand Dehn twists
or left hand Dehn twists. 
By Theorem \ref{thm:connectedsum+-} (1),  there exists $k\in \{1,\ldots,n\}$ such that 
$\alpha_k$ is clean and alternating, 
and any other arc $\alpha_i$ ($i\in\{1,\ldots,n\}-\{k\}$) is parallel to $\partial F_{j_i}$ 
in  $F_{j_i}$
({\it i.e.} the set $A$ in Theorem \ref{thm:connectedsum+-} (1) must be a singleton set $\{k\}$ in this case). 
Without loss of generality, 
we may assume that $k\neq n$,  
$\alpha_{n-1}$ and $\alpha_{n}$ are arcs in $F_{m}$ and $F_{m+1}$ respectively, 
and $F_{m}\cap F_{m+1}=\delta_m$. 
Since $\alpha_n$ is parallel to $\partial F_{m+1}$ in $F_{m+1}$, 
$\alpha_n$ divides $F_{m+1}$ into a disk $D$ and a surface $F'_{m+1}$ which is homeomorphic to $F_{m+1}$, 
and divides $\delta_m$ into $a$ and $b$, where $a\subset \partial F'_{m+1}$ and $b\subset \partial D$. 
Let $\delta'_{m}$ be an arc obtained from $\alpha_n\cup a$ by pushing slightly into the interor of $F'_{m+1}$. 
Then the set $\{\delta_1,\ldots,\delta_{m-1},\delta'_{m}\}$ is a new full prime decomposing system 
which divides $F$ into $F_1,\ldots,F_{m-1},F'_m,F'_{m+1}$, 
where $F'_m=F_m\cup D$, and 
$|\alpha\cap (\delta_1\cup\cdots\cup\delta_{m-1}\cup\delta'_m)|< n-1$. 
By continuing such operations, we obtain a full prime decomposing system for $F$ 
which is disjoint from $\alpha$.
\end{proof}

Hence, in Corollary \ref{cor:Hopfplumbing}, if we assume that $L$ is composite, 
we can take decomposing spheres for $L$ so that the band of a Hopf banding is disjoint from the decomposing spheres. 
Then we have the following from Corollary \ref{cor:2torus}.

\begin{cor}\label{cor:consumtorus}
Suppose $L'$ is obtained from $L=T(2,p)\#T(2,q)$ by a coherent band surgery along $b$ and $\chi(L')>\chi(L)$, 
where $p,q>1$. 
If $L'$ is fibered,  
then $L'$ is a connected sum $T(2,p_1)\# T(2,p_2)\#T(2,q)$ or $T(2,p)\# T(2,q_1)\#T(2,q_2)$,  
where $p_1,p_2$, $q_1,q_2$  are positive integers with $p_1+p_2=p$, $q_1+q_2=q$. 
Moreover, for each $L'=T(2,p_1)\# T(2,p_2)\#T(2,q)$ or $T(2,p)\# T(2,q_1)\#T(2,q_2)$, 
the band  
is unique up to homeomorphisms.
\end{cor}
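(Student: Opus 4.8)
The plan is to transplant the prime computation of Corollary~\ref{cor:2torus} into each factor of $L=T(2,p)\#T(2,q)$. As a preliminary I would reduce to $\chi(L')=\chi(L)+1$: the signature estimate of the remark following Corollary~\ref{cor:2torus} (Murasugi's inequality $|\sigma(L)-\sigma(L')|\le 1$) should apply here as well, using additivity of $\sigma$ under connected sum, so that the hypothesis $\chi(L')>\chi(L)$ forces $\chi(L')=\chi(L)+1$; by Theorem~\ref{thm:Euler} the band $b$ may then be taken inside the fiber $F$ of $L$. I would record that $F$ is the plumbing of the standard fiber surfaces $F_p,F_q$ of the two factors, each built from a disk by successively plumbing Hopf annuli with left-hand twists (Subsection~\ref{ssec:(2,p)}); hence so is $F$, and its monodromy $h$ is a composition of left-hand Dehn twists. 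By Corollary~\ref{cor:Hopfplumbing} and Theorem~\ref{thm:Hopfband}, the assumption that $L'$ is fibered is then equivalent to the spanning arc $\alpha$ of $b$ being clean and alternating, i.e.\ to $F$ being a Hopf banding of $F'$ along $b$.

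Next I would push $b$ off the prime decomposition. Because $F$ is composite, $h$ is a composition of left-hand Dehn twists, and $\alpha$ is clean and alternating, Corollary~\ref{cor:disjointfromsystem} supplies a full prime decomposing system disjoint from $\alpha$. As $T(2,p)$ and $T(2,q)$ are prime for $p,q>1$, this system is a single arc $\delta$ (the $m=1$ case, in which $\delta$ is unique up to isotopy), so $\alpha$ lies entirely in one factor, say $F_p$ (the case $F_q$ is symmetric). Since $\alpha$ is disjoint from $\delta$ one has $h(\alpha)\simeq h_p(\alpha)$, so $\alpha$ remains clean and alternating inside $F_p$ (this is the singleton set $A$ of Theorem~\ref{thm:connectedsum+-}(1)), and cutting $F$ along $\alpha$ affects only $F_p$ while leaving $F_q$ and the connect-sum structure intact. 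Applying Corollary~\ref{cor:2torus} within $F_p$, the band can be moved into one of the two disks of $F_p$, so that the surgery on the factor $T(2,p)$ produces $T(2,p_1)\#T(2,p_2)$ with $p_1+p_2=p$; reassembling with the untouched factor gives $L'=T(2,p_1)\#T(2,p_2)\#T(2,q)$, and symmetrically $T(2,p)\#T(2,q_1)\#T(2,q_2)$ when $\alpha\subset F_q$. This establishes the stated form of $L'$.

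For uniqueness I would fix a target, say $L'=T(2,p_1)\#T(2,p_2)\#T(2,q)$. Any band producing it yields, as above, a clean alternating arc disjoint from the isotopically unique arc $\delta$ and contained in the split factor; Corollary~\ref{cor:2torus} gives that the band in $(S^3,T(2,p))$ is unique up to homeomorphism, and such a homeomorphism extends across the connected sum to $(S^3,L)$. The step I expect to be the main obstacle is precisely this last reduction: I must verify that two bands for the same $L'$ necessarily split the \emph{same} prime factor (which rests on the essential uniqueness of the prime decomposition, with extra care when $p=q$, where the factors may be interchanged by a homeomorphism), and that the homeomorphism furnished by Corollary~\ref{cor:2torus} on the $T(2,p)$ summand extends over $L$ without disturbing the $T(2,q)$ factor. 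The auxiliary reduction to $\chi(L')=\chi(L)+1$ via the connected-sum signature also warrants a careful check that Murasugi's bound remains sharp enough in this setting.
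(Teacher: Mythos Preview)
Your approach matches the paper's: its entire proof is the sentence preceding the corollary, namely that Corollary~\ref{cor:disjointfromsystem} lets one make the Hopf-banding band disjoint from the decomposing sphere, whence Corollary~\ref{cor:2torus} applies in the relevant prime factor. The preliminary reduction to $\chi(L')=\chi(L)+1$ that you flag can be handled more directly by Lemma~\ref{lem:alternating} (the monodromy of $F$ is a product of left-hand Dehn twists, so a non-fixed arc is alternating and condition~(4) of Theorem~\ref{thm:4equiv} never holds) rather than by the signature bound.
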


\section{Dehn surgeries along arc-loops}
\label{section:loop}

Let $L$ be a fibered link in a manifold $M$ with fiber $F$, and let $\alpha$ be an arc in $F$. 
There exists a disk $D$ in $M$ such that $D\cap F=\alpha$ and  $\partial D$ is disjoint from $F$. We call  $c=\partial D$ an \emph{$\alpha$-loop}, or generally an \emph{arc-loop}.
In this section we will characterize Dehn surgeries along arc-loops preserving $F$ as a fiber surface,  using results of Ni \cite{NiDSKPM} about surgeries on knots in trivial sutured manifolds. In Section \ref{section:crossingchanges} and Section \ref{section:alternativeproof},  we will use this characterization for that of generalized crossing changes and an alternative proof of Theorem \ref{thm:whenfiber}, respectively.

\begin{thm}[\cite{NiDSKPM}]
\label{thm:Ni}
Suppose $F$ is a compact surface and that $c \subset F \times I$ is a simple closed curve. Suppose that $\gamma$ is a non-trivial slope on $c$, and that $N(\gamma)$ is the manifold obtained from $F \times I$ via the $\gamma$-surgery on $c$. If the pair $(N(\gamma), (\bd F) \times I)$ is homeomorphic to the pair $(F \times I, (\bd F) \times I)$, then one can isotope $c$ such that its image on $F$ under the natural projection $p: F \times I \to F$ has either no crossing or exactly 1 crossing.

The slope can be determined as follows: Let $\lambda_b$ be the frame specified by the surface $F$. When the projection has no crossing, $\gamma = \frac{1}{n}$ for some integer $n$ with respect to $\lambda_b$; when the minimal projection has exactly 1 crossing, $\gamma = \lambda_b$.
\end{thm}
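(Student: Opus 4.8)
The plan is to detect the product structure homologically with sutured Floer homology, and then to convert the resulting rank constraint into the stated geometric normal form for $c$. Recall Juh\'asz's product-detection theorem: a taut balanced sutured manifold is a product $(R \times I, \bd R \times I)$ if and only if its sutured Floer homology $SFH$ has rank one. Thus $(F \times I, \bd F \times I)$ has $SFH = \mathbb{Z}$, and the hypothesis that $(N(\gamma), \bd F \times I)$ is again a product gives $SFH(N(\gamma)) = \mathbb{Z}$. The problem is therefore to classify the pairs $(c, \gamma)$ for which the $\gamma$-filling of the knot complement carries rank-one sutured Floer homology.

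First I would isotope $c$ in $F \times I$ to minimize the number of crossings $k$ of its projection $p(c) \subset F$, and build a sutured Heegaard diagram for the complementary sutured manifold $(M_c, \gamma_c) = (\overline{(F \times I) \rmv N(c)}, \bd F \times I)$ adapted to this minimal projection, recording the torus $\bd N(c)$ and the surface framing $\lambda_b$. The heart of the argument is a lower bound showing that the rank of $SFH(N(\gamma))$ grows with $k$: each crossing of a minimal projection should contribute independent generators (equivalently, spread the Alexander-type grading support of $M_c$ across more $\mathrm{Spin}^c$-structures), so that rank one forces $k \le 1$. This is precisely the step that produces the dichotomy ``no crossing or exactly one crossing'' in the conclusion.

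It then remains to determine the product-preserving slopes in each case by a direct surgery description, using $SFH$ only to exclude the remaining slopes. When $k = 0$, $c$ is isotopic onto a fiber $F \times \set{t}$; cutting along that fiber and regluing by $n$ Dehn twists along $c$ exhibits every Rolfsen-twist filling $\gamma = \tfrac{1}{n}$ (relative to $\lambda_b$) as a reparametrized product, while a rank computation (via the sutured surgery exact triangle for a triple of pairwise-distance-one slopes, or by a direct homeomorphism type check) shows no other slope yields rank one. When $k = 1$, the single crossing forces $c$ to run once over itself; a direct handle computation shows the surface framing $\gamma = \lambda_b$ reglues $M_c$ to a product (a plumbing-type move, in keeping with the Hopf-banding picture of earlier sections), and the same exact-triangle input rules out every other slope, giving $\gamma = \lambda_b$ exactly.

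The principal obstacle is the rank-versus-crossing inequality: rigorously converting the rank-one conclusion of sutured Floer homology into the hard bound $k \le 1$. This demands an honest sutured Heegaard diagram tailored to a genuinely minimal projection, together with a generator-counting or grading-support argument proving that each additional crossing strictly raises the rank; one must also verify the tautness and irreducibility hypotheses of the product-detection theorem for each filling and discard degenerate configurations (an inessential $c$, or a trivial slope). By contrast, the slope determination within each case reduces to a Rolfsen-twist computation and presents no real difficulty.
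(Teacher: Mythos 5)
The paper offers no proof of Theorem \ref{thm:Ni}: it is imported wholesale from Ni \cite{NiDSKPM} and used as a black box, so your attempt has to be measured against Ni's own argument. Your outline is Floer-theoretic in the same general spirit, but it has a genuine gap at precisely the step you yourself call the principal obstacle, and flagging the obstacle does not fill it. The claim that in a crossing-minimal projection each crossing contributes independent $SFH$ generators, so that rank one forces $k \le 1$, is not an available statement, and the mechanism you propose cannot deliver it: the crossing number of $p(c)$ is a diagrammatic quantity with no generator-by-generator reflection in a sutured Heegaard diagram (generators count intersections of attaching curves, and nothing prevents wholesale cancellation in homology), and minimality of the projection is not a hypothesis Floer homology can see. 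Worse, the rank-one constraint you invoke is attached to $N(\gamma)$, where it is vacuous --- the hypothesis literally says the pair $(N(\gamma), \bd F \times I)$ is homeomorphic to the product pair, so $SFH(N(\gamma)) \cong \mathbb{Z}$ is free and Juh\'asz's product-detection theorem is being applied in the direction where it has no content. The usable input is rather that \emph{two distinct fillings} of the complement of $c$ --- the meridional filling, which returns $F \times I$, and the $\gamma$-filling --- both have rank-one $SFH$, and the real work is to convert that pair of conditions into structural constraints on the sutured knot Floer homology of $c$ itself.

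That conversion is what Ni actually carries out: the rank-one conditions on the two fillings bound the knot Floer homology of $c$ in the product sutured manifold, and his fibredness-detection technology together with Gabai-style sutured manifold arguments (in the tradition of the classification of 1-bridge braids in solid tori) then forces the 0- or 1-crossing normal form; only afterwards does the framing bookkeeping $\lambda_b = \lambda + (\mathrm{framing}) \cdot \mu$ identify the slope as $\frac{1}{n}$ or $\lambda_b$. Your slope analysis within each normal form is fine as far as it goes --- a Rolfsen twist along a curve isotopic into a fiber does exhibit the $\frac{1}{n}$ fillings as products, and the surface-framed filling in the one-crossing case matches the paper's Theorem \ref{thm:surgery} --- and the degenerate cases you mention (inessential $c$, trivial slope, tautness of the complement) do genuinely need handling. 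But none of this substitutes for the missing rank-versus-crossing step, which is essentially the entire theorem; as written, the proposal reduces Ni's result to an inequality that is unproved and not attackable by the generator-counting route you sketch.
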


\begin{rem} \label{rem:Ni} 
Conversely, the surgeries in the statement of Theorem \ref{thm:Ni} do not change the homeomorphism type of the pair $(F\times I, (\bd F) \times I)$.
\end{rem}

Our first objective will be to relate such a loop $c$ to an arc-loop.

\begin{definition}
In $F \times [0,1]$, a loop $c$ is said to be in \emph{1-bridge position} (w.r.t. $x_1,x_2$) if $c$ is partitioned into arcs, $\tau, \beta, \nu_1, \nu_2$, where $\tau$ is embedded in $F \times \set{\frac{1}{3}}$, $\beta$ is embedded in $F \times \set{\frac{2}{3}}$, and $\nu_i = \set{x_i} \times \left [ \frac{1}{3}, \frac{2}{3} \right ] \, \,  (i = 1,2)$.

We extend this definition to a loop $c$ in the complement of a fibered link $L$ in a manifold $M$ with fiber surface $F$ if $c$ is in 1-bridge position in the product structure of the complementary sutured manifold of $F$.
\end{definition}

\begin{definition} We will say that two loops $c$ and $c'$ in 1-bridge positions are \emph{1-bridge isotopic} if there is an isotopy from $c$ to $c'$ so that the curves are in 1-bridge positions throughout the transformation (where the points $x_1, x_2$ may change throughout).
\end{definition}

Recall that $p: F \times I \to F$ is the natural projection map defined by $p(x, t) = x$. The \emph{1-bridge crossing number} of a loop $c$ in 1-bridge position,  $bc_1(c)$, is the minimum number of crossings of $p(c)$ over all 1-bridge positions that are 1-bridge isotopic to $c$.
The \emph{minimum 1-bridge crossing number} of a loop $c$ having 1-bridge positions,  $mbc_1(c)$, is the minimum of the 1-bridge crossing number over all 1-bridge positions that are isotopic to $c$.
We will show that $mbc_1(c)=bc_1(c)$ for any loop $c$ in 1-bridge position.
\begin{lem}
\label{lem:1BridgeFxI}
Let $c$ and $c'$ be loops in $F \times I$ in 1-bridge positions.
If $c$ and $c'$ are isotopic in $F \times I$, then $c$ and $c'$ are 1-bridge isotopic. 
Hence $mbc_1(c)=bc_1(c)=bc_1(c')$.
\end{lem}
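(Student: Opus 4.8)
The plan is to show that any ambient isotopy of $F\times I$ carrying $c$ to $c'$ can be replaced by one that keeps the loop in $1$-bridge position at every moment; the asserted equalities of crossing numbers then follow formally. First I would record the only data a $1$-bridge loop carries: writing $a=p(\tau)$ and $b=p(\beta)$, the loop is the union of the embedded over-arc $a$ (pushed to the level $F\times\set{\frac{1}{3}}$) and the embedded under-arc $b$ (pushed to $F\times\set{\frac{2}{3}}$), sharing the endpoints $x_1,x_2$, with $a$ lying entirely above $b$. Thus $p(c)=a\cup b$ is a diagram on $F$ in which every crossing is an intersection of $a$ with $b$, and $a$ is the over-strand at each of them, so $bc_1(c)$ measures the minimal number of such $a$--$b$ crossings. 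The first observation is that a $1$-bridge isotopy is exactly what is produced by (i) isotoping $a$ rel its endpoints inside its level, (ii) isotoping $b$ rel its endpoints inside its level, and (iii) dragging the endpoints $x_1,x_2$, hence the vertical arcs $\nu_1,\nu_2$, within $F$; because $a$ and $b$ sit at separated heights, sliding $a$ across $b$ in (i) (a height-separated Reidemeister II move) is realized by an isotopy that never leaves $1$-bridge position.

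Next I would run a parametrized general-position argument on the height function $h\colon F\times I\to I$. Given an isotopy $c_s$ ($s\in[0,1]$) from $c_0=c$ to $c_1=c'$, I would perturb it so that $h|_{c_s}$ is Morse with exactly two critical points for all but finitely many $s$, and so that at the exceptional parameters only the standard codimension-one events occur: an exchange of two critical values (which does not change the number of maxima) or the birth/death of a cancelling maximum--minimum pair (which changes the maximum count by $\pm1$). Away from the birth/death parameters each $c_s$ has a single maximum and a single minimum, hence is isotopic to a $1$-bridge loop by collapsing it onto the two flat levels, and the transitions between consecutive such positions are exactly the moves (i)--(iii) above.

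The main obstacle is to rule out genuine stabilization, that is, to eliminate the birth/death events so that the maximum count never exceeds one. This is where the global height separation of $a$ and $b$ is essential: a birth produces a small $S$-shaped finger in the height profile, and I would argue that, because the over-arc lies uniformly above the under-arc, such a finger can always be pushed back into one of the two flat levels and absorbed by an in-level isotopy of type (i) or (ii), or cancelled against its paired death, without ever introducing a second bridge. Concretely, I expect to show that the strand created by a birth is boundary-parallel in the relevant half, $F\times[0,\frac{1}{2}]$ or $F\times[\frac{1}{2},1]$, so that the new pair is removable by an isotopy supported near that half, and that these local removals can be chosen compatibly across the family. Carrying this out carefully---tracking the interaction of several simultaneous fingers and ensuring the removals interpolate through $1$-bridge positions---is the technical heart of the argument.

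Finally, once $c$ and $c'$ are shown to be $1$-bridge isotopic, the conclusion is immediate. Since $bc_1$ depends only on the $1$-bridge isotopy class, we get $bc_1(c)=bc_1(c')$. Moreover, every $1$-bridge position isotopic to $c$ is then $1$-bridge isotopic to $c$, so the minimum defining $mbc_1(c)$ is taken over a set on which $bc_1$ is constant and equal to $bc_1(c)$; hence $mbc_1(c)=bc_1(c)$. This last equality is precisely what is needed to pass a minimal projection through Ni's criterion (Theorem \ref{thm:Ni}) in the applications that follow.
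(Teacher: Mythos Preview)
Your proposal has a genuine gap at exactly the point you flag as ``the technical heart of the argument.'' You correctly identify that a Cerf-theory approach reduces to cancelling birth/death pairs so that the bridge number stays at one throughout, but you do not actually prove this can be done. The ``global height separation of $a$ and $b$'' you invoke is a feature of the \emph{endpoints} $c_0$ and $c_1$ of the isotopy, not of the intermediate loops $c_s$; once the family is perturbed into general position with respect to $h$, the strands at intermediate times need not be separated in height at all, so there is no reason the finger created by a birth should be boundary-parallel in one half or removable without passing through a genuine $2$-bridge position. Bridge-position uniqueness results of this type are typically delicate and can fail in other settings; the sketch you give does not supply the missing argument.

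The paper avoids this difficulty entirely by an algebraic route. It first uses $1$-bridge isotopies to shrink and superimpose the short bridges $\beta=\beta'$ (so the two loops share their vertical arcs), then observes that since $p_*\colon\pi_1(F\times I)\to\pi_1(F)$ is an isomorphism, isotopic loops give conjugate elements $[c]=\ell^{-1}[c']\ell$. Dragging the short bridge of $c'$ along a representative of $\ell$ is a $1$-bridge isotopy producing $c''$ with $[c'']=[c]$ and the same bridge as $c$; then $p(\tau)$ and $p(\tau'')$ are homotopic arcs in $F$ with the same endpoints, hence isotopic rel endpoints by Baer's theorem, and that surface isotopy lifts to a $1$-bridge isotopy. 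This replaces your Cerf-theoretic cancellation problem with a one-line appeal to a classical $2$-dimensional fact.
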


\begin{proof}
First, by shrinking $\beta$ and sliding the $\nu_i$ along with the endpoints of $\beta$, we may assume that $\beta$ is a very short arc in $F \times \set{\frac{2}{3}}$.  Observe that $\tau$ can be slid out of the way during this transformation, so that this operation is a 1-bridge isotopy. Let $p: F \times I \to F$ be the projection map defined by $p(x, t) = x$. Then, observe further that the number of double-points of $c$ under the map $p$ does not change during this transformation. Similarly, we may shrink $\beta'$, and then translate $\beta'$ through $F \times \set{\frac{2}{3}}$ via 1-bridge isotopy so that $\beta = \beta'$ (and therefore also so that $\nu_i = \nu_i'$ for $i=1,2$). 

Let $s = (x_1, \frac{1}{3})$, one of the endpoints of $\tau$. The projection map induces an isomorphism on fundamental groups, so that $\pi_1(F \times I, s) \cong \pi_1(F, x_1)$ via $p_*$. Then, since $c$ and $c'$ are isotopic in $F \times I$, we have $[c]_{F \times I} = \ell^{-1} * [c']_{F \times I} * \ell$ for some word $\ell \in \pi_1(F \times I, s)$. In fact, up to homotopy in $\pi_1(F \times I, s)$, we can take $\ell$ to be a loop in $F \times \set{\frac{1}{3}}$, based at $s$, containing the arc parallel to $\beta$ in $F \times \set{\frac{1}{3}}$ as a sub-arc, and never intersecting the arc parallel to $\beta$ in $F \times \set{\frac{1}{3}}$.

We now perform a 1-bridge isotopy of $c'$ by dragging $\beta' \cup \nu_1' \cup \nu_2'$ along $\ell$. Any time $\ell$ intersects $\tau'$, move $\tau'$ out of the way of the feet of $\nu_1' \cup \nu_2'$, dragging $\tau'$ along for the duration of the isotopy. Any time $\ell$ intersects itself, the isotopy will eventually run into $\tau'$ a second time, so we simply drag it along in the same way, see Figure \ref{fig:1bridgeisotopy}. Call the result $c'' = \tau'' \cup \beta'' \cup \nu_1'' \cup \nu_2''$. 
By design, we now have $[c'']_{F \times I} =  \ell^{-1} * [c']_{F \times I} * \ell= [c]_{F \times I}$. Thus, $[p(c)]_F = p_*([c]_{F \times I}) = p_*([c'']_{F \times I}) = [p(c'')]_F$. 
\begin{figure}[h]
\begin{center}
\includegraphics[width=13cm]{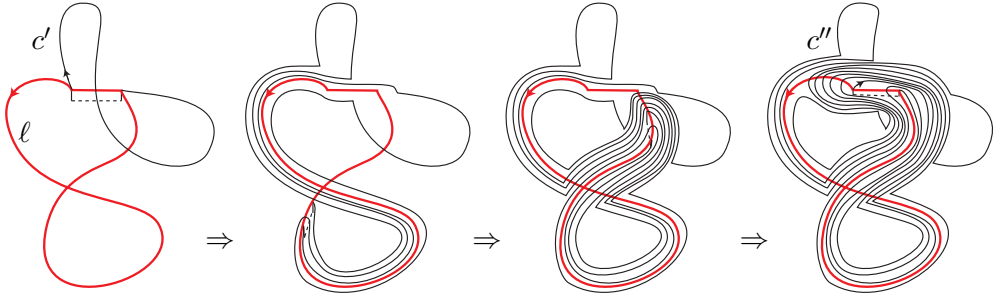}

\begin{picture}(400,0)(0,0)
\put(90,30){$\Rightarrow$}
\put(190,30){$\Rightarrow$}
\put(290,30){$\Rightarrow$}
\put(20,70){$\ell$}
\put(25,105){$c'$}
\put(315,105){$c''$}
\end{picture}
\caption{1-bridge isotopy along $\ell$.}
\label{fig:1bridgeisotopy}
\end{center}
\end{figure}
Now, since $\beta = \beta''$, we in fact know that $p(\tau)$ and $p(\tau'')$ are homotopic in $F$, and hence are isotopic in $F$, fixing endpoints (see \cite{BaeIKOGFZTDF}). This isotopy clearly lifts to a 1-bridge isotopy from $c$ to $c''$.  
Thus, ultimately $c$, $c'$ and $c''$ are all related by 1-bridge isotopy. 
\end{proof}

\begin{lem}
\label{lem:1BridgeFiberedLink}
Let $c$ and $c'$ be loops in the complement of a fibered link, $L$, in 1-bridge positions. If $c$ and $c'$ are isotopic in $M \rmv L$, then there exists an integer $k$, so that $c$ and $H^k(c')$ are 1-bridge isotopic, where $H$ is the natural automorphism of $M \rmv L$ induced by the monodromy of the link. Hence  $mbc_1(c)=bc_1(c)=bc_1(c')$.
\end{lem}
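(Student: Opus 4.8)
The plan is to reduce the statement about loops in the link complement $M \rmv L$ to the corresponding statement about loops in a product $F \times I$, where Lemma \ref{lem:1BridgeFxI} already applies. The key conceptual point is that the complementary sutured manifold of the fiber $F$ is a product $F \times I$, and that the full link complement $M \rmv L$ is recovered from this product by gluing $F \times \set{0}$ to $F \times \set{1}$ via the monodromy $h$. Equivalently, $M \rmv L$ is the mapping torus $(F \times \mathbb{R})/\!\sim$, where $(x,t+1) \sim (h(x),t)$, and $F \times I$ sits inside $M \rmv L$ as a fundamental domain whose copies are permuted by the covering translation $H$ induced by $h$.

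First I would set up the infinite cyclic cover $\wt{M \rmv L} \cong F \times \mathbb{R}$, with deck transformation $H$ (the lift of the monodromy). A loop $c$ in $1$-bridge position lies inside a single copy of $F \times I$, since its $\tau$, $\beta$ arcs live at heights $\frac{1}{3}, \frac{2}{3}$ within one fundamental domain. The hypothesis is that $c$ and $c'$ are isotopic in $M \rmv L$. I would lift this isotopy to the cover: an isotopy downstairs lifts to an isotopy upstairs once we fix a lift of the starting loop. The lift of $c'$ may terminate in a different fundamental domain than the one containing $c$; the integer $k$ measuring this shift is precisely the deck-transformation power, so that $c$ and $H^k(c')$ lie in (or can be isotoped into) the \emph{same} copy of $F \times I$ and are isotopic \emph{there}.

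Once $c$ and $H^k(c')$ are isotopic inside a single $F \times I$, both in $1$-bridge position, Lemma \ref{lem:1BridgeFxI} applies directly and yields that they are $1$-bridge isotopic, giving $bc_1(c) = bc_1(H^k(c'))$. Since $H$ is a homeomorphism preserving the product structure (it is induced by the surface automorphism $h$, which preserves heights), we have $bc_1(H^k(c')) = bc_1(c')$, and the equality $mbc_1(c) = bc_1(c) = bc_1(c')$ follows exactly as in Lemma \ref{lem:1BridgeFxI}.

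\textbf{The main obstacle} I anticipate is making the lifting and ``capturing in a single fundamental domain'' step rigorous: one must verify that the lifted isotopy can be arranged so that $H^k(c')$ is genuinely contained in the same copy of $F \times I$ as $c$ (not merely isotopic to something straddling a boundary $F \times \set{0,1}$), and that the $1$-bridge structure is preserved when pushing the loop off the gluing surface. The cleanest way to handle this is to note that a $1$-bridge loop meets each level surface $F \times \set{t}$ in a controlled way, so after a small isotopy supported near $F \times \set{0}$ it sits in the interior of one fundamental domain; the power $k$ is then forced by tracking how many times the lifted isotopy crosses a chosen level surface. Care is needed because a generic isotopy in $M \rmv L$ need not respect the product structure at all, so the $1$-bridge position must be re-established at the end, which is exactly where invoking Lemma \ref{lem:1BridgeFxI} (rather than re-proving it) does the work.
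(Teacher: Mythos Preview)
Your approach is essentially the paper's: pass to the infinite cyclic cover $F\times\mathbb{R}$, lift the isotopy, identify the deck-translation shift $k$, and then invoke Lemma~\ref{lem:1BridgeFxI}. One small but worthwhile technical difference: you aim to put $c$ and $H^k(c')$ into a \emph{single} fundamental domain $F\times[k,k+1]$ and argue they are isotopic \emph{there}. That last step is not automatic---two loops lying in $F\times[k,k+1]$ and isotopic in $F\times\mathbb{R}$ need not a priori be isotopic within $F\times[k,k+1]$, since the lifted isotopy may wander through many fundamental domains. The paper sidesteps exactly this by observing that the lifted isotopy has compact support, hence lives in some $F\times[m,n]$ with $m\le 0<k+1\le n$; both lifts are then in $1$-bridge position in this larger (but still product) piece, and Lemma~\ref{lem:1BridgeFxI} applies directly there. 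The resulting $1$-bridge isotopy then descends. This resolves your ``main obstacle'' more cleanly than trying to push everything into one fundamental domain.
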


\begin{proof}
Consider the infinite cyclic cover $\wt{N} \cong F \times \mathbb{R}$, with covering map $P: \wt{N} \to N = M \rmv L$ defined by $P(x, t) =  (h^{k}(x), t - k)$, for $t \in [k, k+1]$. Then, the automorphism $H$ lifts to a map $T: \wt{N} \to \wt{N}$ defined by $T(x, t) = (x, t+1)$.

The isotopy from $c$ to $c'$ in $M \rmv L$ lifts to an isotopy from a lift $\wt{c}$ of $c$ to a lift $\wt{c'}$ of $c'$, in $\wt{N}$. By relabeling if necessary, we may take $\wt{c'}$ to be in $F \times [0,1] \subset F \times \mathbb{R}$, and $\wt{c}$ to be in $F \times [k, k+1]$ for some $k \in \mathbb{Z}_{\geq 0}$. Then let $\wh{c'} = T^{k}(\wt{c'})$, so that $\wh{c'}$ is isotopic to $\wt{c}$ in $\wt{N}$, and $\wh{c'} \subset F \times [k, k+1]$.

The isotopy from $\wh{c'}$ to $\wt{c}$ is supported in a compact region of $\wt{N}$, so we can restrict our attention to $F \times [m, n]$, for some $m,n \in \mathbb{Z}$, with $m \leq 0 < k + 1 \leq n$.

Now, $\wh{c'}$ and $\wt{c}$ can be considered to be in 1-bridge positions in $F \times [m,n]$. Hence, by Lemma \ref{lem:1BridgeFxI}
, $\wh{c'}$ and $\wt{c}$ are 1-bridge isotopic in $F \times [m, n] \subset \wt{N}$. This descends to a 1-bridge isotopy in $M\rmv L$ from $H^{k}(c')$ to $c$. 

\end{proof}

Let $F$ be a fiber surface of a fibered link, and let $\alpha$ be a properly embedded arc in $F$. Recall that a loop formed by `pushing-off' $\alpha$ from $F$ is an \emph{$\alpha$-loop}. 

Now, we will use these results to characterize arc-loops in a surface which can be pushed into the surface with all crossings  
contained in a single sub-arc.

\begin{lem}
\label{lem:1BridgeArcLoop} 
If $c$ 
is an $\alpha$-loop, then $\rho(\alpha) = mbc_1(c)$.
\end{lem}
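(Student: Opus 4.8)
The plan is to prove the two inequalities $mbc_1(c) \le \rho(\alpha)$ and $mbc_1(c) \ge \rho(\alpha)$ by comparing the two horizontal arcs of a $1$-bridge position of $c$, projected into $F$, with the arcs $\alpha$ and $h(\alpha)$. Throughout I would work in the complementary sutured manifold $(\overline{M \rmv n(F)}, \bd F)$, a trivial sutured manifold $F \times I$ with $R_+(\gamma) = F \times \set{0}$ and $R_-(\gamma) = F \times \set{1}$, remembering that the monodromy is encoded in the identification $(x,1)\sim(h(x),0)$ used to recover $M$. The structural point that makes the whole comparison work is that for a loop in $1$-bridge position the projection $p(c)$ is the union of the two embedded arcs $a := p(\tau)$ and $b := p(\beta)$, which meet only at the two bridge feet $x_1,x_2$; hence every crossing of $p(c)$ is an interior intersection point of $a$ and $b$, and $bc_1$ counts exactly such intersections.

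For the upper bound I would build an explicit $1$-bridge position of $c$. After isotoping $\alpha$ and $h(\alpha)$ to intersect minimally in $F$, place the positive half of the $\alpha$-loop at level $\tfrac13$ so that $\tau$ projects to $\alpha$, and the negative half at level $\tfrac23$ so that $\beta$ projects to $h^{-1}(\alpha)$; the shift by $h^{-1}$ is precisely the monodromy gluing relating the two sides of $F$ in the product structure of $\overline{M \rmv n(F)}$. The crossings of $p(c)$ are then the interior intersections of $\alpha$ and $h^{-1}(\alpha)$, and since $\rho(\alpha,h^{-1}(\alpha)) = \rho(h(\alpha),\alpha) = \rho(\alpha)$ (using that $h$ is a homeomorphism and $\rho$ is symmetric), this position realizes exactly $\rho(\alpha)$ crossings, giving $mbc_1(c) \le \rho(\alpha)$.

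For the lower bound I would take an arbitrary $1$-bridge position $c'$ isotopic to $c$ and show its projection already has at least $\rho(\alpha)$ crossings. Since $c$ bounds a disk $D$ with $D \cap F = \alpha$, the loop $c'$ likewise bounds a disk $D'$ meeting $F$ in a single arc $\alpha'$ (after removing circles of $D' \cap F$ by incompressibility of $F$ and reducing the arc count by an innermost/outermost argument). Cutting $D'$ along $F$ and passing to $\overline{M \rmv n(F)}$ splits it into two disks, each exhibiting an isotopy rel endpoints of one half of $c'$ onto a fiber copy of $\alpha'$: one gives $a = p(\tau') \simeq \alpha'$ in $R_+$, the other gives $b = p(\beta') \simeq h^{-1}(\alpha')$, the $R_-$-copy of $\alpha'$. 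Hence the number of crossings of $p(c')$ is at least the geometric intersection number $\rho(\alpha',h^{-1}(\alpha')) = \rho(\alpha')$. Finally, invoking Lemma \ref{lem:1BridgeFiberedLink} to absorb the monodromy ambiguity, I would replace $c'$ by $H^k(c')$ so that $\alpha'$ becomes isotopic to $\alpha$; then $\rho(\alpha') = \rho(\alpha)$, every position has at least $\rho(\alpha)$ crossings, and $mbc_1(c) \ge \rho(\alpha)$.

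The main obstacle is the lower bound, and specifically the claim that in an arbitrary $1$-bridge position the two projected arcs are forced, up to isotopy, to be $\alpha$ and $h^{-1}(\alpha)$. This is where the bounding disk of the arc-loop, the product structure of the complementary sutured manifold, and the monodromy bookkeeping must be combined carefully. The delicate points are showing that $D' \cap F$ can genuinely be reduced to a single arc $\alpha'$, verifying that cutting $D'$ along the fiber yields honest product disks so that the two halves of $c'$ are \emph{isotopic} (not merely homotopic) rel endpoints to the fiber copies of $\alpha'$, and checking that the discrepancy between $\alpha'$ and $\alpha$ is exactly the power $H^k$ supplied by Lemma \ref{lem:1BridgeFiberedLink}.
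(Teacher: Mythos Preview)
Your upper bound is correct and matches the paper's argument: the natural $\alpha$-loop position exhibits a $1$-bridge position with $\rho(\alpha)$ crossings.

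The lower bound, however, has a genuine error and is also unnecessarily roundabout. The error is in the sentence ``Cutting $D'$ along $F$ \ldots\ splits it into two disks.'' Since $c' = \partial D'$ is disjoint from $F$ and the endpoints of $\alpha' = D' \cap F$ lie on $L = \partial F$, the arc $\alpha'$ sits entirely in $int(D')$ with both endpoints in the interior. Cutting a disk along such an interior arc yields an \emph{annulus}, not two half-disks. So there is no pair of product half-disks exhibiting $\tau'$ and $\beta'$ as isotopic to $\alpha'$ and $h^{-1}(\alpha')$; at best the annulus gives a homotopy of the whole loop $p(c')$ to $\alpha' \cup h^{\pm1}(\alpha')$, and you would still have to argue separately that this controls the two embedded $1$-bridge arcs individually. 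Your last step, invoking Lemma~\ref{lem:1BridgeFiberedLink} to make $\alpha'$ isotopic to $\alpha$, is also not what that lemma provides: it gives a $1$-bridge isotopy of the loops, not a relation between the spanning arcs of two different bounding disks.

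The paper's route avoids all of this. Lemma~\ref{lem:1BridgeFiberedLink} already gives $mbc_1(c) = bc_1(c')$ for \emph{every} $1$-bridge position $c'$, so the minimum need only be computed once, at the standard arc-loop position. There $p(c')$ is literally $\alpha \cup h(\alpha)$ (``pushing half of $c'$ through the monodromy''), and since $1$-bridge isotopies move the pair $(p(\tau),p(\beta))$ by isotopies of $F$ keeping the common endpoints together, the minimal interior intersection is exactly $\rho(\alpha,h(\alpha)) = \rho(\alpha)$. In other words, you should use Lemma~\ref{lem:1BridgeFiberedLink} at the \emph{start}, to reduce to a single computation, rather than at the end to patch up an argument carried out in an arbitrary position.
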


\begin{rem} \label{rem:1BridgeArcLoop} Observe that every $\alpha$-loop is in a 1-bridge position with $\rho(\alpha)$ crossings,  
but there probably exist loops with 1-bridge positions that are not isotopic to arc-loops.
\end{rem}

\begin{proof}
Let $c'$ be an arc-loop representative isotopic to $c$. Then, as in Remark \ref{rem:1BridgeArcLoop}, $c'$ is in 1-bridge position. Hence, by Lemma \ref{lem:1BridgeFiberedLink}
, there is a 1-bridge isotopy taking $c$ to $c'$, and $bc_1(c')=mbc_1(c)$.  
Further, $p(c')$ is exactly the result of pushing half of $c'$ through the monodromy, so that $bc_1(c') = \rho(\alpha)$.
\end{proof}

Now we will characterize Dehn surgeries along arc-loops 
preserving a fiber surface.
Since any arc-loop bounds a disk in $M$, we can take the preferred longitude for surgery slopes.
\medskip

\noindent
{\bf Theorem \ref{thm:surgery}.}
{\it
Suppose $F$ is a fiber surface in $M$ and $c$ is an $\alpha$-loop. 
Suppose that $\gamma$ is a non-trivial slope on $c$, and that $N(\gamma)$ is the manifold obtained from $M$ via the $\gamma$-surgery on $c$. 
Then $F$ is a fiber surface in $N(\gamma)$ if and only if 
\begin{enumerate}[\rm (1)]
\item $\alpha$ is clean and $\gamma=i_{\partial}(\alpha)+\frac{1}{n}$  for some integer $n$, or
\item $\alpha$ is once-unclean and $\gamma=i_{\partial}(\alpha)$.
\end{enumerate}
}
\medskip

\begin{proof}
Since $F$ is fiber surface and $c$ is disjoint from $F$, 
we may assume $c$ is in $\overline{M\rmv n(F)}=F\times I$.  
Theorem \ref{thm:Ni} tells us that $mbc_1(c)\le1$  
in the case when $F$ is a fiber surface in $N(\gamma)$. Then, by Lemma \ref{lem:1BridgeArcLoop}, $\alpha$ is either clean, or once-unclean with respect to the monodromy for $F$, depending on whether $c$ has zero or one crossings, respectively. 

Let $\lambda$ be the preferred longitude on $c$, $\mu$ be a meridian for $c$, and $\lambda_b$ the `black-board' frame induced by the surface $F$ together with the small bridge, as in \cite{NiDSKPM}. Then $\lambda_b = \lambda + i_{\partial}(\alpha) \cdot \mu$. 
By Theorem \ref{thm:Ni},  
the surgery slope $\gamma$ must be $$n \cdot \lambda_b + \mu = n \cdot \lambda + (n \cdot i_{\partial}(\alpha) + 1) \cdot \mu$$ if $c$ has no crossing,  
and must be $$\lambda_b = \lambda + i_{\partial}(\alpha) \cdot \mu$$ if $c$ has a single crossing. 
\end{proof}

\section{Characterization of generalized crossing changes between fibered links}
\label{section:crossingchanges}

In this section we will characterize generalized crossing changes between fibered links.
Throughout this section, $L$ and $L'$ are oriented links in a manifold $M$  
related by a \emph{generalized crossing change}.
More precisely,  there exists a disk $D$ in $M$ 
such that $L$ intersects $D$ in two points with opposite orientations, 
and $L'$ is the image of $L$ after $(-\frac{1}{n})$-Dehn surgery along $c=\partial D$ for some $n \in \mathbb{Z} \rmv \set{0}$. The curve $c$ is called a \emph{crossing circle}, and we say that $L'$ is the result of a generalized crossing change \emph{of order $n$}.
When $n = \pm 1$, this is just an ordinary crossing change.
  
Scharlemann and Thompson \cite{SchaThoLGCM} showed that in the case $n= \pm1$, there exists a taut Seifert surface $F$ for $L$ or $L'$, say $L$, such that $F$ is disjoint from $c$ 
but intersects $D$ in an arc, and described surface locally. For $|n| > 1$, Kalfagianni and Lin \cite{KalLinKAGET} showed a similar result.

\begin{thm}[\cite{SchaThoLGCM}, \cite{KalLinKAGET}]\label{thm:Eulerc}
Suppose $L'$ is obtained from $L$ in $S^3$  
by a generalized crossing change of order $n$. Then $\chi(L')\ge\chi(L)$ if and only if $L$ has a taut Seifert surface $F$ 
such that $F$ is disjoint from $c$ 
but intersects $D$ in an arc. 
Moreover, $\chi(L')>\chi(L)$ if and only if $F$ is a plumbing of a $(-n)$-times twisted annulus, $A$, and 
a surface, $F''$, which is disjoint from $D$, and the result, $A'$, of $A$ after the twist is compressible.
\end{thm}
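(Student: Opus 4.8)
The plan is to treat the two assertions in turn, in each case proving the easy implication by direct surface surgery and the hard implication by sutured manifold theory. Throughout I would work in the common exterior $X = \overline{S^3 \rmv N(L \cup c)}$, whose Dehn fillings along the meridian of $c$ and along the slope $-\frac{1}{n}$ recover $(S^3,L)$ and $(S^3,L')$ respectively, so that Seifert surfaces for $L$ and for $L'$ become surfaces in $X$ differing only by the framing data on $\partial N(c)$.

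First, the easy direction of the first equivalence. Suppose $L$ has a taut Seifert surface $F$ with $F \cap c = \emptyset$ and $F \cap D$ a single arc $a$ joining the two points of $L \cap D$. The generalized crossing change is supported in a neighborhood of $D$; since $F$ meets that neighborhood in a single band following $a$ and is disjoint from $c$, I can twist that band along with the two strands to produce a Seifert surface $F'$ for $L'$ with $\chi(F') = \chi(F)$. Hence $\chi(L') \geq \chi(F') = \chi(F) = \chi(L)$, as required.

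For the converse, assume $\chi(L') \geq \chi(L)$, take a taut Seifert surface $F$ for $L$, and isotope it to minimize lexicographically the pair $(|F \cap c|,\, |F \cap D|)$. Standard innermost-disk arguments using the irreducibility of $S^3$ and the incompressibility of $F$ remove all closed curves of $F \cap D$, leaving arcs; outermost-arc boundary-compression arguments, together with minimality of $|F \cap c|$, then restrict how these arcs can meet $c$. The crux is to rule out every remaining configuration except ``$F$ disjoint from $c$ and meeting $D$ in one arc,'' and this is precisely where the Euler characteristic hypothesis must enter. The tool is Gabai's theory of taut sutured manifolds \cite{GabFT3M,GabDFLS3}: filling $\partial N(c) \subset \partial X$ controls the Thurston norm, and a norm-minimizing surface in $X$ remains norm-minimizing under all but at most one filling slope. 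I would use this to show that if no surface of the desired type existed, the crossing change would be forced to \emph{decrease} Euler characteristic strictly, contradicting $\chi(L') \geq \chi(L)$. I expect this persistence-of-tautness step to be the main obstacle: it is the technical heart of Scharlemann--Thompson \cite{SchaThoLGCM} for $n = \pm 1$ and of Kalfagianni--Lin \cite{KalLinKAGET} for $|n| > 1$, and carrying the argument uniformly in $n$ requires care with the twisted framing induced on $c$.

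Finally, for the ``Moreover'' statement I would begin from a surface $F$ of the controlled type produced above and examine the band of $F$ running along $a = F \cap D$. Together with the $-n$ twisting around $c$, this band closes up inside $F$ to a $(-n)$-times twisted annulus $A$, exhibiting $F$ as a plumbing (a Murasugi sum with $n=2$ in Definition \ref{def:Murasugisum}) of $A$ with the complementary piece $F'' = \overline{F \rmv A}$, which is disjoint from $D$. The crossing change untwists $A$ into the $0$-twisted annulus $A'$, so the Euler characteristic of the resulting $F'$ equals that of $F$ exactly when $A'$ is incompressible; if instead $A'$ is compressible, compressing it inside the plumbed surface $F'$ raises Euler characteristic and yields $\chi(L') \ge \chi(F') + 2 > \chi(F) = \chi(L)$. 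This gives one direction of the refinement, and the reverse direction follows by running the first part to obtain $F$ of the controlled type and then identifying the twisted annulus as the unique source of any strict jump in Euler characteristic.
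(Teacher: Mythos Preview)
The paper does not give its own proof of this theorem: it is quoted from Scharlemann--Thompson \cite{SchaThoLGCM} (for $n=\pm1$) and Kalfagianni--Lin \cite{KalLinKAGET} (for $|n|>1$), and is used as a black box. So there is no in-paper argument to compare against; your outline is essentially a plan for reconstructing those cited proofs, and on the first equivalence it matches their strategy (twist the band for the easy direction; tautness-persistence under Dehn filling for the hard direction).

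There is, however, a genuine gap in your treatment of the ``Moreover'' clause. You write that the band of $F$ along $a=F\cap D$ ``closes up inside $F$ to a $(-n)$-times twisted annulus $A$, exhibiting $F$ as a plumbing''. This is not something that follows from $F$ being of the controlled type; a regular neighborhood of $a$ in $F$ is just a rectangle with two sides on $\partial F$, and in general there is no annulus summand. The plumbing structure is the \emph{conclusion} one must extract from the hypothesis $\chi(L')>\chi(L)$, and it is obtained as follows: the twisted surface $F'$ has $\chi(F')=\chi(F)=\chi(L)<\chi(L')$, so $F'$ is not taut; since $F'$ is still incompressible away from the twisted band, the compressing (or boundary-compressing) disk for $F'$ must interact with that band, and analyzing that disk is what produces the annulus $A'$ inside $F'$ and hence the $(-n)$-twisted annulus $A$ as a plumbing summand of $F$. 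Your sketch jumps to the answer without performing this step, and the final sentence (``identifying the twisted annulus as the unique source of any strict jump'') is exactly the assertion that needs proof.
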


Suppose now that $L$ is a fibered link. 
Let $\alpha$ be the arc $D\cap F$ in Theorem \ref{thm:Eulerc}.
Recall that we call $c$ an $\alpha$-loop.
We will say that performing the generalized crossing change ($(-\frac{1}{n})$-Dehn surgery) along $c$ is an  $n$-\emph{twist along $\alpha$}.
Here an $\varepsilon$-twist is right- or
left-handed if
$\varepsilon = 1$ or $-1$, respectively.

As mentioned earlier, the result of a plumbing of two surfaces is a fiber surface if and only if both summands are fiber surfaces (\cite{GabMSNGO,GabDFLS3}). Further, the only fiber annuli are the left- and right-handed Hopf annuli. Thus, by Theorem \ref{thm:Hopfband}, we can restate the last part of Theorem \ref{thm:Eulerc} as follows: 
\begin{thm}\label{thm:Eulerc2}
Suppose $L$ is a fibered link in $S^3$  
with fiber $F$, and
$L'$ is obtained from $L$ by an $n$-twist along $\alpha$, 
where $\alpha$ is a properly embedded arc in $F$.
Then $\chi(L')>\chi(L)$ if and only if $n = \pm 1$, and $\alpha$ is clean and alternating with $i_{\bd}(\alpha) = -n$.
\end{thm}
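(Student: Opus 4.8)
The plan is to read off this statement as a direct translation of the \emph{moreover} clause of Theorem \ref{thm:Eulerc}, feeding in three facts: the Gabai plumbing principle (a plumbing is a fiber surface if and only if both summands are), the classification of fiber annuli as exactly the two Hopf annuli, and Theorem \ref{thm:Hopfband}, which rewrites ``Hopf banding'' as the arc condition ``$\alpha$ clean and alternating with prescribed $i_{\partial}$''. The geometric bridge I would fix first is that the crossing circle $c=\partial D$ encircles the twisting region of the annulus summand, so that the arc $\alpha=D\cap F$ of Theorem \ref{thm:Eulerc} is exactly the spanning arc of that annulus, which is the arc appearing in Theorem \ref{thm:Hopfband}.

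For the forward implication I would assume $\chi(L')>\chi(L)$ and invoke the moreover clause of Theorem \ref{thm:Eulerc}: $F$ is a plumbing of a $(-n)$-times twisted annulus $A$ and a surface $F''$ disjoint from $D$, with the post-twist annulus $A'$ compressible. Because $F$ is a fiber surface and plumbing preserves fiberedness in both directions, both $A$ and $F''$ must be fiber surfaces. Since $A$ is a fiber \emph{annulus}, it is forced to be a left- or right-handed Hopf annulus, i.e. a $(\pm1)$-times twisted annulus; comparing this with ``$(-n)$-times twisted'' gives $-n=\pm1$, hence $n=\pm1$. At this point $F$ is literally a Hopf banding of the fiber surface $F''$, so Theorem \ref{thm:Hopfband} applies and yields that $\alpha$ is clean and alternating with the handedness of $A$ encoded by $i_{\partial}(\alpha)$ ($+1$ for right-handed, $-1$ for left-handed). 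Matching this handedness against the $(-n)$-twisting of $A$ then delivers $i_{\partial}(\alpha)=-n$.

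For the converse I would assume $n=\pm1$ and that $\alpha$ is clean and alternating with $i_{\partial}(\alpha)=-n$. Theorem \ref{thm:Hopfband} then presents $F$ as a Hopf banding of a surface $F''$ along a Hopf annulus $A$ whose handedness is dictated by $i_{\partial}(\alpha)=-n$; equivalently $A$ is a $(-n)$-times twisted annulus, and the construction lets me take $F''$ disjoint from $D$. The plumbing principle makes $F''$ a fiber surface. This exhibits $F$ in exactly the form required by Theorem \ref{thm:Eulerc}, and carrying out the $n$-twist converts the $(-n)$-times twisted annulus $A$ into the $0$-times twisted annulus $A'$, which is compressible. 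The reverse direction of the moreover clause of Theorem \ref{thm:Eulerc} then gives $\chi(L')>\chi(L)$.

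I expect the only genuine obstacle to be the sign and convention bookkeeping in the forward step: checking that $\alpha=D\cap F$ really is the spanning arc of the Hopf annulus summand, and that the two conventions --- ``$(-n)$-times twisted annulus'' from Theorem \ref{thm:Eulerc} and ``$i_{\partial}(\alpha)=\pm1 \leftrightarrow$ right/left-handed Hopf annulus'' from Theorem \ref{thm:Hopfband} --- are lined up so that the conclusion reads $i_{\partial}(\alpha)=-n$ with the correct sign rather than its negative. The reduction $-n=\pm1$ and the compressibility of $A'$ are then immediate.
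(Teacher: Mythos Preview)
Your proposal is correct and follows exactly the approach the paper takes: the paper presents Theorem~\ref{thm:Eulerc2} simply as a restatement of the moreover clause of Theorem~\ref{thm:Eulerc}, invoking Gabai's plumbing principle, the fact that the only fiber annuli are the two Hopf annuli, and Theorem~\ref{thm:Hopfband}. Your write-up is in fact more detailed than the paper's one-sentence justification, and your flagged concern about lining up the sign conventions is the only thing that actually needs checking.
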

Moreover, Kobayashi  
showed that the resulting surface of the $n$-twist along $\alpha$ is a pre-fiber surface \cite[Theorem 2]{KobFLUO}, and he also characterized $\alpha$ in the pre-fiber surface  \cite[Lemma 4.7 and Proposition 8.1]{KobFLUO}.  
For the remaining case, 
we will characterize generalized crossing changes between fibered links $L$ and $L'$  with $\chi(L)=\chi(L')$.

Observe that if a crossing circle is nugatory (i.e. bounds a disk in the complement of the link), then any generalized crossing change will not change the link. For the case of knots, Kalfagianni \cite{KalCCCFK} showed the converse holds: if a crossing change on a fibered knot yields a fibered knot that is isotopic to the original, then the crossing circle must be nugatory. 

Stallings proved if $F$ is a fiber surface, and the loop $c$ is isotopic into $F$ so that the framing on $c$ induced by $F$ agrees with that of $D$, then the image of $F$ after $\pm 1$-Dehn surgery along $c$ is a fiber surface for the resulting link \cite{StaCFKL}. This came to be known as a \emph{Stallings twist}. Yamamoto proved that twisting along an arc is a Stallings twist if and only if the arc $\alpha$ is clean and non-alternating \cite{YamSTWCBRPDHB} (see also Theorem \ref{thm:4equiv}). (Note that a crossing change is nugatory if and only if the arc $\alpha$ is fixed by the monodromy. In this case, $\alpha$ is clean and non-alternating. Since the crossing circle can be isotoped to a trivial loop in the surface $F$, this can also be considered a special case of a Stallings twist.)

We generalize Yamamoto's result and characterize exactly when twisting along an arc results in a fiber surface.
\bigskip

\begin{thm} \label{thm:twistsurface}
Suppose 
$F$ is a fiber surface, 
and $\alpha$ is a properly embedded arc in $F$.
Let $F'$ be the resulting surface of an $\varepsilon$-twist along $\alpha$ for $\varepsilon \in \set{\pm 1}$.
Then $F'$ is a 
fiber surface 
if and only if $i_{total}(\alpha) = 0$ (i.e., $\alpha$ is clean and non-alternating)
or $\alpha$ is once-unclean and alternating with $i_{\bd}(\alpha) = -\varepsilon$.
\end{thm}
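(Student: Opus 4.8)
The plan is to reduce Theorem~\ref{thm:twistsurface} directly to the surgery characterization of Theorem~\ref{thm:surgery}. First I would observe that the $\varepsilon$-twist along $\alpha$ is nothing but a Dehn surgery on the $\alpha$-loop $c=\partial D$. Since $c$ is disjoint from $F$, the surface $F$ persists as a subset of $M\rmv c$, and its image $F'$ after the twist is precisely $F$ regarded inside the surgered manifold $N(\gamma_{\mathrm{tw}})$. Because $c$ bounds the disk $D$ it is unknotted, and the $\varepsilon$-twist is the $(-\tfrac{1}{\varepsilon})$-surgery with respect to the disk (preferred-longitude) framing; as $\varepsilon=\pm1$ this is a $\pm1$-surgery on an unknot, so $N(\gamma_{\mathrm{tw}})=S^3$ and $F'$ is a Seifert surface for the resulting link $L'$. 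Consequently $F'$ is a fiber surface for $L'$ if and only if $F$ is a fiber surface in $N(\gamma_{\mathrm{tw}})$, which is exactly the situation governed by Theorem~\ref{thm:surgery}.

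Next I would identify the slope $\gamma_{\mathrm{tw}}$ in the meridian--longitude basis used there. Writing $\lambda$ for the preferred longitude and $\mu$ for the meridian, the $(-\tfrac{1}{\varepsilon})$-surgery fills the curve $-\mu+\varepsilon\lambda$, so as a rational slope $\gamma_{\mathrm{tw}}=-\tfrac{1}{\varepsilon}=-\varepsilon$ (recall $\varepsilon=\pm1$). This is a non-trivial slope, so Theorem~\ref{thm:surgery} applies and asserts that $F'$ is a fiber surface precisely when either (1) $\alpha$ is clean and $-\varepsilon=i_{\partial}(\alpha)+\tfrac{1}{n}$ for some integer $n$, or (2) $\alpha$ is once-unclean and $-\varepsilon=i_{\partial}(\alpha)$.

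It then remains to solve these two slope equations. In case (2), $i_{\partial}(\alpha)=-\varepsilon=\pm1$ forces $|i_{\partial}(\alpha)|=1$, i.e. $\alpha$ is alternating; this is exactly the once-unclean alternating arc with $i_{\partial}(\alpha)=-\varepsilon$. In case (1), rewrite the equation as $\tfrac{1}{n}=-\varepsilon-i_{\partial}(\alpha)$ and use that a clean arc has $i_{\partial}(\alpha)\in\{0,\pm1\}$. If $\alpha$ is non-alternating then $i_{\partial}(\alpha)=0$ and $n=-\varepsilon$ solves it, so clean non-alternating arcs qualify (equivalently $i_{total}(\alpha)=0$). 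If $\alpha$ is clean and alternating then $-\varepsilon-i_{\partial}(\alpha)\in\{0,\mp2\}$, and neither value equals $\tfrac{1}{n}$ for any integer $n$; hence no clean alternating arc qualifies. (The excluded subcase $i_{\partial}(\alpha)=-\varepsilon$ is precisely the Euler-characteristic--raising configuration of Theorem~\ref{thm:Eulerc2}, consistent with $F'$ then failing to be taut.) Collecting the surviving cases yields exactly the clean non-alternating arcs together with the once-unclean alternating arcs with $i_{\partial}(\alpha)=-\varepsilon$, as claimed.

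The one delicate point I anticipate is pinning down the sign conventions so that case (2) produces $i_{\partial}(\alpha)=-\varepsilon$ rather than $+\varepsilon$. This requires tracking coherently the relation $\lambda_b=\lambda+i_{\partial}(\alpha)\mu$ between the preferred longitude and the blackboard framing used in the proof of Theorem~\ref{thm:surgery}, the handedness convention defining an $\varepsilon$-twist, and the orientation convention that fixes the sign of $i_{\partial}$. Once these are aligned, the identification $\gamma_{\mathrm{tw}}=-\varepsilon$ and the accompanying case analysis are routine, and the matching of filling curves (not merely slopes) in each case confirms that no spurious solutions are introduced.
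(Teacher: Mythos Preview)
Your argument is correct. It is, however, a genuinely different route from the paper's own proof of Theorem~\ref{thm:twistsurface}. The paper instead plumbs a Hopf annulus onto $F$ near one endpoint of $\alpha$, obtaining a new fiber surface $F''$ in which cutting along $\alpha$ yields exactly $F'$; Theorem~\ref{thm:whenfiber} then says $F'$ is a fiber if and only if $\alpha$ is clean alternating or once-unclean non-alternating \emph{in $F''$}, and translating those conditions back through the extra Dehn twist (from the plumbed annulus) to conditions on $\alpha$ in $F$ recovers the statement. Your approach bypasses this plumbing trick and applies Theorem~\ref{thm:surgery} directly to the slope $\gamma=-\varepsilon$, which is precisely how the paper later proves the more general Theorem~\ref{thm:gencrossingchange}; in effect you have specialized that argument to $n=\pm1$. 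The paper's route has the virtue of depending only on the cutting-arc characterization (Theorem~\ref{thm:whenfiber}), hence avoids Ni's surgery theorem; your route is shorter, makes the sign bookkeeping transparent, and extends without modification to arbitrary $n$-twists. One small remark: your line ``so $N(\gamma_{\mathrm{tw}})=S^3$'' tacitly assumes $M=S^3$; the cleaner statement is that since $c$ bounds a disk in $M$, any $-\tfrac{1}{\varepsilon}$-surgery on $c$ returns $M$, and Theorem~\ref{thm:surgery} applies in that generality.
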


\begin{proof} Plumb a Hopf annulus along an arc parallel to the boundary of $F$, with endpoints on either side of $\alpha$, 
so that the trivial sub-disk cut off by this arc contains only one point, $p$, of $\bd \alpha$. 
The result is a new 
fibered link, 
together with its fiber, $F''$.
Observe that the monodromy of $F''$ differs from that of $F$ by exactly a Dehn twist along the core of the newly plumbed on Hopf annulus, right- or left-handed depending on the twist of the Hopf annulus.

Now, the result of cutting $F''$ along $\alpha$ is exactly $F'$. So, by Theorem \ref{thm:whenfiber}, $F'$ is a 
fiber
if and only if $\alpha$ is clean, alternating, or once-unclean, non-alternating in $F''$. The arc $\alpha$ will be clean, alternating in $F''$ exactly when $\alpha$ is clean, non-alternating in $F$ and the sign of the Hopf annulus disagrees with the sign of $i(\alpha, h(\alpha))$ at $p$ in $F$, where $h$ is the monodromy of $F$. The arc $\alpha$ will be once-unclean, non-alternating in $F''$ exactly when either $\alpha$ is clean, non-alternating in $F$ and the sign of the Hopf annulus agrees with the sign of $i(\alpha, h(\alpha))$ at $p$ in $F$, or when $\alpha$ is once-unclean, alternating in $F$, and the sign of the Hopf annulus disagrees with the sign of $i(\alpha, h(\alpha))$ at $p$ in $F$. 
\end{proof}

By Theorem \ref{thm:Eulerc}, we have Theorem \ref{thm:crossingchange} as a translation of Theorem \ref{thm:twistsurface}.

\medskip

\noindent
{\bf Theorem \ref{thm:crossingchange}.}
{\it
 Suppose a link $L'$ is obtained from a fibered link $L$ in $S^3$ 
with fiber $F$ by a crossing change, and $\chi(L')=\chi(L)$. 
Then $L'$ is a fibered link if and only if the crossing change is a Stallings twist or an $\varepsilon$-twist along an arc $\alpha$ in $F$, 
where $\alpha$ is once-unclean and alternating with $i_{\bd}(\alpha)=-\varepsilon$.}
\medskip

In fact, using Theorem \ref{thm:surgery},  
we can characterize any generalized crossing change between fibered links of the same Euler characteristic.

\medskip

\noindent
{\bf Theorem \ref{thm:gencrossingchange}.}
{\it
Suppose $L$ and $L'$ are fibered links in $S^3$ 
related by a generalized crossing change with $\chi(L) = \chi(L')$.
Then the generalized crossing change is an $n$-twist around an arc $\alpha$, and one of the following holds:
\begin{enumerate}[\rm (1)]
\item $\alpha$ is clean and non-alternating, 
\item $n=\pm2$, and $\alpha$ is clean and alternating with $i_{\partial}(\alpha)=-n/2$, or
\item $n=\pm1$, and $\alpha$ is once-unclean and alternating with $i_{\partial}(\alpha)=-n$.
\end{enumerate}}
 
 \medskip

\begin{proof}
Let $F$ be a fiber surface of $L$. 
By Theorem \ref{thm:Eulerc}, $c$ is an $\alpha$-loop for some arc $\alpha$ in $F$ and $F$ is a fiber surface after $(-\frac{1}{n})$-surgery on $c$. 
Then, by Theorem \ref{thm:surgery}, $\alpha$ is clean and
$$-\frac{1}{n}=i_{\partial}(\alpha)+\frac{1}{m}$$ for some integer $m$, or 
$\alpha$ is once-unclean and $$-\frac{1}{n}=i_{\partial}(\alpha).$$
If $\alpha$ is clean, then either $i_{\partial}(\alpha)=0$ so $\alpha$ is non-alternating, or $n=\pm2$ and $i_{\partial}(\alpha)=-n/2$.   
If $\alpha$ is once-unclean, then $n=\pm1$ and $i_{\partial}(\alpha)=-n$. 
\end{proof}

\begin{cor}\label{cor:gencrossingchange-monodromy}
If $L$ and $L'$ are related as above, and the twist is around a once-unclean, alternating arc, then the monodromy map changes by composition with $t_a^2 t_b^2 t_c^{-1}$ or $t_a^{-2} t_b^{-2} t_c$, depending not on $n$, but on $i_p(\alpha, h(\alpha))$ at the interior point of intersection between $\alpha$ and $h(\alpha)$, where $t_a$ denotes a Dehn twist about the curve $a$, and $a, b, c$ are the loops formed by resolving the intersection of $\alpha \cup h(\alpha)$ in two ways, as in \cite{NiDSKPM}.
\end{cor}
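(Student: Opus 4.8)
The plan is to realize the crossing change as a framed surgery on the $\alpha$-loop $c$ inside the product region $\overline{M\rmv n(F)}\cong F\times I$ and then read off the induced automorphism of $F$ from a local model of that surgery. By case {\rm(3)} of Theorem \ref{thm:gencrossingchange} we are in the once-unclean, alternating situation, so the twist is an ordinary crossing change ($n=\pm1$) and, by Theorem \ref{thm:surgery} together with Theorem \ref{thm:Ni}, it is the $\lambda_b$-framed surgery on $c$. By Lemma \ref{lem:1BridgeArcLoop}, $c$ may be put in $1$-bridge position with exactly one crossing, namely the single interior intersection point $p$ of $\alpha$ and $h(\alpha)$; its projection $p(c)$ is the immersed curve $\alpha\cup h(\alpha)$ with one double point, of sign $i_p(\alpha,h(\alpha))$.

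First I would localize the monodromy change. Since the surgery is supported in a neighborhood of the disk $D$, hence of $\alpha$ and its pushoff $h(\alpha)$, the resulting automorphism of $F$ is supported in a regular neighborhood $N$ of $p(c)=\alpha\cup h(\alpha)$. As $p(c)$ is a wedge of two circles, $N$ is a pair of pants, and its three boundary curves are precisely the curves obtained by resolving the double point at $p$: the oriented (Seifert) resolution yields the two disjoint lobes $a$ and $b$ (the loops formed by the two halves of $\alpha$ and $h(\alpha)$ on either side of $p$), and the disoriented resolution yields the curve $c$ encircling both, matching the curves $a,b,c$ of \cite{NiDSKPM}. The mapping class group of $N$ fixing $\partial N$ pointwise is the free abelian group generated by the three commuting boundary twists $t_a,t_b,t_c$, so the change in monodromy is necessarily of the form $t_a^{x}t_b^{y}t_c^{z}$ for some integers $x,y,z$, and it remains only to pin these down.

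The heart of the argument is to compute $(x,y,z)$. I would do this by tracking Ni's explicit re-productization of $F\times I$ after the $\lambda_b$-framed surgery on the $1$-bridge curve $c$, equivalently by the plumb-and-cut description used in the proof of Theorem \ref{thm:twistsurface}: Hopf-plumb $F$ to a fiber $F''$ (composing the monodromy with $t_\delta^{\pm1}$ along the new core $\delta$, so that $\alpha$ becomes once-unclean and \emph{non}-alternating in $F''$) and then cut $F''$ along $\alpha$, recording the monodromy change of the cut (cf.\ Corollary \ref{cor:monodromy-band}). Because the order-$\pm1$ crossing change is a single \emph{full} (rather than half) twist of the two strands of $L$ through $D$ --- the boundary strands at the two endpoints of $\alpha$ --- the two lobe curves $a,b$ should each acquire a squared twist, while the framing/self-linking of the immersed curve $p(c)$ contributes the single correction along $c$; this yields $t_a^{2}t_b^{2}t_c^{-1}$, and reversing the smoothing data (i.e.\ flipping $i_p(\alpha,h(\alpha))$) replaces it by $t_a^{-2}t_b^{-2}t_c$.

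The main obstacle is precisely this last sign-and-exponent bookkeeping: verifying that the full twist produces exponent $2$ on each lobe and exponent $-1$ on $c$ (rather than some other integer combination), and that the outcome depends only on the interior crossing sign $i_p(\alpha,h(\alpha))$ and not on $n$ --- equivalently not on $i_{\partial}(\alpha)$, which merely fixes which of $n=\pm1$ occurs. This forces one to fix careful orientation and framing conventions in Ni's local model and check them against the two resolutions. The commutativity of $t_a,t_b,t_c$ in the pair of pants is what makes the final expression unambiguous once the three exponents have been determined.
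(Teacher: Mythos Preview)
Your setup is correct and essentially reproduces the framework behind the cited result, but you are working much harder than the paper does. The paper's entire proof is a one-line citation: ``This follows from Proposition 1.4 of \cite{NiDSKPM}.'' Ni's Proposition 1.4 computes exactly the monodromy change for the $\lambda_b$-framed surgery on a $1$-bridge curve with a single crossing in $F\times I$, and the output is precisely $t_a^{2}t_b^{2}t_c^{-1}$ or $t_a^{-2}t_b^{-2}t_c$ depending on the sign of the crossing. Once you have placed $c$ in $1$-bridge position with one crossing (your first paragraph, via Lemma \ref{lem:1BridgeArcLoop} and Theorem \ref{thm:surgery}), the corollary is immediate from that proposition.

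The part of your proposal that you flag as the ``main obstacle'' --- verifying the exponents $(2,2,-1)$ and the dependence only on $i_p(\alpha,h(\alpha))$ --- is exactly the content of Ni's computation, so there is no need to redo it via the plumb-and-cut maneuver of Theorem \ref{thm:twistsurface}. Your localization to the pair-of-pants neighborhood and identification of $a,b,c$ as the two resolutions is a fine conceptual explanation of \emph{why} the answer has this shape, but as a proof it leaves the key step unresolved. Note also that your appeal to Corollary \ref{cor:monodromy-band} is a forward reference (that corollary appears in Section \ref{section:alternativeproof}) and in any case it too rests on Ni's Proposition 1.4, so the detour buys nothing.
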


\begin{proof}
This follows from Proposition 1.4 of \cite{NiDSKPM}.
\end{proof}

\section{An alternative proof of Theorem \ref{thm:whenfiber} }
\label{section:alternativeproof}

In this section we will give an alternative proof of Theorem \ref{thm:whenfiber} using Theorem \ref{thm:surgery}.   
Let $L$ be a fibered link in a manifold $M$ with fiber $F$, and let $F'$ be a surface obtained from $F$ by cutting along an arc $\alpha$. 
Let $c$ be an $\alpha$-loop. 
We consider $F$ in $N(0)$ which is obtained from $M$ by $0$-surgery on $c$. 
Theorem \ref{thm:surgery} gives the following necessary and sufficient condition for $F$ to be a fiber surface in $N(0)$.
\begin{enumerate}
\item $\alpha$ is clean and $i_{\partial}(\alpha)=\pm1$, or
\item $\alpha$ is  once-unclean and 
$i_{\partial}(\alpha)=0$. 
\end{enumerate}
This is the same condition as in Theorem \ref{thm:whenfiber}. 
Then Theorem \ref{thm:whenfiber} follows from
Lemma \ref{lem:FandF'} below.
\begin{lem}\label{lem:FandF'}
$F'$ is a fiber surface in $M$ if and only if $F$ is a fiber surface in $N(0)$.
\end{lem}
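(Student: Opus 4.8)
The plan is to compare the two complementary sutured manifolds whose triviality encodes the two fibering conditions, and to show that they differ by a single product disk decomposition. Write $X = (\overline{M \rmv n(F)}, \bd F)$, which is a trivial sutured manifold since $F$ is a fiber for $L$. Because the $\alpha$-loop $c$ is disjoint from $F$, it lies in the interior of $X$, and the $0$-surgery is supported there; hence $\overline{N(0) \rmv n(F)} = X(0)$, the manifold obtained from $X$ by $0$-surgery on $c$. Thus ``$F$ is a fiber surface in $N(0)$'' is equivalent to ``$X(0)$ is trivial,'' while ``$F'$ is a fiber surface in $M$'' is equivalent to ``$X' := (\overline{M \rmv n(F')}, \bd F')$ is trivial.'' The goal is therefore to prove that $X(0)$ is trivial if and only if $X'$ is.

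The key geometric observation is that $0$-surgery along $c = \bd D$, taken with the framing induced by $D$ (the preferred longitude, as noted above), creates a reducing sphere. After removing $n(c)$ and regluing a solid torus $V'$ whose meridian is identified with $\bd(\overline{D \rmv n(c)})$, the disk $D_0 = \overline{D \rmv n(c)}$ caps off with a meridian disk of $V'$ to a $2$-sphere $S \subset N(0)$. Reducing $N(0)$ along $S$ (cutting along $S$ and filling the two resulting spheres with balls) undoes the surgery and recovers $M$. Since $D \cap F = \alpha$ and the meridian disk of $V'$ is disjoint from $F$, we have $S \cap F = \alpha$; hence under this reduction the surface $F$, cut along $\alpha$, becomes exactly $F'$. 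In other words, the pair $(N(0), F)$ reduces along $S$ to the pair $(M, F')$.

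I would then transfer this to the complements. The sphere $S$ meets $n(F)$ in a band neighborhood $D^S_\alpha$ of $\alpha$ and meets $X(0)$ in a single properly embedded disk $S_0 = \overline{S \rmv D^S_\alpha}$. Its boundary $\bd S_0$ runs as an arc parallel to $\alpha$ in $R_+$, an arc parallel to $\alpha$ in $R_-$, and two short arcs crossing the suture annuli $\bd F \times I$ at the endpoints of $\alpha$ (which lie on the sutures $L$). Thus $\bd S_0$ meets the suture annuli in exactly two essential arcs, so $S_0$ is a \emph{product disk} for $X(0)$. Cutting $X(0)$ along $S_0$ — the restriction of the reduction above, the capping balls contributing only product collars to the complement of $n(F')$ — yields precisely $X'$; this is consistent with the Euler characteristic count $\chi(X') = \chi(F)+1 = \chi(X(0))+1$, since decomposing along a disk raises $\chi$ by one. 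Hence $X(0) \stackrel{S_0}{\leadsto} X'$ is a product decomposition.

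Finally, I would invoke the fact that a product disk decomposition preserves the product structure in both directions: decomposing a trivial sutured manifold along a product disk yields a trivial one, and conversely re-gluing a product disk to a trivial sutured manifold returns a trivial one (Gabai). Applied to $X(0) \stackrel{S_0}{\leadsto} X'$, this gives that $X(0)$ is trivial if and only if $X'$ is, which is the assertion of the lemma. The main obstacle I anticipate is the careful verification that $S_0$ is a genuine product disk — in particular the behavior of $\bd S_0$ near the endpoints of $\alpha$ on the sutures — together with the precise identification of the sutured manifold obtained by decomposing $X(0)$ along $S_0$ with $X'$ (accounting for the capping balls); the reversibility of product decompositions within trivial sutured manifolds is standard but should be cited explicitly.
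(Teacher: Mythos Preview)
Your proposal is correct and follows essentially the same route as the paper: the paper builds the very same product disk (called $D'$ there, with $\partial D' = \partial(\alpha\times I)$, obtained as the annulus from $\beta$ to the longitude $\lambda$ capped by the meridian disk of the surgery solid torus) and identifies the product decomposition of $(\overline{N(0)\rmv n(F)},\partial F)$ along it with $(\overline{M\rmv n(F')},\partial F')$, then invokes invariance of triviality under product decomposition. The only cosmetic difference is that the paper verifies this identification via a $2$-handle attachment along $\beta$ rather than via your reducing-sphere-plus-capping-balls picture; both descriptions encode the same geometry.
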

\begin{rem} A statement analogous to Lemma \ref{lem:FandF'} holds replacing fiber surface with taut surface, since tautness is also invariant under product decomposition and reverse operations.
\end{rem}
\begin{proof}
The idea of this proof is based on {\it Proof of Claim 2} in \cite{SchaThoLGCM}.
Take a small neighborhood $n(c)$ of $c$ and a small product neighborhood $n(F)=F\times I$ of $F$ so that $n(c)$ and $n(F)$ are disjoint.
Let $D$ be a disk $\alpha\times I$ in $n(F)$ and let $\beta$ be a loop $\partial D$ in $\partial (n(F))$. 
By the definition of $\alpha$-loop, there exists an annulus in $\overline{M\rmv (n(c)\cup n(F))}$ with boundary components $\beta$ and a longitude $\lambda$ on $\partial (n(c))$. 
Then $\beta$ bounds a disk $D'$ in $\overline{N(0)\rmv n(F)}$, the union of the annulus and a meridional disk of the solid torus filled into $N(0)$. 
Since $\beta$ intersects the suture $\partial F$ at two points, $D'$ is a product disk for the sutured manifold $(\overline{N(0)\rmv n(F)},\partial F)$.
A product neighborhood  $n(F')=F'\times I$ is obtained from $n(F)$ by removing a neighborhood of $D$, and so $\overline{M\rmv n(F')}$ is obtained from $\overline{M\rmv n(F)}$ by attaching a $2$-handle along $\beta$. 
Attaching to $\overline{M\rmv n(F)}$ a $2$-handle along $\beta$ is equivalent to deleting from $\overline{N(0)\rmv n(F)}$ a neighborhood of $D'$. 
Then a sutured manifold $(\overline{M\rmv n(F')},\partial F')$ is obtained from $(\overline{N(0)\rmv n(F)},\partial F)$ by decomposing along $D'$.  
$$ (\overline{N(0)\rmv n(F)}, \partial F) \stackrel{D'}{\leadsto} (\overline{M\rmv n(F')}, \partial F').$$  
Since the triviality of a sutured manifold is invariant under product decomposition and reverse operations, 
$F'$ is a fiber surface in $M$ if and only if $F$ is a fiber surface in $N(0)$.
\end{proof}

\begin{cor}\label{cor:monodromy-band}
Suppose $F$ and $F'$ are related as above, and $\alpha$ is once-unclean, non-alternating. 
Let $h$ and $h'$ be monodromies of $F$ and $F'$ respectively.
Then $(t_a^2 t_b^2 t_c^{-1}  h)|_{F'}=h'$ or $(t_a^{-2} t_b^{-2} t_c  h)|_{F'}=h'$, depending on $i_p(\alpha, h(\alpha))$ at the interior point of intersection between $\alpha$ and $h(\alpha)$, where $t_a$ denotes a Dehn twist about the curve $a$, and $a, b, c$ are the loops formed by resolving the intersection of $\alpha \cup h(\alpha)$ in two ways, as in \cite{NiDSKPM}.
\end{cor}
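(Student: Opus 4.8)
The plan is to derive this from the machinery of Section~\ref{section:alternativeproof}, running the argument of Corollary~\ref{cor:gencrossingchange-monodromy} through the equivalence of Lemma~\ref{lem:FandF'}. Since $\alpha$ is once-unclean and non-alternating, $i_{\partial}(\alpha)=0$, so Theorem~\ref{thm:surgery}(2) applies with surgery slope $\gamma=0$; because $\lambda_b=\lambda+i_{\partial}(\alpha)\cdot\mu=\lambda$, this is exactly the $\gamma=\lambda_b$ case of Theorem~\ref{thm:Ni}, in which the minimal projection of the $\alpha$-loop $c$ carries a single crossing, namely the crossing recorded by the lone interior point of $\alpha\cap h(\alpha)$. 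Thus $F$ is a fiber surface in $N(0)$, and we are precisely in the setting to which Ni's monodromy computation applies.

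First I would invoke Proposition~1.4 of \cite{NiDSKPM}, exactly as in the proof of Corollary~\ref{cor:gencrossingchange-monodromy}: the $\lambda_b$-surgery on the one-crossing loop $c$ changes the first-return map of $F$ from $h$ to $t_a^2 t_b^2 t_c^{-1}h$ or to $t_a^{-2}t_b^{-2}t_c h$, where $a,b,c$ are the loops obtained by resolving the interior intersection of $\alpha\cup h(\alpha)$ in the two ways, and where the sign is governed by $i_p(\alpha,h(\alpha))$ at that point. This yields the monodromy of the fibration of $F$ in $N(0)$ as an explicit automorphism of $F$.

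Finally I would transport this back across Lemma~\ref{lem:FandF'}. That lemma realizes the equivalence ``$F$ is a fiber in $N(0)$'' $\Leftrightarrow$ ``$F'$ is a fiber in $M$'' through the product decomposition $(\overline{N(0)\rmv n(F)},\partial F)\stackrel{D'}{\leadsto}(\overline{M\rmv n(F')},\partial F')$ along the product disk $D'$. On the level of mapping tori this decomposition is exactly the operation of cutting the fiber $F$ along $\alpha$ and passing to the induced first-return map on $F'$, so the monodromy $h'$ of $F'$ in $M$ is the restriction to $F'$ of the $N(0)$-monodromy computed above. Combining the two steps gives $h'=(t_a^2 t_b^2 t_c^{-1}h)|_{F'}$ or $h'=(t_a^{-2}t_b^{-2}t_c h)|_{F'}$, as claimed.

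The step I expect to be the main obstacle is this last identification: making precise that decomposing the complementary sutured manifold along $D'$ corresponds, fiberwise, to cutting $F$ along $\alpha$ and restricting the monodromy, and in particular checking that the curves $a,b,c$ produced by Ni's resolution may be isotoped off $\alpha$ so that $(\,\cdot\,)|_{F'}$ is genuinely well-defined on the cut surface. Concretely, one must verify that the product structure used in Lemma~\ref{lem:FandF'} is compatible with the one in which Ni's $a,b,c$ live, so that the factor $t_a^{\pm2}t_b^{\pm2}t_c^{\mp1}$ survives the restriction unchanged rather than being modified by the cut.
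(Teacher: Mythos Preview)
Your proposal is correct and follows essentially the same route as the paper: invoke Ni's Proposition~1.4 to compute the monodromy $h_0$ of $F$ in $N(0)$, then use the product decomposition of Lemma~\ref{lem:FandF'} to identify $h_0|_{F'}$ with $h'$. The paper resolves your anticipated obstacle in one line: since $D'$ is the product disk with $\partial D'\cap R_+=\alpha$ and $\partial D'\cap R_-=\alpha$, the very existence of $D'$ as a product disk in the trivial sutured manifold $(\overline{N(0)\rmv n(F)},\partial F)$ forces $h_0(\alpha)=\alpha$, so the restriction $h_0|_{F'}$ is automatically well-defined and equals $h'$.
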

\begin{rem}
Theorem \ref{thm:gHopfband} tells us that $F$ is a generalized Hopf banding of $F'$. 
The loops $a,b,c$ in Corollary \ref{cor:monodromy-band} for a generalized Hopf banding are depicted in Figure \ref{fig:monodromygHopf}.
For an arc $\ell$ in $F'$ with a single self-intersection point, there are two generalized Hopf bandings depending on which part of the band is in the higher position at the place of overlap. 
The two monodromies in Corollary \ref{cor:monodromy-band} correspond to these two surfaces.  
If the self-intersection point of $\ell$ is removable in $F'$, then the genralized Hopf banding is a Hopf banding. 
In that case,  $b$ is trivial in $F'$ (and so in $F$), $a$ and $c$ are isotopic to each other in $F$, 
and so $t_a^2 t_b^2 t_c^{-1}=t_a, t_a^{-2} t_b^{-2} t_c =t_a^{-1}$, is a Dehn twist along the core of the Hopf annulus.
\end{rem}
\begin{figure}[h]
\begin{center}
\includegraphics[width=6cm]{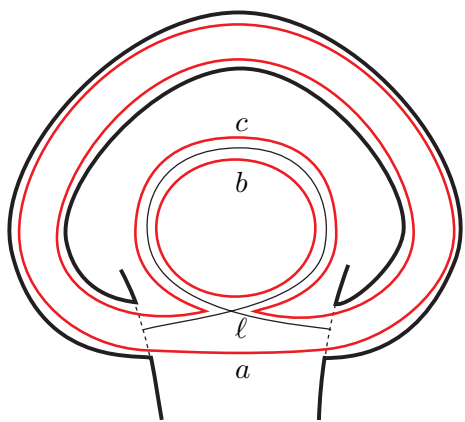}

\begin{picture}(400,0)(0,0)
\put(200,45){$\ell$}
\put(200,123){$c$}
\put(200,100){$b$}
\put(200,30){$a$}
\end{picture}
\caption{Loops of Dehn twists for a generalized Hopf banding.}
\label{fig:monodromygHopf}
\end{center}
\end{figure}
\begin{proof}
Let $h_0$ be the monodromy of $F$ in $N(0)$, i.e. $(F\times [0,1]) / h_0$ is homeomorphic to $\overline {N(0)\rmv n(F)}$. 
By Proposition 1.4 of \cite{NiDSKPM}, then, $t_a^2 t_b^2 t_c^{-1}  h=h_0$ or $t_a^{-2} t_b^{-2} t_c  h=h_0$. 
In the proof of Lemma \ref{lem:FandF'}, recall that the product disk $D'$ of $(\overline{N(0)\rmv n(F)},\partial F)$ is the boundary of a disk $\alpha\times I$, and $(\overline{M\rmv n(F')},\partial F')$ is obtained from $(\overline{N(0)\rmv n(F)},\partial F)$ by decomposing along $D'$. 
This implies that $h_0(\alpha)=\alpha$ and $h_0|_{F'}=h'$.
\end{proof}

\bibliographystyle{plain} 
\bibliography{BandSurgeriesAndCrossingChangesBetweenFiberedLinks.bib}

\end{document}